 \newif\ifHideFoot
\numberwithin{equation}{section}
\newtheorem{teo}{Theorem}[section]
\newtheorem{pro}[teo]{Proposition}
\newtheorem{lem}[teo]{Lemma}
\newtheorem{cor}[teo]{Corollary}
\newtheorem{teoalpha}{Theorem}
\theoremstyle{definition}
\newtheorem{exa}[teo]{Example}
\theoremstyle{remark}
\newtheorem{rem}[teo]{Remark}
\newcommand{\Yano}[1]{}
\newcommand{\Jeff}[1]{}
\newcommand{\Jonathan}[1]{}
\newcommand{\marg}[1]{\normalsize{{
\color{red}\footnote{{\color{blue}#1}}}{\marginpar[\vskip
-.25cm{\color{red}\hfill\tiny\thefootnote$\implies$}]{\vskip
-.2cm{\color{red}$\impliedby$\tiny\thefootnote}}}}}
\newcommand{\Yano}[1]{\marg{(Yano) #1}}
\newcommand{\Jeff}[1]{\marg{(Jeff) #1}}
\newcommand{\Jonathan}[1]{\marg{(Jonathan) #1}}
\global\let\hom\undefined
\DeclareMathOperator{\hom}{Hom}
\def\ext{\operatorname{Ext}}
\def\et{{\rm \acute et}}
\def\mmu{{\pmb\mu}}
\def\aalpha{{\pmb\alpha}}
\def\proj{{\mathbb P}}
\def\ff{{\mathbb F}}
\def\gp{{\mathbb G}}
\def\integ{{\mathbb Z}}
\def\iso{\cong}
\renewcommand{\bar}[1]{{\overline{#1}}}
\DeclareMathOperator{\Hom}{Hom}
\DeclareMathOperator{\spec}{Spec}
\newcommand{\powser}[1]{[\![#1]\!]}
\def\ab{\mathrm{ab}}
\def\et{\mathrm{et}}
\newcommand{\st}[1]{\left\{#1\right\}}
\title{Images of abelian schemes}
\author{Jeffrey D. Achter}
\address{Colorado State University, Department of Mathematics,
Fort Collins, CO 80523,
USA}
\email{j.achter@colostate.edu}
\author{Sebastian Casalaina-Martin}
\address{University of Colorado, Department of Mathematics, 
Boulder, CO 80309, USA }
\email{casa@math.colorado.edu}
\author{Jonathan Wise}
\address{University of Colorado, Department of Mathematics, 
Boulder, CO 80309, USA }
\email{jonathan.wise@math.colorado.edu}
\thanks{
  The authors were partially supported by respective Simons Foundation grants 637075, 581058, and 636210.}
\date{\today}
\begin{document}

\maketitle

\begin{abstract}
  We provide some conditions for the image of a morphism of abelian schemes to again be an abelian scheme.  For context, in characteristic 0, the image is always an abelian scheme; in mixed and positive characteristic the image can fail to be an abelian scheme, and so it is in this setting that the conditions we provide are pertinent.     
\end{abstract}

Let $S$ be a connected, locally Noetherian scheme, and let $X$ and $Y$
be abelian schemes over $S$.  A homomorphism of abelian schemes $X \to Y$
is an $S$-morphism of schemes which carries the identity section of
$X$ to that of $Y$.  It has long been known that, thanks to rigidity,
such a morphism necessarily respects the group structure
\cite[Prop.~6.1]{GIT}.  We are interested in when the 
\emph{image} of such a morphism is again an abelian scheme:

\begin{teoalpha}[Images of abelian schemes]\label{T:ImIsAbSch}
	Let $S$ be a locally noetherian scheme.
  Let $f\colon X \to Y$ be a homomorphism of abelian schemes over $S$.
  The following are equivalent:
  \begin{enumerate}[label=(\alph*)]
 \item \label{T:ImIsAbSch1}   $\ker(f)$ is a flat sub-$S$-group scheme of $X/S$, 
 \item \label{T:ImIsAbSch2} the scheme theoretic image $f(X)\subseteq Y$ is a sub-abelian scheme over $S$, and  
 \item \label{T:ImIsAbSch3}  $f$
  factors as
 \begin{equation}\label{E:ImIsAbSch}
\xymatrix{
X\ar@{->>}[r]_\pi \ar@/^1pc/[rr]^f&Z \ar@{^(->}[r]_\iota&Y
}
\end{equation}
where $X/\ker (f)\cong Z\subseteq Y$ is a sub-$S$-group scheme,
$\pi$ is a flat surjective homomorphism, and $\iota$ is the natural
inclusion.  Moreover, $Z=f(X)$, and the factorization \eqref{E:ImIsAbSch} is stable under base change.
\end{enumerate}

Moreover, if $S$ is of characteristic $0$, or $S=\operatorname{Spec}K$ for a field $K$, 
or  if every local ring of $S$ is either of characteristic zero or is 
a discrete valuation ring  of mixed characteristic with valuation $\nu$, residue field of characteristic $p>0$, and ramification index $e:=\nu(p)$ such that $e<p-1$, 
then \ref{T:ImIsAbSch1},  \ref{T:ImIsAbSch2}, and  \ref{T:ImIsAbSch3} hold.
\end{teoalpha}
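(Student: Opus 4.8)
The plan is to use the equivalence of \ref{T:ImIsAbSch1}, \ref{T:ImIsAbSch2} and \ref{T:ImIsAbSch3} proved above, so that it suffices to verify \ref{T:ImIsAbSch1}, namely that $\ker(f)$ is flat over $S$. Flatness is local on the base, and by the local criterion we may pass to the (completed) local rings of $S$; thus it is enough to treat $S=\operatorname{Spec}R$ where $R$ is one of the local rings permitted by the hypotheses, i.e.\ a field, a characteristic-zero local ring, or a mixed-characteristic discrete valuation ring with $e<p-1$. If $R$ is a field there is nothing to prove. In the remaining cases the useful reformulation is that, for the closed subgroup scheme $\ker(f)\subseteq X$, flatness amounts to the ``size'' of the special fibre not jumping under specialization: the dimension of the identity component together with the order of the finite part of $\ker(f)_s$ should equal that of $\ker(f)_\eta$, equivalently $\ker(f)$ should coincide with the scheme-theoretic closure of its generic fibre.

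The dimension of the identity component is automatically constant, and this part works over an arbitrary one-dimensional base. For any prime $\ell$ invertible on $S$ the group schemes $X[\ell^n]$ and $Y[\ell^n]$ are finite \'etale, so $f$ induces a morphism $T_\ell(f)\colon T_\ell(X)\to T_\ell(Y)$ of lisse $\mathbb{Z}_\ell$-sheaves on the connected base $\operatorname{Spec}R$; a morphism of lisse sheaves on a connected base has locally constant rank, and at a geometric point $\bar t$ this rank equals $2\dim f(X_{\bar t})$. Hence $\dim f(X_\eta)=\dim f(X_s)$ and therefore $\dim\ker(f_\eta)=\dim\ker(f_s)$. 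When $R$ has characteristic zero this already finishes the argument: every fibre of $\ker(f)$ is smooth by Cartier's theorem, the map of first integral homology local systems (over $\mathbb{C}$, and in general by the Lefschetz principle) has locally constant rank, so $\operatorname{Lie}(f)$ has locally free cokernel, and a homomorphism of group schemes in characteristic zero with locally free $\operatorname{Lie}$-cokernel and smooth fibres is smooth, hence $\ker(f)$ is smooth and a fortiori flat.

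In mixed characteristic the identity component is still controlled by the $\ell\neq p$ computation above, but the order of the \emph{finite part} of $\ker(f)$, and specifically its $p$-primary part, can a priori jump: the $p$-power torsion of $\ker(f_\eta)$ may fail to close up to the full $p$-power torsion of $\ker(f_s)$, and this is precisely the mechanism by which the image can fail to be an abelian scheme in general. Controlling this is the heart of the matter and the step I expect to be the main obstacle. Here I would invoke the theory of finite flat and $p$-divisible group schemes over a mixed-characteristic discrete valuation ring: the hypothesis $e<p-1$ places us in the Raynaud--Fontaine range, in which the generic-fibre functor on finite flat group schemes annihilated by a power of $p$ is fully faithful and schematic closure yields the unique finite flat model of a prescribed generic fibre. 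Applying this to the $p$-divisible group $\ker(f)[p^\infty]$, equivalently to the finite flat closures of the $p$-power torsion, the bound $e<p-1$ forbids $\ker(f)_s$ from acquiring any extra $p$-power torsion beyond the closure of $\ker(f_\eta)$; together with the $\ell\neq p$ and dimension statements this shows $\ker(f)_s$ has exactly the expected size, so $\ker(f)$ equals the flat closure of its generic fibre and is flat.

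Finally, since the formation of $\ker(f)$ and of the factorization $\pi,\iota$ is stable under the flat base changes used in the reduction to local rings, the conclusion on each $\operatorname{Spec}R$ descends to the original $S$, and the three conditions hold.
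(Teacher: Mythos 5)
Your outline handles the equivalence of the three conditions and the field case exactly as the paper does, and your treatment of the mixed-characteristic DVR case correctly identifies the key input (Raynaud's full faithfulness of the generic-fibre functor on finite flat group schemes when $e<p-1$, equivalently uniqueness of prolongations); this is essentially the same tool the paper uses, packaged there as the N\'eron--Ogg--Shafarevich criterion to extend the generic image $Z_\eta$ to an abelian scheme $Z$ over the DVR followed by \cite[Thm.~4(i), \S 7.5]{BLR} to see that $Z\to Y$ is a closed immersion. Your argument over a \emph{reduced} characteristic-zero base (constancy of the rank of $T_\ell(f)$ on a connected base, hence of the fibre dimension of the image) also parallels what the paper does, though you would still need to control the finite \'etale part of $\ker(f)$, not just its identity component, and you would need to actually prove the asserted lemma that locally free $\operatorname{Lie}$-cokernel plus smooth fibres forces the kernel to be flat.

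The genuine gap is the non-reduced characteristic-zero case, which the hypotheses explicitly allow (e.g.\ $S=\operatorname{Spec}\rat[\epsilon]/(\epsilon^2)$, or more generally any characteristic-zero local ring). Every tool you invoke there --- local systems of integral homology, the Lefschetz principle, ``locally constant rank'' --- is a statement about geometric fibres, and over an Artinian local ring there is only one fibre, so these statements are vacuous while flatness of $\ker(f)$ is not: the cokernel of $\operatorname{Lie}(f)$ is a module over a non-reduced ring and its freeness cannot be detected by the rank of its reduction modulo the maximal ideal. This is precisely why the paper cannot argue fibrewise here and instead devotes \S\ref{S:KerMorph}--\S\ref{S:4} to it: one lifts the kernel step by step along small extensions using Illusie's deformation theory of commutative group schemes, using the vanishing $\operatorname{Ext}^2_k(G_0,\gp_a)=0$ for smooth proper commutative group schemes (\Cref{P:2}) to produce a flat lift $G''$, the hypothesis $\Hom(G_0,\gp_a)=0$ (automatic in characteristic zero by \Cref{P:5}) to force the composite $G''\to X'\to Y'$ to be zero, and Nakayama to conclude $G''=\ker(f')$. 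Your proposal contains no substitute for this infinitesimal argument, so as written it does not prove the theorem over non-reduced bases of characteristic zero.
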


Certain cases of \Cref{T:ImIsAbSch} seem to be well-known in the
literature, in particular, the case of abelian varieties (in other words,
the case of the theorem where $S$ is the spectrum of
a field).
Since we will use the special case of \Cref{T:ImIsAbSch} over a field
in various places, we will review a short
proof of that case below (see \Cref{P:FlatSuff}).

By way of a modest caution, we note that certain plausible-sounding
extensions of \Cref{T:ImIsAbSch} fail.  
For instance, some conditions are needed on $S$, as there are morphisms  of abelian schemes whose images are not abelian schemes (see \Cref{E:Serre} and \Cref{E:charp}).  
In these examples, the image is a flat proper group scheme (see \Cref{R:FlatDVR}) that is not an abelian scheme; in other words, while it is sufficient that the kernel of a homomorphism  be flat to imply that the image is an abelian scheme (\Cref{T:ImIsAbSch}\ref{T:ImIsAbSch1}), it is not sufficient that the image of the homomorphism be flat (cf.~\Cref{T:ImIsAbSchArt} over Artinian local rings for contrast).
In addition, while \Cref{T:ImIsAbSch}\ref{T:ImIsAbSch1} states that if the image of a morphism of abelian schemes is an abelian scheme, then the image is stable under base change, in \Cref{R:BC} and \Cref{R:ImNotStable} we give examples of morphisms of abelian schemes where the image is not stable under base change.
Finally, the \emph{kernel} of a morphism of abelian schemes need not be an abelian scheme.  Indeed, 
even in the case of abelian varieties, the kernel of a homomorphism of abelian varieties may be disconnected,  and in positive characteristic may be non-reduced.

The following  criterion for the image of a homomorphism of an abelian scheme to be an abelian scheme turns out to be useful for us elsewhere:

                  \begin{teoalpha}\label{T:NecSuff}
Let $f:X\to Y$ be a homomorphism of abelian schemes over $S$.  

\begin{enumerate}[label=(\alph*)]
\item \label{T:NecSuff1} The schematic image $f(X)$ is an abelian scheme  if and only if  for 
every  Artinian local ring $(R,\mathfrak m)$ with algebraically  closed residue field and every morphism $S':= \operatorname{Spec} R\to S$, the 
 base change $f_{S'}:X_{S'}\to Y_{S'}$ has the property that the scheme theoretic image 
 $f_{S'}(X_{S'})\subseteq Y_{S'}$ contains an abelian subscheme 
of the same dimension as $f_{S'}(X_{S'})$.

 \item \label{T:NecSuff2} If $S$ is reduced,
 then the  schematic image $f(X)$ is an abelian scheme  
  if and only if  for 
every  DVR $R$ and  every morphism $S':= \operatorname{Spec} R\to S$, the 
 base change $f_{S'}:X_{S'}\to Y_{S'}$ has the property that the scheme theoretic image $f_{S'}(X_{S'})\subseteq Y_{S'}$ contains an abelian subscheme whose generic fiber is of the same dimension as the generic fiber of $f_{S'}(X_{S'})$.   
\end{enumerate}
\end{teoalpha}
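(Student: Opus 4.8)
The plan is to derive both parts from \Cref{T:ImIsAbSch}: since that theorem identifies ``$f(X)$ is an abelian scheme'' with ``$\ker(f)$ is $S$-flat,'' and since the formation of $K:=\ker(f)=X\times_Y S$ commutes with arbitrary base change while flatness is local on the base, the whole statement reduces to a flatness criterion for $K$, a closed (hence proper) subgroup scheme of $X$. The two forward implications are then immediate: if $f(X)$ is an abelian scheme, the base-change stability asserted in \Cref{T:ImIsAbSch} gives $f_{S'}(X_{S'})=f(X)_{S'}$ for every $S'\to S$, which is itself an abelian scheme and so trivially contains an abelian subscheme of the same dimension (over an Artinian base in \ref{T:NecSuff1}, and with the same generic-fibre dimension over a DVR in \ref{T:NecSuff2}). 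It remains to prove the two reverse implications, i.e.\ that the respective fibrewise hypotheses force $K$ to be $S$-flat.

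For \ref{T:NecSuff1} I would verify flatness of $K$ by the infinitesimal criterion. Flatness of $K$ over $S$ is local at each $s\in S$, may be checked after the faithfully flat extension $\mathcal O_{S,s}\to\widehat{\mathcal O}_{S,s}$ and then to a Noetherian local ring $A'$ with algebraically closed residue field, and over such an $A'$ is equivalent to flatness of $K$ over every Artinian quotient $A'/\mathfrak m'^{\,n}$. Each $A'/\mathfrak m'^{\,n}$ is an Artinian local ring with algebraically closed residue field mapping to $S$, so the hypothesis applies, and it suffices to prove: over an Artinian local ring $(R,\mathfrak m)$ with residue field $k$, if $W:=f_R(X_R)$ contains an abelian subscheme $A$ with $\dim A=\dim W$, then $W$ is an abelian scheme (equivalently, by \Cref{T:ImIsAbSch} over $\operatorname{Spec}R$, $\ker(f_R)$ is $R$-flat). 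The crux is that, $\mathfrak m$ being nilpotent, $W$ and its special fibre $W_k$ share the underlying space of $B:=f_k(X_k)$, which by \Cref{P:FlatSuff} is an abelian subvariety of $Y_k$ of dimension $d$; hence $\dim W=d$, the abelian scheme $A$ has $\dim A_k=d$, and $A_k\subseteq W_k$ reduced of full support forces $A_k=(W_k)_{\mathrm{red}}=B$. Thus $f_k$ factors through the subgroup $A_k\subseteq Y_k$, so the composite $g\colon X_R\xrightarrow{f_R}Y_R\to Q:=Y_R/A$ of homomorphisms of abelian schemes satisfies $g_k=0$; by rigidity the reduction map $\Hom_R(X_R,Q)\to\Hom_k(X_k,Q_k)$ is injective, whence $g=0$ and $f_R$ factors through $A=\ker(Y_R\to Q)$. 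Therefore $W\subseteq A$, and with $A\subseteq W$ we get $W=A$, an abelian scheme. Assembling over all $n$ and $s$ gives flatness of $K$ over $S$.

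For \ref{T:NecSuff2}, reducedness of $S$ lets me detect flatness by DVRs: since $K$ is proper over $S$, after embedding $X$ into projective space over $S$ locally, $K$ is $S$-flat if and only if the Hilbert polynomial of its fibres is locally constant, and over a reduced Noetherian base this local constancy may be tested across specializations by pulling back along DVRs $\operatorname{Spec}R\to S$. Thus it suffices to show, for a DVR $R$ with generic point $\eta$, that the hypothesis forces $\ker(f_R)$ to be $R$-flat. Here $X_R$ is integral with dense generic fibre, and $W:=f_R(X_R)$ has generic fibre $W_\eta=f_\eta(X_\eta)$, an abelian variety of dimension equal to $\dim A_\eta$ by hypothesis; as $A_\eta\subseteq W_\eta$ are abelian varieties of equal dimension, $A_\eta=W_\eta$, so $f_\eta$ factors through $A_\eta$. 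Consequently $f_R^{-1}(A)$ is a closed subscheme of the integral scheme $X_R$ containing the dense generic fibre $X_\eta$, hence equals $X_R$; so $f_R$ factors through $A$, giving $W=A$, an abelian scheme, and therefore $R$-flatness of $\ker(f_R)$. Assembling over all such $R$ yields flatness of $K$ over the reduced base $S$.

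The main obstacle is the Artinian case of the reverse direction. Over a DVR the integrality of $X_R$ and the density of the generic fibre promote a fibrewise factorization to a factorization over the base essentially for free, whereas over an Artinian local ring the total space is ``thick'' and $f_R^{-1}(A)$ can have full special fibre without equalling $X_R$. The device that repairs this is the passage to the quotient $Y_R/A$ by the subgroup $A$, together with the rigidity of abelian schemes, which upgrades the fibrewise vanishing $g_k=0$ to $g=0$. Getting the two flatness reductions precisely right---the infinitesimal criterion with an algebraically closed residue field in \ref{T:NecSuff1}, and the DVR detection of flatness over a reduced base in \ref{T:NecSuff2}---is the remaining technical point.
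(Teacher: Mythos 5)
Your proof is correct, and in the Artinian case it takes a genuinely different route from the paper's. Both arguments begin identically: reduce via \Cref{P:FlatSuff} to showing $\ker(f)$ is flat, localize/complete to an Artinian local ring with algebraically closed residue field (resp.\ a DVR), and identify the special (resp.\ generic) fibre of the given abelian subscheme $A$ with the reduced image $f_k(X_k)$ (resp.\ with $f_\eta(X_\eta)$) by the dimension count. Where you diverge is in promoting the fibrewise factorization to a factorization over the Artinian base: the paper does this by exploiting the schematic density $X=\overline{\bigcup X[\ell^n]}$ of the prime-to-$p$ torsion, the fact that over an Artinian local ring with algebraically closed residue field each $X[\ell^n]$ and $Y[\ell^n]$ splits as a disjoint union of copies of the base, and the compatibility of schematic images with closures; you instead pass to the quotient abelian scheme $Q=Y_R/A$ and apply the rigidity lemma to conclude that $g=(Y_R\to Q)\circ f_R$ vanishes because $g_k=0$, so $f_R$ lands in $\ker(Y_R\to Q)=A$. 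Your device is arguably cleaner and more conceptual (it isolates rigidity as the reason the Artinian case works), at the cost of invoking representability and properness of the quotient $Y_R/A$ -- which the paper does use elsewhere (\Cref{T:quot}, \Cref{P:Serre}), so nothing new is really needed. In the DVR case your argument that $f_R^{-1}(A)$ is a closed subscheme of the integral scheme $X_R$ containing the dense generic fibre, hence all of $X_R$, is essentially equivalent to the paper's extension of the rational map $X\dashrightarrow Z$ via \cite[Cor.~6, \S 8.4]{BLR}, and is if anything more elementary.

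One step you should repair: your justification of the reduction to DVRs in \ref{T:NecSuff2} via constancy of Hilbert polynomials presupposes that $X$ (hence $K$) embeds Zariski-locally into projective space over $S$, and abelian schemes over non-normal (even reduced) bases need not be locally projective, so this does not go through as written. The reduction itself is correct, but the right tool is the valuative criterion for flatness over a reduced Noetherian base (as in \Cref{L:R2DVR}), which needs no projectivity and applies directly to the proper $S$-scheme $\ker(f)$.
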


There is also the following variation on the theme: 

\begin{teoalpha}
  \label{T:containsabenough}
  Let $f:X \to Y$ be a homomorphism of abelian schemes over $S$, with $S$ connected.
  \begin{enumerate}[label=(\alph*)]
  \item   \label{T:containsabenougha} The schematic image $f(X)$ has constant relative dimension over $S$.
  \item   \label{T:containsabenoughb}The image $f(X)$ is an abelian scheme if and only if it contains an abelian scheme of the same relative dimension.
  \end{enumerate}
\end{teoalpha}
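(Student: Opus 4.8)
The plan is to prove (a) first, since its proof also produces the topological identification that drives (b). Write $W = f(X) \subseteq Y$ for the scheme-theoretic image. Because $X/S$ is proper and $Y/S$ is separated, $f$ is proper, so $X \to W$ is surjective; surjectivity is stable under base change (for $s \in S$ one has $X_s = X \times_W W_s$), so each $X_s \to W_s$ is surjective and hence $|W_s| = |f_s(X_s)|$. Now $f_s \colon X_s \to Y_s$ is a homomorphism of abelian varieties over $\kappa(s)$, so its image is an abelian subvariety of dimension $\dim X_s - \dim \ker f_s = g - \dim \ker f_s$, where $g$ is the relative dimension of $X/S$ (this is the field case, \Cref{P:FlatSuff}). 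Thus $\dim W_s = g - \dim \ker f_s$.

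Next I would establish semicontinuity in both directions. On one hand, $W \to S$ is proper, so $s \mapsto \dim W_s$ is upper semicontinuous: the locus $\{w : \dim_w W_{f(w)} \ge n\}$ is closed in $W$ by upper semicontinuity of fibre dimension for finite-type morphisms, and its image in $S$ is closed by properness. On the other hand, applying the same upper-semicontinuity statement to the finite-type morphism $\ker f \to S$ and restricting along the identity section $S \to \ker f$ (at which the local fibre dimension equals $\dim \ker f_s$, since $\ker f_s$ is a group scheme) shows that $s \mapsto \dim \ker f_s$ is upper semicontinuous; hence $s \mapsto g - \dim \ker f_s = \dim W_s$ is lower semicontinuous. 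Being both, $\dim W_s$ is locally constant, and constant because $S$ is connected. This proves (a) and records that $\dim f(X)_s = \dim f_s(X_s) = D$ for a single integer $D$.

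For (b) the forward direction is immediate, since $W$ contains itself. For the converse, suppose $A \subseteq W$ is an abelian subscheme with $\dim A/S = D$. Since $|W_s| = |f_s(X_s)|$ is irreducible of dimension $D$ and $A_s \subseteq W_s$ is closed, irreducible, and of the same dimension $D$, I get $|A_s| = |W_s|$ for every $s$. I would then form the quotient abelian scheme $Q := Y/A$ and set $h \colon X \xrightarrow{f} Y \to Q$. On each fibre, $f_s(X_s) = |A_s|$ maps to the identity of $Q_s$, and since $X_s$ is reduced and connected the homomorphism $h_s$ is zero. The crux is to upgrade this fibrewise vanishing to $h = 0$: this is exactly where the possible non-reducedness of $S$ (hence of $W$) is felt, and where I would invoke rigidity of homomorphisms of abelian schemes \cite{GIT}, in the form that $\underline{\Hom}_S(X, Q)$ is unramified over $S$, so the two sections $h$ and $0$, agreeing at every point of $S$, coincide (their equalizer is open by unramifiedness, closed by separatedness, and surjective, hence all of $S$). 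Then $f$ factors through $A = \ker(Y \to Q)$, forcing $W = f(X) \subseteq A$; combined with $A \subseteq W$ this yields $W = A$, an abelian scheme.

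The main obstacle is this final rigidity step. Over a reduced base it can be bypassed entirely: the scheme-theoretic image of the reduced scheme $X$ is reduced, so $W = W_{\mathrm{red}} = A$ directly from $|A_s| = |W_s|$. The real content therefore lies in the infinitesimal directions, and I would arrange the argument so that the unramifiedness of the $\Hom$-scheme does all the work there. Along the way I would also verify the two standard inputs that $A$ is a closed subgroup scheme of $Y$ (so that "abelian subscheme" is unambiguous) and that the quotient $Y/A$ exists and is an abelian scheme.
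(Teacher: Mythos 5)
Your proof of part \ref{T:containsabenougha} is essentially the paper's: both arguments rest on the identity $\dim\ker(f_s)+\dim f_s(X_s)=\dim X_s$, the equality of supports $|f_s(X_s)|=|f(X)_s|$, and upper semicontinuity of fibre dimension applied to the two proper $S$-schemes $\ker(f)$ and $f(X)$, forcing both dimensions to be locally constant. For part \ref{T:containsabenoughb}, however, you take a genuinely different route. The paper deduces \ref{T:containsabenoughb} formally from \ref{T:containsabenougha} together with \Cref{T:NecSuff}\ref{T:NecSuff1}, whose proof reduces via \Cref{L:R2Artin} to an Artinian local base with algebraically closed residue field and then propagates the factorization $f(X)=Z$ from the closed fibre to the whole base using the schematic density of the \'etale torsion subschemes $X[\ell^n]$ (for $\ell$ invertible on the base), on which everything splits into copies of $S$. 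You instead pass to the quotient $Q=Y/A$ and kill the composite $h\colon X\to Y\to Q$ by rigidity: fibrewise vanishing of a homomorphism of abelian schemes implies global vanishing (\cite[Prop.~6.1]{GIT} applied locally on $S$, or equivalently the unramifiedness of the Hom functor), whence $f$ factors through $\ker(Y\to Q)=A$ and $f(X)=A$. Your argument is correct and arguably more conceptual --- it avoids the choice of an auxiliary prime $\ell$ and the Artinian d\'evissage --- but it buys this at the cost of the one input you flag only in passing: the existence of $Y/A$ as an abelian scheme for $A$ a positive-dimensional abelian subscheme over a general (locally noetherian, possibly non-reduced) base. This is true but not elementary: one realizes $Y/A$ as the fppf quotient, an algebraic space since $Y\to Y/A$ is an $A$-torsor, and then invokes Raynaud's theorem that a smooth proper group algebraic space with connected fibres is an abelian scheme; the paper's torsion-density argument needs only quotients by finite flat subgroups and so sidesteps this entirely. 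If you make that quotient step explicit (with a reference), your proof is complete.
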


We note here that other conditions for the image of a homomorphism of abelian schemes to be an abelian scheme can be found in \cite[\S4]{FS08}.   
For instance, over a DVR, they show that it suffices to show that the image is normal (note that in this case the image is always a flat proper group scheme; see \Cref{R:FlatDVR}).  They also point out the fact that over an arbitrary base, it suffices to show that the image is flat and every geometric fiber is smooth (see \Cref{R:DVRconds}).

This paper is organized as follows.  In \S \ref{S:ImClPre}, we review some preliminaries on morphisms of abelian schemes, which allow us to establish the equivalence of conditions \ref{T:ImIsAbSch1}, \ref{T:ImIsAbSch2}, and \ref{T:ImIsAbSch3} of \Cref{T:ImIsAbSch} in \Cref{P:FlatSuff} (which immediately implies \Cref{T:ImIsAbSch} in the case where $S$ is the spectrum of a field), and also to reduce \Cref{T:ImIsAbSch} in general to the case where $S$ is the spectrum of an Artinian local ring (\Cref{L:R2Artin}).   
In \S \ref{S:MainPfs}
 we use these results to prove \Cref{T:ImIsAbSch},  \Cref{T:NecSuff}, and \Cref{T:containsabenough}, except for  the assertion in  \Cref{T:ImIsAbSch} that  over \emph{non-reduced bases} in characteristic $0$, 
the equivalent conditions  \ref{T:ImIsAbSch1},  \ref{T:ImIsAbSch2}, and  \ref{T:ImIsAbSch3} of \Cref{T:ImIsAbSch} hold.   That last assertion is proved in \S \ref{S:KerMorph} via deformation theory, as a consequence of  \Cref{T:KerMorph}.  In \S\ref{S:Examples}, we give examples of morphisms of abelian schemes with image that is not an abelian scheme (\Cref{E:Serre} and \Cref{E:charp}); the example in mixed characteristic is due to Serre, and our example in pure characteristic is a variation on the theme.  Finally, in \S \ref{S:ArtinRing}, we show that over \emph{Artinian local rings}, the image of a morphism of abelian schemes is an abelian scheme if and only if the image is flat (\Cref{T:ImIsAbSchArt}); recall that this does not hold in general (e.g.,  \Cref{E:Serre} and \Cref{E:charp}).
In \Cref{S:4} we prove a vanishing of an Ext group (\Cref{P:2}) that we need for a deformation theory argument in \Cref{S:KerMorph}.

\subsection*{Acknowledgments}

We thank Brian Conrad and Aaron Landesman for useful comments on a draft. We also want to especially thank Patrick Brosnan for pointing out a serious error in a previous version of this paper.

\section{Preliminaries}\label{S:ImClPre}

A crucial step 
     in proving \Cref{T:ImIsAbSch} is  the standard fact that the quotient of an abelian scheme
by a flat kernel of a homomorphism is well-behaved; we state here a slightly more general version of this standard fact, which we will also use elsewhere:

\begin{teo}[{\cite[Exp.~V, Cor.~10.1.3]{sga3-1}}]\label{T:quot}
Let $f\colon X\to Y$ be a homomorphism of $S$-group schemes, with $X$ locally of finite presentation over $S$.  If $\ker(f)$ is flat over $S$, then the fppf sheafification of the functor to groups $(S'\to S)\mapsto X(S')/(\ker(f)(S'))$ is representable by an $S$-group  scheme $X/\ker (f)$ locally of finite presentation over $S$, and $f$ factors as 
$$
\xymatrix{
X\ar[rr]^f \ar[rd]_{\pi}&& Y \\
& X/\ker (f) \ar[ru]_\iota& 
}
$$
where $\pi$ is the canonical projection and $\iota$ is a monomorphism.
Moreover, this factorization is stable under base change.  If in addition $X/S$ is proper and $Y/S$ is separated, then $X/\ker(f)$ is proper over $S$, $\iota$ is a closed immersion, and $X/\ker(f)$ agrees with $f(X)$,  the scheme theoretic image of $X$ under $f$.
\end{teo}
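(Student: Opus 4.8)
The representability of the quotient is the genuinely hard input, and the plan is to take it as given from \cite{sga3-1}: since $H := \ker(f)$ is flat over $S$, the fppf sheafification of $S' \mapsto X(S')/H(S')$ is represented by an $S$-group scheme $Z := X/\ker(f)$ that is locally of finite presentation over $S$, and the canonical projection $\pi\colon X \to Z$ is faithfully flat. Granting this, everything else is formal or standard, and I would organize the deduction as follows.

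First the factorization. The homomorphism $f$ is trivial on $H$, so it factors through the presheaf quotient; as $Y$ is an fppf sheaf, it factors through the sheafification, yielding a unique homomorphism $\iota\colon Z \to Y$ with $\iota \circ \pi = f$, where $\pi$ is the canonical projection of the quotient. To see $\iota$ is a monomorphism it suffices, for a homomorphism of $S$-group schemes, to check that $\ker(\iota)$ is trivial: any point of $\ker(\iota)$ lifts fppf-locally along the faithfully flat $\pi$ to some $x \in X$ with $f(x) = e$, hence $x \in H$ and $\pi(x) = e$, so that $\ker(\iota)$, being an fppf sheaf, is trivial. Stability under base change is immediate, since formation of the fppf quotient, of $\pi$, and of $\iota$ all commute with base change (again \cite{sga3-1}) and flatness of $H$ is preserved, so the hypotheses persist after base change.

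The main remaining point is the proper case; assume $X/S$ proper and $Y/S$ separated. I would establish properness of $Z/S$ by checking the three conditions directly rather than invoking descent along $\pi$. It is \emph{separated}, since $\iota$ is a monomorphism, hence separated, and $Y/S$ is separated, so the composite $Z \to Y \to S$ is separated. It is of \emph{finite type}, being locally of finite presentation by the above and quasi-compact over $S$: indeed $\pi$ is surjective and $X/S$ is quasi-compact, so over each quasi-compact open of $S$ the scheme $Z$ is the continuous image of a quasi-compact scheme. It is \emph{universally closed} by a direct image argument: after any base change $S' \to S$ the map $X_{S'} \to S'$ is closed and $\pi_{S'}$ is surjective, so for a closed $C \subseteq Z_{S'}$ the set $\pi_{S'}^{-1}(C)$ is closed with the same image in $S'$ as $C$, which is thus closed. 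Hence $Z/S$ is proper. Then $\iota\colon Z \to Y$ is a morphism from a proper $S$-scheme to a separated $S$-scheme, so $\iota$ is proper; a proper monomorphism is finite and a finite monomorphism is a closed immersion, so $\iota$ is a closed immersion and $Z \hookrightarrow Y$ is a closed subscheme. Finally, since $\pi$ is faithfully flat and quasi-compact it is scheme-theoretically dominant onto $Z$, so the smallest closed subscheme of $Y$ through which $f = \iota \circ \pi$ factors is exactly $Z$; that is, $Z = f(X)$ is the scheme theoretic image.

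The main obstacle is conceptual rather than computational: the representability of the fppf quotient by a scheme is the deep content of \cite{sga3-1}, which I would not reprove. Granting it, the only places requiring genuine care are verifying that $Z/S$ is of finite type and universally closed (so that the elementary fact ``a proper monomorphism is a closed immersion'' applies) and identifying $Z$ with the scheme theoretic image via faithful flatness of $\pi$.
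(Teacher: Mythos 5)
Your proposal is correct and follows essentially the same route as the paper: take representability and the factorization from \cite[Exp.~V, Cor.~10.1.3]{sga3-1}, deduce properness of $X/\ker(f)$ from surjectivity of $\pi$ together with properness of $X$ and separatedness of $Y$, conclude that $\iota$ is a proper monomorphism and hence a closed immersion, and then identify the quotient with the scheme theoretic image. The only differences are that you unpack the citation for ``image of a proper scheme is proper'' into its separate checks and justify the monomorphism property and the identification $Z = f(X)$ via faithful flatness of $\pi$, where the paper simply invokes the reference and calls the last step clear.
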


\begin{proof}
The assertion of the representability of the fppf sheafififcation of the functor to groups $(S'\to S)\mapsto X(S')/(\ker(f)(S'))$, as well as the factorization of $f$ into the canonical projection $\pi$ and the monomorphism $\iota$, is exactly \cite[Exp.~V, Cor.~10.1.3]{sga3-1}.  The fact that this factorization is stable under base change follows directly from the fact that the functor $(S'\to S)\mapsto X(S')/(\ker(f)(S'))$ is stable under base change by construction.

Now assume that $X/S$ is proper and $Y/S$ is separated.  Since every
monomorphism of schemes is separated (e.g., \cite[Prop.~9.13(1)]{GW})
we can conclude by stability of separatedness under composition that
$X/\ker (f)$ is separated.  Next we claim that the canonical
projection $\pi$ is surjective. Indeed, as kernels are stable under base
change by construction, for every $S'\to S$ the
homomorphism of groups $X(S')\to X(S')/\ker(f)(S')$ is surjective.
Since a surjective homorphism of functors remains surjective after
passing to the fppf sheafification, $\pi$ is surjective as
claimed.  Consequently, we may conclude that $X/\ker(f)$ is proper
(e.g., \cite[Prop.~12.59]{GW}).  It follows that $\iota$ is proper
(e.g., \cite[Prop.~12.58]{GW}), and therefore a closed immersion (e.g., \cite[\href{https://stacks.math.columbia.edu/tag/04XV}{Lem.~04XV}]{stacks-project}).  The
agreement of $X/\ker(f)$ with $f(X)$ is then clear.
\end{proof}

\begin{pro}\label{P:FlatSuff}
  Let $f\colon X \to Y$ be a homomorphism of group schemes over $S$ with $X/S$ smooth and proper and $Y/S$ separated.
 Then the following are equivalent:
  \begin{enumerate}[label=(\alph*)]
 \item \label{P:FlatSuff1}   $\ker(f)$ is a flat sub-$S$-group scheme of $X/S$, 
 \item \label{P:FlatSuff2} the scheme theoretic image $f(X)\subseteq Y$ is a smooth proper sub-$S$-group scheme, and,  
 \item \label{P:FlatSuff3} $f$
  factors as
 \begin{equation}\label{E:ImIsGpSch}
\xymatrix{
X\ar@{->>}[r]_\pi \ar@/^1pc/[rr]^f&Z \ar@{^(->}[r]_\iota&Y
}
\end{equation}
where $X/\ker (f)\cong Z\subseteq Y$ is a sub-$S$-group scheme,
$\pi$ is a flat surjective homomorphism, and $\iota$ is the natural
inclusion.  Moreover, $Z=f(X)$, and the factorization \eqref{E:ImIsGpSch} is stable under base change.
\end{enumerate}
Moreover, if $S=\operatorname{Spec}K$ for a field $K$, then \ref{P:FlatSuff1}, \ref{P:FlatSuff2}, and \ref{P:FlatSuff3} hold.
\end{pro}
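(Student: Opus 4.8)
The plan is to treat Theorem~\ref{T:quot} as the engine driving the equivalence: I would prove \ref{P:FlatSuff1} $\Leftrightarrow$ \ref{P:FlatSuff3} essentially formally, and concentrate the genuine geometric content in the implications involving \ref{P:FlatSuff2}. Assuming \ref{P:FlatSuff1}, the hypotheses ($X$ locally of finite presentation, $X/S$ proper, $Y/S$ separated) make Theorem~\ref{T:quot} directly applicable, and it supplies the factorization $X \xrightarrow{\pi} Z := X/\ker(f) \xrightarrow{\iota} Y$ with $Z = f(X)$, $\iota$ a closed immersion, $\pi$ surjective, and everything stable under base change. The only clause not handed to us is flatness of $\pi$, which I would extract from the torsor structure: $\pi$ is an fppf $\ker(f)$-torsor, so over a trivializing fppf cover $Z' \to Z$ its pullback is the projection $\ker(f) \times_S Z' \to Z'$, a base change of the flat morphism $\ker(f)/S$; flatness of $\pi$ then follows by fppf descent, and with surjectivity $\pi$ is faithfully flat. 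For the converse, given a factorization as in \ref{P:FlatSuff3}, the inclusion $\iota$ is a monomorphism, whence $\ker(f) = \ker(\pi)$; and $\ker(\pi)$ is the pullback of the flat morphism $\pi$ along the identity section $e_Z \colon S \to Z$, hence flat over $S$, giving \ref{P:FlatSuff1}.

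For \ref{P:FlatSuff1} $\Rightarrow$ \ref{P:FlatSuff2}, the step above already identifies $Z = f(X)$ as a proper sub-$S$-group scheme and exhibits $\pi \colon X \to Z$ as faithfully flat and of finite presentation. Smoothness of $Z/S$ I would obtain by descent of smoothness along $\pi$: the composite $X \to Z \to S$ is smooth and $\pi$ is faithfully flat of finite presentation, so $Z \to S$ is smooth. Equivalently, $Z/S$ is flat by descent of flatness along $\pi$, its geometric fibers $Z_s = X_s/\ker(f)_s$ are quotients of smooth group schemes and hence smooth, and one concludes by the fibral criterion for smoothness.

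The reverse implication \ref{P:FlatSuff2} $\Rightarrow$ \ref{P:FlatSuff1} is where I expect the real work. Let $g \colon X \to Z = f(X)$ be the corestriction of $f$ to its schematic image, so that $\ker(f) = \ker(g)$ because $Z \hookrightarrow Y$ is a monomorphism. As $f$ is proper, $g$ is surjective on underlying spaces; since each fiber $Z_s$ is reduced (it is smooth by \ref{P:FlatSuff2}), a point-chase upgrades this to scheme-theoretic surjectivity of each fiber map $g_s \colon X_s \to Z_s$. A surjective homomorphism of smooth finite-type group schemes over the field $\kappa(s)$ is faithfully flat, as it factors as $X_s \twoheadrightarrow X_s/\ker(g_s) \xrightarrow{\sim} Z_s$, so $g_s$ is flat for every $s$. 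With $X/S$ and $Z/S$ both flat and locally of finite presentation, the fibral criterion for flatness then forces $g$ to be flat, and hence $\ker(f) = \ker(g)$, being the pullback of $g$ along the identity section of $Z$, is flat over $S$.

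Finally, when $S = \operatorname{Spec} K$, every $K$-scheme is flat over $K$, so $\ker(f)$ is automatically flat and \ref{P:FlatSuff1} holds; the equivalences just established then give \ref{P:FlatSuff2} and \ref{P:FlatSuff3} for free. Since the formal heart of the argument is Theorem~\ref{T:quot}, the step I anticipate demanding the most care is \ref{P:FlatSuff2} $\Rightarrow$ \ref{P:FlatSuff1}: flatness of $\ker(f)$ is not visible from the image directly, and recovering it requires descending to fibers, using reducedness of the smooth fibers of $Z$ to see that $g$ is fiberwise a quotient map, and then reassembling via the fibral flatness criterion.
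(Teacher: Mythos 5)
Your proposal is correct, and its skeleton---\Cref{T:quot} as the engine, with \ref{P:FlatSuff1} $\Leftrightarrow$ \ref{P:FlatSuff3} essentially formal and the kernel recovered as the pullback of the flat $\pi$ along the identity section---matches the paper's. Where you genuinely diverge is in how the two nontrivial ingredients, flatness of $\pi$ and smoothness of $Z$, are produced. The paper first settles $S=\operatorname{Spec}K$: there $Z$, being the scheme-theoretic image of the (geometrically) reduced $X$, is geometrically reduced, hence generically smooth, hence smooth as a group scheme, and flatness of $\pi$ is \cite[Lem.~6.12]{GIT}; the general case is then bootstrapped via the fibral criterion, using that each $\pi_s$ is the field-case quotient map, to get $\pi$ flat, then $Z$ flat with smooth geometric fibers. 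You instead obtain flatness of $\pi$ directly over arbitrary $S$ from the fppf-torsor structure of $X \to X/\ker(f)$ (trivializing over an fppf cover of $Z$ and descending flatness), and smoothness of $Z/S$ by descent of smoothness along the faithfully flat, locally finitely presented $\pi$. This inverts the logical order---the field case falls out as a corollary rather than serving as an input---and avoids the reducedness/generic-smoothness argument entirely. Two small points to make explicit: the torsor description ($X \times_Z X \cong \ker(f)\times_S X$ together with fppf-local triviality) is part of the SGA3 quotient package but is not literally contained in the statement of \Cref{T:quot} as quoted, so it deserves a sentence of justification; and your \ref{P:FlatSuff2} $\Rightarrow$ \ref{P:FlatSuff1} via scheme-theoretic surjectivity of the fiber maps onto the reduced fibers $Z_s$ plus the fibral flatness criterion is in effect a proof of the instance of \cite[Lem.~6.12]{GIT} that the paper simply cites. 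Both routes are sound.
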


\begin{proof} 
First we show that \ref{P:FlatSuff1}, \ref{P:FlatSuff2}, and \ref{P:FlatSuff3} hold in the case where $S=\operatorname{Spec} K$.
Since we are working over a field, assertion \ref{P:FlatSuff1}  holds trivially.  
Consequently, from \Cref{T:quot} one can factor $f$ through the proper closed sub-group scheme $Z=X /\ker (f)$.  
Next we show that   $Z$ is smooth.  Initially, note that $Z$, as the scheme theoretic image of $f$, is reduced, 
since $X$ is.  
In fact, since the factorization in \Cref{T:quot} is stable under base change, we can use the same argument to show that $Z$ is geometrically reduced.
Therefore, $Z$  is generically smooth, and, being a group scheme, it is consequently smooth.  Therefore, we have that $Z$ is a smooth proper group scheme.
  The flatness of $\pi$ follows from say \cite[Lem.~6.12]{GIT}, which implies that any surjective morphism of group schemes with flat source and smooth target is flat.  This shows that  \ref{P:FlatSuff1}, \ref{P:FlatSuff2}, and \ref{P:FlatSuff3}  hold for $S=\operatorname{Spec}K$.  

With this, we now show the equivalence of  \ref{P:FlatSuff1}, \ref{P:FlatSuff2}, and \ref{P:FlatSuff3} over a general $S$.  
Given assertion  \ref{P:FlatSuff3} in \Cref{T:ImIsAbSch}, then assertion  \ref{P:FlatSuff1} in the theorem is obvious since $\pi$ is assumed to be flat, and the kernel is obtained via base change; assertion  \ref{P:FlatSuff2} holds as it is part of assertion   \ref{P:FlatSuff3}.   Similarly, given assertion  \ref{P:FlatSuff2}, then assertion  \ref{P:FlatSuff1} holds by virtue of  \cite[Lem.~6.12]{GIT}, which asserts that $\pi$ is flat.  

Therefore, let us consider the implication  \Cref{P:FlatSuff} \ref{P:FlatSuff1} $\implies$ \Cref{P:FlatSuff}\ref{P:FlatSuff3}.  
Putting together \Cref{T:quot} and \Cref{P:FlatSuff}\ref{P:FlatSuff1}, we obtain a factorization 
$$
\xymatrix{
X\ar@{->>}[r]_\pi \ar@/^1pc/[rr]^f&Z \ar@{^(->}[r]_\iota&Y
}
$$
where $X/\ker(f)\cong Z\subseteq Y$ is a commutative sub-$S$-group scheme, proper over $S$,  $\pi$ is a  surjective homomorphism, and $\iota$ is the natural inclusion.  Moreover, this factorization is stable under base change.

We need to show that $Z$ is smooth and proper. First we prove that $\pi$ is flat.  Since $X$ is flat over $S$, it suffices to show that $\pi_s\colon X_s\to Z_s$ is flat for all $s\in S$ (e.g., \cite[Cor.~14.27]{GW}).  Since the construction of $Z$ is stable under base change, we have that    $\pi_s\colon X_s\to Z_s$ agrees with the map $X_s\to X_s/\ker (f_s)$, and this is  a flat surjective homomorphism to a smooth proper group scheme, as it is the image of the homomorphism  $\pi_s\colon X_s\to Y_s$  (the case we just proved).    Thus $\pi$ is flat.  Being surjective, $\pi$ is faithfully flat, and therefore, $X$ being flat over $S$, we may conclude that $Z$ is flat over $S$ (e.g., \cite[Cor.~14.12]{GW}). 
 Now that $Z/S$ is flat (and locally of finite presentation, by virtue of being proper), we can check the smoothness of $Z$ over $S$ by checking that the geometric fibers are smooth; since we have seen that $Z_s$ is a smooth proper group scheme, we have that the fibers are smooth. 
 Therefore $Z/S$ is smooth and proper, and we are done.
\end{proof}

\Cref{P:FlatSuff} shows that conditions \ref{T:ImIsAbSch1}, \ref{T:ImIsAbSch2}, and \ref{T:ImIsAbSch3} of \Cref{T:ImIsAbSch} are equivalent.  Since \Cref{T:ImIsAbSch}~\ref{T:ImIsAbSch1} is automatic when $S$ is the spectrum of a field, this proves~\Cref{T:ImIsAbSch} when $S$ is the spectrum of a field.

\subsection{Reduction to Artinian rings and DVRs} 

By virtue of \Cref{P:FlatSuff}, in order to prove \Cref{T:ImIsAbSch}, it suffices to show that the kernel of a given morphism of abelian schemes is flat.  
 For clarity, we recall in the remark below the standard approach to  reducing flatness computations to computations over Artinian local rings with algebraically closed residue field, or, in the case where $S$ is reduced, to the case of DVRs.

\begin{lem}[Reduction to Artinian rings]\label{L:R2Artin}
Let $f:X\to Y$ be a homomorphism of abelian schemes over $S$.  
Then the equivalent conditions of 
\Cref{T:ImIsAbSch}~\ref{T:ImIsAbSch1},  \ref{T:ImIsAbSch2}, and  \ref{T:ImIsAbSch3} hold for $f$ if and only if  for 
every  Artinian local ring $(R,\mathfrak m)$ with algebraically  closed residue field and every morphism $S':= \operatorname{Spec} R\to S$, the equivalent conditions of  
\Cref{T:ImIsAbSch}~\ref{T:ImIsAbSch1},  \ref{T:ImIsAbSch2}, and  \ref{T:ImIsAbSch3} hold for the 
 base change $f_{S'}:X_{S'}\to Y_{S'}$.
  \end{lem}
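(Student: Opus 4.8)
The plan is to route everything through \Cref{P:FlatSuff}, which (since abelian schemes are smooth and proper, and separated) shows that the three conditions \ref{T:ImIsAbSch1}, \ref{T:ImIsAbSch2}, \ref{T:ImIsAbSch3} are equivalent and that the operative one is \ref{T:ImIsAbSch1}: that $G:=\ker(f)$ is flat over $S$. The same applies verbatim to $f_{S'}$ over $S'$. Since the kernel is the fiber product $X\times_{f,Y,e_Y}S$ of $f$ with the identity section $e_Y\colon S\to Y$, its formation commutes with arbitrary base change, so $\ker(f_{S'})=G\times_S S'$. Thus the asserted biconditional is precisely the statement that $G$ is flat over $S$ if and only if $G\times_S S'$ is flat over $S'$ for every Artinian local ring $(R,\mathfrak m)$ with algebraically closed residue field and every $S'=\operatorname{Spec}R\to S$. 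I would also record at the outset that $G\to S$ is of finite type, hence (as $S$ is locally noetherian) of finite presentation, so that the usual noetherian-local flatness machinery is available.

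The forward implication is immediate, since flatness is stable under base change. For the converse I would check flatness of $G/S$ pointwise: it suffices to show, for each $s\in S$, that $G_A:=G\times_S\operatorname{Spec}A$ is flat over the noetherian local ring $A:=\mathcal O_{S,s}$. Applying the local criterion of flatness (see, e.g., \cite{GW}, or EGA~$0_{\mathrm{III}}$, 10.2.1) at each point of $G_A$—where the required ideal-separatedness holds by Krull's intersection theorem, the relevant local rings being finite-type local $A$-algebras—reduces this to showing that $G_A\times_A\operatorname{Spec}(A_n)$ is flat over the Artinian local ring $A_n:=A/\mathfrak m^{\,n+1}$ for every $n\ge 0$.

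The remaining point is that $A_n$ has residue field $\kappa(s)$, which need not be algebraically closed, so the hypothesis does not apply to $\operatorname{Spec}A_n\to S$ directly. To remedy this I would choose, for each $n$, a flat local extension $A_n\to R_n$ of Artinian local rings with $\mathfrak m_{R_n}=\mathfrak m_{A_n}R_n$ and residue field an algebraic closure of $\kappa(s)$; such extensions exist by the standard construction realizing a prescribed residue-field extension (EGA~$0_{\mathrm{III}}$, 10.3.1), and $R_n$ is again Artinian since $\mathfrak m_{R_n}^{\,n+1}=\mathfrak m_{A_n}^{\,n+1}R_n=0$. The composite $\operatorname{Spec}R_n\to\operatorname{Spec}A_n\to\operatorname{Spec}A\to S$ is then an allowed Artinian test object, so the hypothesis yields that $(G_A\times_A\operatorname{Spec}A_n)\times_{A_n}R_n=G\times_S\operatorname{Spec}R_n$ is flat over $R_n$. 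As $A_n\to R_n$ is faithfully flat, flatness descends along it (\cite{GW}, \cite{stacks-project}), giving flatness of $G_A\times_A\operatorname{Spec}A_n$ over $A_n$. Ranging over all $n$ and all $s\in S$ then yields flatness of $G$ over $S$, i.e.\ condition \Cref{T:ImIsAbSch}~\ref{T:ImIsAbSch1} for $f$.

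I do not expect any single hard step here: the content is bookkeeping around the local criterion. The main things to get right are that the finiteness and ideal-separatedness hypotheses needed for the infinitesimal form of the criterion genuinely hold, and that the enlargement of the residue field is performed by a \emph{faithfully flat} map so that flatness can be transported back down; the essential input is simply that flatness over a noetherian local ring is detected on its Artinian quotients.
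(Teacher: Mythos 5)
Your proof is correct and follows essentially the same route as the paper's: reduce via \Cref{P:FlatSuff} to flatness of $\ker(f)$, use faithfully flat descent and the residue-field enlargement of EGA~$0_{\mathrm{III}}$~10.3.1, and detect flatness over a noetherian local ring on its Artinian quotients. The only difference is cosmetic ordering (you pass to Artinian quotients before enlarging the residue field, the paper does the reverse), and your version is if anything slightly more carefully justified.
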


\begin{proof}
By \Cref{P:FlatSuff}, the forward implication is clear.  For the reverse implication, it suffices to establish condition \Cref{T:ImIsAbSch}\ref{T:ImIsAbSch1}, that $\ker(f)$ flat.  As 
flatness satisfies faithfully flat descent (e.g., \cite[Cor.~14.12]{GW}), to show that $\ker(f)$ is flat over $S$  we may reduce to the case that $S=\operatorname{Spec}R$ for $R$ a local ring, or even a complete local ring.  
Via the strict Henselization, one may then assume that $R$ is a local ring with separably closed residue field. 
In fact, taking a faithfully flat base change, we may assume that $R$ is a complete local ring with algebraically closed residue field  \cite[EGA III Ch.~0, \S 10, Prop.~10.3.1, p.364]{EGAIII}.  Consequently (e.g., \cite[Thm.~B.51]{GW}), it suffices to consider the case where $(R,\mathfrak m)$ is an Artinian local ring with algebraically  closed residue field.  
\end{proof}

\begin{lem}[Reduction to DVRs]\label{L:R2DVR}
Let $f:X\to Y$ be a homomorphism of abelian schemes over $S$ with $S$ reduced.  
Then the equivalent conditions of 
\Cref{T:ImIsAbSch}\ref{T:ImIsAbSch1},  \ref{T:ImIsAbSch2}, and  \ref{T:ImIsAbSch3} hold if and only if  for 
every  DVR  $R$ and every morphism $S':= \operatorname{Spec} R\to S$, 
 the equivalent conditions of  
\Cref{T:ImIsAbSch}\ref{T:ImIsAbSch1},  \ref{T:ImIsAbSch2}, and  \ref{T:ImIsAbSch3} hold for the 
 base change $f_{S'}:X_{S'}\to Y_{S'}$.
\end{lem}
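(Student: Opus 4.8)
The plan is to reduce the entire statement to the flatness of $K := \ker(f)$ over $S$. By \Cref{P:FlatSuff}, conditions \ref{T:ImIsAbSch1}, \ref{T:ImIsAbSch2}, and \ref{T:ImIsAbSch3} of \Cref{T:ImIsAbSch} are equivalent to the flatness of $K/S$, and the same equivalence holds for $f_{S'}$ over any $S' \to S$, since $f_{S'}\colon X_{S'}\to Y_{S'}$ is again a homomorphism of abelian schemes, so that \Cref{P:FlatSuff} applies verbatim. Moreover the formation of kernels commutes with base change (as used in the proof of \Cref{T:quot}), so $\ker(f_{S'}) = (K)_{S'}$. Thus the lemma amounts to the assertion that $K$ is flat over $S$ if and only if $(K)_{S'}$ is flat over $\operatorname{Spec} R$ for every DVR $R$ and every morphism $S' = \operatorname{Spec} R \to S$. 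The forward implication is immediate, because flatness is stable under arbitrary base change.

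For the reverse implication I would first reduce to the affine case. Flatness of $K/S$ can be checked on an affine open cover of $S$, and the hypothesis is inherited by every open subscheme $U \subseteq S$, since a DVR mapping to $U$ is a fortiori a DVR mapping to $S$; so I may assume $S = \operatorname{Spec} A$ with $A$ reduced and noetherian. The morphism $K \to S$ is proper and of finite presentation, being a closed sub-$S$-group scheme of the abelian scheme $X$ over the locally noetherian base $S$. I then invoke the valuative criterion for flatness over a reduced noetherian base: a finitely presented morphism to such a base is flat as soon as its pullback along every DVR mapping to the base is flat. Applying this to $K \to S$ yields the flatness of $K$ over $S$, and hence, via \Cref{P:FlatSuff}, all of \ref{T:ImIsAbSch1}, \ref{T:ImIsAbSch2}, and \ref{T:ImIsAbSch3}.

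The crux, and the only place where the hypothesis that $S$ is reduced enters, is precisely this valuative criterion for flatness. Over a DVR the flatness of a finitely generated module is equivalent to torsion-freeness, and the substance of the criterion is that over a reduced base the torsion and embedded phenomena obstructing flatness are all detected after pulling back to suitable discrete valuation rings. This genuinely fails over non-reduced bases, which is exactly why the non-reduced characteristic-$0$ case of \Cref{T:ImIsAbSch} cannot be treated by this reduction and must instead be handled by deformation theory elsewhere in the paper. Accordingly, the main obstacle in writing this up is to cite the valuative criterion in a sufficiently precise form (e.g.\ from \cite{stacks-project}) and to confirm that $K \to S$ meets its finite-presentation and noetherianity hypotheses, both of which are automatic in the present setting.
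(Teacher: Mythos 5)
Your proposal is correct and follows essentially the same route as the paper: both reduce the statement, via \Cref{P:FlatSuff} and the stability of kernels under base change, to the flatness of $\ker(f)$ over $S$, and then apply the valuative criterion for flatness over a reduced (locally noetherian) base. The extra detail you supply about localizing to affine opens and verifying the finite-presentation hypotheses is a harmless elaboration of what the paper leaves implicit.
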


\begin{proof}
By \Cref{P:FlatSuff}, the forward implication is clear.  For the reverse implication, it suffices to establish condtion \Cref{T:ImIsAbSch}\ref{T:ImIsAbSch1}, that $\ker(f)$ flat. 
For this we can use the valuative criterion for flatness (e.g., \cite[Thm.~14.34]{GW}), which implies that it suffices to show that  for 
every  DVR  $R$ and every morphism $S':= \operatorname{Spec} R\to S$, the base change of $\ker(f)$ is flat. 
Since kernels are stable under base change,  it suffices to show that the kernel of the 
 base change $f_{S'}:X_{S'}\to Y_{S'}$ is flat, but this is true by assumption. 
\end{proof}

\section{Proofs of \Cref{T:ImIsAbSch} over reduced bases, \Cref{T:NecSuff}, and \Cref{T:containsabenough}}\label{S:MainPfs}

In this section we prove \Cref{T:ImIsAbSch},  \Cref{T:NecSuff}, and \Cref{T:containsabenough}, except for  the assertion in  \Cref{T:ImIsAbSch} that  over \emph{non-reduced bases} in characteristic $0$, 
the equivalent conditions  \ref{T:ImIsAbSch1},  \ref{T:ImIsAbSch2}, and  \ref{T:ImIsAbSch3} of \Cref{T:ImIsAbSch} hold.  We postpone that proof until the next section.

\begin{proof}[Proof of \Cref{T:ImIsAbSch}, Part I]
The equivalence of the conditions 
\Cref{T:ImIsAbSch}\ref{T:ImIsAbSch1},  \ref{T:ImIsAbSch2}, and  \ref{T:ImIsAbSch3}, as well as the case $S=\operatorname{Spec}K$ for a field $K$, are contained in \Cref{P:FlatSuff}.  

The case where $S$ is reduced of characteristic $0$ is as follows.  
Note that we give a proof in characteristic $0$ in the case where $S$ is not assumed to be reduced later, via \Cref{T:KerMorph}, but we find the proof that follows,  in the reduced case, to be instructive, particularly in describing the failure of images of homomorphisms of abelian schemes to be abelian schemes over DVRs in positive and mixed characteristic. 

Continuing with the proof,  by \Cref{L:R2DVR}, we have reduced to the case where $S=\operatorname{Spec}R $ for a DVR $R$.  Again by \Cref{L:R2DVR}, it suffices to show that the image of $f$ is an abelian scheme.  
We use the Ner\'on--Ogg--Shafarevich criterion (e.g., \cite[Thm.~5, p.183]{BLR}, \cite[Thm.~1]{serretate}) to extend the image.  More precisely, we restrict to the generic point $\eta$ of $S$, and observe that over a field we have that $f_\eta$ factors as $X_\eta\twoheadrightarrow Z_\eta \hookrightarrow Y_\eta$ for some abelian variety $Z_\eta$ (\Cref{P:FlatSuff}).  The  Ner\'on--Ogg--Shafarevich criterion then implies that $Z_\eta$ extends to an abelian scheme $Z$ over $S$; \emph{the fact that we are in characteristic $0$} implies there is a containment $Z\subseteq Y$ \cite[Prop.~2, \S 7.5, p.186]{BLR} (note this can fail outside of characteristic $0$; see   \cite[Exa.~8, \S 7.5, p.190]{BLR} and \S \ref{S:Examples}).  
  Since $Z$ is closed, the morphism $f$ factors through $Z$, and since $f:X\to Z$ is generically surjective, it is surjective ($f:X\to Y$ is proper).  Thus we have factored $f$ as $X\twoheadrightarrow Z\hookrightarrow Y$.

  Finally, consider the case where every local ring is a discrete valuation ring of mixed characteristic with valuation $\nu$, residue field of characteristic $p>0$, and ramification index $e:=\nu(p)$ such that $e<p-1$.  It suffices to consider the local case.  The argument is the same as the previous case, except that we use \cite[Thm.~4(i), \S 7.5, p.187]{BLR} to get the inclusion of $Z$ into $Y$.

 \end{proof}

\begin{proof}[Proof of \Cref{T:NecSuff}]
For \ref{T:NecSuff2}, we argue as follows.  By \Cref{L:R2DVR}, we can reduce to the case that $S=\operatorname{Spec}R$ for a DVR.
So suppose that there is an abelian subscheme $Z\subseteq Y$ with $Z$ contained in $f(X)$ (i.e., we have closed embeddings $Z\hookrightarrow  f(X)\hookrightarrow Y$),  and that 
  for the generic point $\eta$ of $S$, we have  $\dim Z_\eta = \dim f(X)_\eta$.
 As $Z_\eta\subseteq f(X)_\eta$ are both proper group schemes of  the same dimension, we can conclude that $Z_\eta$ is the reduced subscheme of the identity component of $f(X)_\eta$.
    If we consider the composition $X_\eta \to f(X)_\eta \to Y_\eta$, then since $X_\eta$ is reduced and connected, the morphism $X_\eta \to f(X)_\eta$ must factor through $Z_\eta$, as it must factor through the reduction, and takes the identity component to the identity component. 
  Since $S$ is regular and $X/S$ is smooth, $X$ is regular, and therefore the rational map $X\dashrightarrow Z$ extends to a morphism \cite[Cor.~6, \S 8.4, p.234]{BLR}.  By the universal property of the scheme theoretic image, this implies that $f(X)=Z$. 
  
  \medskip 
 For \ref{T:NecSuff1}, we argue as follows. 
 By \Cref{L:R2Artin}, we can reduce to the case that $S=\operatorname{Spec}R$ for an Artinian local ring $(R,\mathfrak m)$ with algebraically  closed residue field.
So suppose that there is an abelian subscheme $Z\subseteq Y$ with $Z$
contained  in $f(X)$ (i.e., we have closed embeddings
$Z\hookrightarrow  f(X)\hookrightarrow Y$), such that $\dim Z = \dim
f(X)$.  Since we are working over an Artinian ring, the supports of the
schemes satisfy  $|Z_s|=|Z|\subseteq |f(X)|=|f(X)_s|$, where $s$ is
the special point of $S$.  Now $f(X)$, being the continuous image of
an irreducible space, is irreducible, and so
       the hypothesis that $\dim Z = \dim f(X)$ implies that $|Z|=|f(X)|$, so we can conclude that $Z_s$ is the reduced subscheme of $f(X)_s$.  
Consequently, the morphism $f_{s}:X_{s}\to Y_{s}$ factors through  $Z_{s}$.

  Now choose  a prime $\ell$ that is invertible in $R$, and consider the closed subschemes $X[\ell^n]\hookrightarrow X$ and $Y[\ell^n]\hookrightarrow Y$ for all $n$ .  
It is a basic fact  for abelian schemes (e.g.,  \cite[Proof of Thm.~3.19, p.54]{conradtrace}) that 
$$
X = \overline {\bigcup X [\ell^n]}.
$$ 
  Moreover, from our choice of $\ell$, the $X [\ell^n]$   are proper \'etale group schemes over $S$, and since $R$ is an Artinian local ring with algebraically closed residue field, each of 
$X [\ell^n]$ and $Y[\ell^n]$ is a disjoint union of irreducible
components canonically isomorphic to $S$.  The restricted morphism
$f[\ell^n]\colon X [\ell^n]\to Y[\ell^n]$ is a morphism over $S$.
Therefore, on each irreducible component, $f[\ell^n]$ is an
isomorphism onto its image.  (Of course, some components of
$X[\ell^n]$ may map to the same component of $Y[\ell^n]$.)

Restricting to the special point, we have a factorization
$f_s[\ell^n]:X_s[\ell^n] \to Z_s[\ell^n] \hookrightarrow Y_s[\ell^n]$.
As we have inclusions $Z[\ell^n]\hookrightarrow Y[\ell^n]$, and
$f[\ell^n]$ is an isomorphism on each component, we see that we have a factorization
$f[\ell^n]:X[\ell^n] \to Z[\ell^n] \hookrightarrow Y[\ell^n]$.

Since scheme theoretic images and closures commute (the scheme theoretic closure is the scheme theoretic image of the morphism from the disjoint union of the closed subschemes),  we have 
\begin{equation}\label{E:pr:f(X)1}
Z\subseteq f(X )= f(\overline {\bigcup X [\ell^n]})= \overline {\bigcup f(X [\ell^n])} \subseteq \overline {\bigcup Z [\ell^n]} = Z
\end{equation}
so that $Z=f(X)$.
\end{proof}

\begin{proof}[Proof of Theorem \ref{T:containsabenough}]
We start by proving   \Cref{T:containsabenough}~\ref{T:containsabenougha}.  The group schemes $\ker(f)$ and $f(X)$ are proper group schemes over
$S$ (the former is obtained by base change and the latter is a closed subscheme of the proper $Y$).  
We then consider the short exact sequence of proper, but possibly not flat, group schemes
$$
  \xymatrix{
0\ar[r] &\ker (f)\ar[r] &X\ar[r] &f(X)\ar[r]& 0.}
$$
Since kernels are stable under base change (they are defined via fibered products), for each point $s$ of $S$ we have a short exact sequence
\begin{equation}\label{E:ses-fiber-C}
    \xymatrix{0\ar[r]& \ker (f)_s=\ker (f_s)\ar[r]& X_s\ar@{->>}[r]&  f_s(X_s)\ar[r]& 0.}
\end{equation}
At the same time, we have an inclusion of closed subschemes $f_s(X_s)\subseteq f(X)_s$ (e.g., \cite[p.216]{EH00}), and,  since $f$ is proper, the support of the two schemes is the same (e.g., \cite[p.218]{EH00}). Consequently, $\dim f_s(X_s)=\dim f(X)_s$.    
Combining this with the short exact sequence of group schemes \eqref{E:ses-fiber-C}, we obtain 
$$
\dim \ker (f)_s+\dim f(X)_s= \dim X_s.
$$
Now, by virtue of the fact that the dimension of the fibers of a proper scheme over $S$   
gives an  upper semicontinuous function on $S$ (e.g.,  \cite[\href{https://stacks.math.columbia.edu/tag/0D4I}{Lemma 0D4I}]{stacks-project}), and the fact that  $\dim X_s$ is constant since $X$ is an abelian scheme,  the dimensions of both $\ker(f)_s$ and $f(X)_s$ are constant. In particular, the  schematic image $f(X)$ has constant relative dimension $d$ over $S$ for some integer $d$.
    
 \Cref{T:containsabenough}\ref{T:containsabenoughb} now follows immediately from  \Cref{T:containsabenough}\ref{T:containsabenougha} and \Cref{T:NecSuff}\ref{T:NecSuff1}.
                                \end{proof}

\section{Characteristic $0$ via deformation theory}\label{S:KerMorph}

In \Cref{S:MainPfs}, we proved \Cref{T:ImIsAbSch} over reduced base
schemes $S$.  In this section, we will complete the proof of
\Cref{T:ImIsAbSch} by showing it also holds over non-reduced bases in
characteristic~$0$.  Our proof is based on the following theorem, which is valid in any characteristic.

\begin{teo}\label{T:KerMorph}
	Let $S$ be a scheme and let 
$
f:X\to Y
$
be a morphism of abelian schemes over $S$ with kernel $\ker(f)$. 
       	If for every geometric point $s$ of $S$ we have that $\ker(f)_s$ is smooth and $\Hom(\ker(f)_s, \mathbb G_a) = 0$, where $\ker(f)_s$ is the fiber over $s$,  then $\ker (f)$ is smooth over $S$.
\end{teo}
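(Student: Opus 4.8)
The plan is to reduce the assertion to a question of \emph{flatness}, and then to settle that by deformation theory. Since $\ker(f)$ is a closed subscheme of the abelian scheme $X$, it is of finite presentation over $S$, and by hypothesis its geometric fibers are smooth; hence $\ker(f)\to S$ is smooth if and only if it is flat. Flatness is local on $S$ and may be tested after a faithfully flat base change, and the hypotheses — smoothness of the fibers and the vanishing of $\Hom(\ker(f)_s,\mathbb G_a)$ — concern only geometric fibers, hence are stable under base change. Arguing as in the proof of \Cref{L:R2Artin} (strict Henselization, completion, and the local criterion for flatness), I would therefore reduce to the case $S=\operatorname{Spec}A$ with $(A,\mathfrak m)$ an Artinian local ring with algebraically closed residue field $k$, and then induct on the length of $A$, the case $A=k$ being exactly the hypothesis that $\ker(f)_s$ is smooth. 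Thus it suffices to treat a small extension $0\to I\to A'\to A\to 0$ with $I\cong k$, assuming that $G_A:=\ker(f_A)$ is flat (equivalently, smooth and proper) over $A$, and to deduce that $G_{A'}:=\ker(f_{A'})$ is flat over $A'$.

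For the inductive step, the idea is to produce a flat lift of $G_A$ inside the fixed ambient $X_{A'}$ and then to identify it with $G_{A'}$. Since $G_A\hookrightarrow X_A$ is a smooth proper closed subgroup scheme, one considers its deformations as a subgroup scheme of $X_{A'}$; such deformations are unobstructed precisely when an associated class in a suitable $\mathrm{Ext}$-group vanishes, and this is exactly the vanishing established in \Cref{P:2}. Granting that, $G_A$ lifts to a flat (smooth, proper) closed subgroup scheme $\tilde G\subseteq X_{A'}$ with $\tilde G\otimes_{A'}A=G_A$.

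It then remains to check that $\tilde G\subseteq G_{A'}$, with equality. The restriction $f_{A'}\rest{\tilde G}\colon \tilde G\to Y_{A'}$ is a homomorphism that vanishes modulo $I$, since $f_{A'}\rest{G_A}=f_A\rest{\ker(f_A)}=0$; consequently it factors through the kernel of $Y_{A'}\to Y_A$, namely the additive group $\operatorname{Lie}(Y_k)\tensor_k I$, and through $\tilde G\to\tilde G\tensor_{A'}k=G_k$. Its value is thus classified by an element of $\Hom(G_k,\mathbb G_a)\tensor_k\operatorname{Lie}(Y_k)\tensor_k I$, and the hypothesis $\Hom(\ker(f)_s,\mathbb G_a)=0$ forces $f_{A'}\rest{\tilde G}=0$, i.e.\ $\tilde G\subseteq\ker(f_{A'})=G_{A'}$. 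Finally, the closed immersion $\tilde G\hookrightarrow G_{A'}$ is an isomorphism on the special fiber (both equal $\ker(f_k)=G_k$), and $\tilde G$ is flat over $A'$; tensoring $0\to\mathcal K\to\mathcal O_{G_{A'}}\to\mathcal O_{\tilde G}\to 0$ with $k$ and using $\mathrm{Tor}_1^{A'}(\mathcal O_{\tilde G},k)=0$ gives $\mathcal K\tensor_{A'}k=0$, whence $\mathcal K=0$ by Nakayama. Therefore $G_{A'}=\tilde G$ is flat, completing the induction.

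The main obstacle is the deformation-theoretic heart of the inductive step: showing that the smooth proper subgroup scheme $\ker(f_A)\subseteq X_A$ deforms across the square-zero extension, that is, that the governing obstruction in $\mathrm{Ext}^1$ vanishes, which is precisely what \Cref{P:2} is designed to supply. It is worth emphasizing why genuine obstruction theory is needed: the naive linear-algebra approach, identifying $\operatorname{Lie}(\ker f)=\ker(df\colon\operatorname{Lie}(X/S)\to\operatorname{Lie}(Y/S))$ and arguing local freeness by a constant-rank computation, succeeds over reduced bases but fails over an Artinian ring, where a submodule of a free module need not be free. The secondary point requiring care is the compatibility of the lift $\tilde G$ with $f_{A'}$, which is exactly what the rigidity hypothesis $\Hom(\ker(f)_s,\mathbb G_a)=0$ secures.
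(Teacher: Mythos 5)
Your overall architecture matches the paper's: reduce to an Artinian local ring with algebraically closed residue field, induct over a small extension $0\to I\to A'\to A\to 0$, produce a flat lift of $G_A=\ker(f_A)$, show it lands inside $\ker(f_{A'})$ using $\Hom(\ker(f)_s,\mathbb G_a)=0$, and conclude by Nakayama. The containment step and the Nakayama step are essentially the paper's. But there is a genuine gap at the deformation-theoretic heart. You assert that $G_A$ deforms \emph{as a closed subgroup scheme of} $X_{A'}$ and that the governing obstruction ``in $\operatorname{Ext}^1$'' vanishes ``precisely'' by \Cref{P:2}. That is not what \Cref{P:2} provides: it gives $\operatorname{Ext}^2_k(G_0,\mathbb G_a)=0$, which (via \Cref{T:1} and \Cref{L:R2gg0}) kills the obstruction to deforming $G_A$ as an \emph{abstract} flat commutative group scheme. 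Deforming the pair (group scheme, embedding into $X_{A'}$) is a genuinely different problem: by \Cref{T:2}, once an abstract lift $G''$ is fixed, the obstruction to extending the homomorphism $G_A\to X_A$ to $G''\to X_{A'}$ lies in $\operatorname{Ext}^1_k(G_0,\mathfrak x)\otimes_k I$, and there is no reason for that group, or that class, to vanish. No single Ext-vanishing from \Cref{S:4} handles this.

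The missing idea is the interplay between the two deformation problems. The paper observes that the image of this $\operatorname{Ext}^1_k(G_0,\mathfrak x)\otimes I$ obstruction in $\operatorname{Ext}^1_k(G_0,\mathfrak y)\otimes I$ is the obstruction to deforming the composite $G_A\to Y_A$, which is the zero map and hence deforms trivially; by exactness of $0\to\operatorname{Ext}^1_k(G_0,\mathfrak g)\otimes I\to\operatorname{Ext}^1_k(G_0,\mathfrak x)\otimes I\to\operatorname{Ext}^1_k(G_0,\mathfrak y)\otimes I$ the obstruction therefore comes from $\operatorname{Ext}^1_k(G_0,\mathfrak g)\otimes I$, which acts simply transitively on the set of abstract lifts $G''$ and shifts the obstruction by exactly the corresponding class (\Cref{R:tan_to_obst}). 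One must therefore \emph{change} the abstract lift to make the embedding extend; the lift of $G_A$ into $X_{A'}$ is not canonical and is not unobstructed for free. Your proposal skips this step entirely, and as written the claimed lift $\tilde G\subseteq X_{A'}$ has not been constructed. Once that step is supplied, the rest of your argument goes through as in the paper.
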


\begin{rem} \label{R:1}
	Without the smoothness assumption on $\ker(f)_s$, it is still possible to show that $\ker(f)$ is flat (see \Cref{A:Strong}).  However, the proof is somewhat more technical in that generality and the statement is not necessary for the proof of \Cref{T:ImIsAbSch}.
\end{rem}

The converse of \Cref{T:KerMorph} is false: for example, if $f: E\to E'$ the quotient of an elliptic curve over a field $K$ of characteristic $p$ by a subgroup isomorphic to $\mathbb Z/p\mathbb Z$, one has that $\ker (f)$ is smooth but $\operatorname{Hom}(\ker(f),\mathbb G_a)\ne 0$.  Nevertheless, in general the hypothesis that $\Hom(\ker(f)_s, \mathbb G_a)$ vanish cannot be removed.  Examples~\ref{E:Serre} and~\ref{E:charp} show that \Cref{T:KerMorph} would be false without it.

The following proposition helps to explain when the hypotheses of \Cref{T:KerMorph} hold:

\begin{pro} \label{P:5}
	Let $G$ be a proper, commutative group scheme over an algebraically closed field $k$.  Then $H = \hom(G, \mathbb G_m)$ is reduced if and only if $\hom(G, \mathbb G_a) = 0$.
\end{pro}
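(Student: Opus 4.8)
The plan is to compute the tangent space of the group scheme $H=\mathbf{Hom}(G,\mathbb G_m)$ (what the statement denotes $\hom(G,\mathbb G_m)$) at its identity section, to identify it with $\hom(G,\mathbb G_a)$, and then to argue that $H$ is a \emph{finite} group scheme over $k$, so that reducedness of $H$ is detected entirely by this one tangent space.

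First I would carry out the tangent space computation, which is the heart of the statement. By definition $T_eH=\ker\!\big(H(k[\epsilon])\to H(k)\big)$ is the set of homomorphisms $\chi\colon G_{k[\epsilon]}\to \mathbb G_{m,k[\epsilon]}$ reducing to the trivial character modulo $\epsilon$. Such a $\chi$ is a global unit on $G_{k[\epsilon]}$ of the form $1+\epsilon v$ with $v\in\Gamma(G,\calo_G)$, and, since $\epsilon^2=0$, the multiplicativity condition $m^*\chi=p_1^*\chi\cdot p_2^*\chi$ (with $m$ the group law and $p_i$ the projections of $G\times_k G$) becomes the additivity relation $m^*v=p_1^*v+p_2^*v$. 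Thus $v$ is exactly a homomorphism $G\to\mathbb G_a$, and this yields a natural isomorphism $T_eH\cong\hom(G,\mathbb G_a)$.

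Second I would show that $H$ is finite over $k$; this is where properness of $G$ enters. As $k$ is algebraically closed, hence perfect, $A:=(G^0)_{\mathrm{red}}$ is a smooth connected proper subgroup scheme of $G$, i.e. an abelian variety, and it has the same dimension as $G$. Hence the quotient $F:=G/A$ is proper of dimension $0$, so it is a finite (necessarily flat) commutative group scheme. Applying the left-exact functor $\mathbf{Hom}(-,\mathbb G_m)$ to the short exact sequence $0\to A\to G\to F\to 0$ of fppf sheaves, and invoking the standard vanishing $\mathbf{Hom}(A,\mathbb G_m)=0$ for an abelian variety, I obtain an isomorphism $H\cong \mathbf{Hom}(F,\mathbb G_m)=F^\vee$, the Cartier dual. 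In particular $H$ is representable by a finite group scheme over $k$.

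Finally I would combine the two computations. For a finite group scheme $H$ over an algebraically closed field, all local rings are isomorphic to the one at the identity by translation, so $H$ is reduced if and only if the local ring at $e$ is reduced, if and only if the Artinian local identity component is $\spec k$, if and only if $T_eH=0$. Together with $T_eH\cong\hom(G,\mathbb G_a)$ from the first step, this gives the desired equivalence: $H$ is reduced $\iff T_eH=0\iff \hom(G,\mathbb G_a)=0$. I expect the main obstacle to be the clean execution of the dual-numbers computation—carefully tracking how the group law on $\mathbb G_m$ forces additivity of $v$—together with the justification that $H$ is finite, in particular the sheaf-level vanishing $\mathbf{Hom}(A,\mathbb G_m)=0$ and its interpretation through Cartier duality; the concluding bookkeeping over a finite group scheme is then routine.
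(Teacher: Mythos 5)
Your proposal is correct and follows essentially the same route as the paper's proof: both pass to the reduced identity component (an abelian variety), identify $H$ with the Cartier dual of the finite quotient $G/(G^0)_{\mathrm{red}}$, and reduce the statement to the vanishing of the Lie algebra of that finite group scheme. The only difference is cosmetic---you compute $T_eH\cong\hom(G,\mathbb G_a)$ directly with dual numbers, where the paper cites Mumford's identification of $\hom(F,\mathbb G_a)$ with the Lie algebra of $F^\vee$.
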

\begin{proof}
Let $G^{\ab}$ be the reduced structure on the connected component of the identity in $G$, which is a subgroup scheme because $k$ is algebraically closed.  Since $G^{\ab}$ is proper, reduced, and connected, and since $\mathbb G_m$ is affine, every homomorphism $G \to \mathbb G_m$ factors uniquely through $G/G^{\ab}$.  But $G/G^{\ab}$ is of finite type and has finitely many $k$-points, hence is finite (as in \cite[Remark~2.7.3~(v)]{Brion}), so $H$ is the Cartier dual of $G/G^{\ab}$.

	Since $\mathbb G_a$ is affine, every homomorphism $G \to \mathbb G_a$ also factors uniquely through $G/G^{\ab}$.  But the scheme of homomorphisms $G/G^{\ab}\to \mathbb G_a$ may be identified with the Lie algebra of $H$ \cite[\S14, p.~138]{mumfordAV}, which is trivial if (and only if) $H$ is reduced.
\end{proof}

Granting \Cref{T:KerMorph}, we can complete the proof of \Cref{T:ImIsAbSch}:

\begin{proof}[Proof of \Cref{T:ImIsAbSch}, Part II]
All that remains is the case where $S$ is characteristic $0$, but non-reduced.  We conclude in this case using \Cref{T:KerMorph} and \Cref{P:5}, as all group schemes over a field of characteristic zero are smooth \cite[\href{https://stacks.math.columbia.edu/tag/047N}{Lem.~047N}]{stacks-project}, hence reduced.
\end{proof}

\subsection{Deformation theory of commutative group schemes, after Illusie}\label{S:Prelim-deform}  

Here we review some of the deformation theory of commutative group schemes following \cite{illusie2}.  The summary of Illusie's results in \S\ref{S:8} and the technical material of \S\ref{S:9} and \S\ref{S:Site-Change} will be necessary for the proof of \Cref{T:KerMorph} in \S\ref{S:Pf1}.
 
\subsubsection{The co-Lie complex} \label{S:7}

Recall that given a group scheme $G$ over $S$ that is flat and locally of finite presentation, one defines \cite[VII 3.1.1, p.211]{illusie2} the co-Lie complex $$\ell_G:=\mathrm L\epsilon^*L_{G/S}$$ to be the derived pull-back of the cotangent complex of $G$ over $S$ along the zero section $\epsilon: S\to G$, and the Lie complex  to be $$\ell^\vee_G:=\mathrm R\mathcal Hom (\ell_G,\mathcal O_S).$$ 

Since a group scheme $G$ that is flat over $S$ is always a local complete intersection over $S$ (by \cite[Exp.~VIIB, Cor.~5.5.1]{sga3-1} or \cite[III, \S3, no.~6, p.~346]{DemazureGabriel}), the cotangent complex $L_{G/S}$ has perfect amplitude in $[-1,0]$.  Therefore $\ell_G$ also has perfect amplitude in $[-1,0]$ and $\ell_G^\vee$ has perfect amplitude in $[0,1]$.

If $G$ is smooth over $S$ then $L_{G/S} = \Omega_{G/S}$ is locally free and $\ell_G^\vee$ coincides with the Lie algebra $\mathfrak g$ of $G$.  This is the case in particular if $S$ is the spectrum of a field of characteristic zero, since in that case all group schemes over $S$ are smooth~\cite[\href{https://stacks.math.columbia.edu/tag/047N}{Lem.~047N}]{stacks-project}.

\subsubsection{Small extensions of Artinian rings}
\label{SSS:small}

Infinitesimal deformations of a flat commutative group scheme $G$ that is locally of finite presentation over $S$ are classified in \cite[VII Thm.~4.2.1, p.~239]{illusie2}.  
    The setting for these types of statements is 
a small extension of local Artinian rings with  residue field $k$:
\begin{equation}\label{E:SmExt}
\xymatrix{0\ar[r]& I\ar[r]& R'\ar[r]& R\ar[r]& 0}
\end{equation}
Here $R'\to R$ is a surjective local homomorphism of Artinian local rings with residue field $k$, and $I$ is the kernel of $R'\to R$.  
Recall that if $\mathfrak m$ is the maximal ideal of $R$, then for the extension to be small means that $\mathfrak m \cdot I=0$; this implies that $I$ is naturally a $k$-vector space.    We sometimes restrict, for simplicity, to the situation where $I$ is taken to be a principal ideal; one can always reduce to that case by filtering $I$.  We will typically refer to small extensions by saying  $R' \to R$ is a small extension of an Artinian local ring $R$ with residue field $k$ by a $k$-vector space $I$.

\subsubsection{Illusie's results} \label{S:8}
 
The following theorem is an abbreviated special case of Illusie's main result on deformations of commutative group schemes \cite[VII Thm.~4.2.1, p.~239]{illusie2}:

\begin{teo}[Illusie] \label{T:1}
	Let $R$ be an Artinian local ring with residue field $k$.  Let $R' \to R$ be a small extension of $R$ by a $k$-vector space $I$.  Let $S = \operatorname{Spec} R$.  Let $G$ be a flat, commutative group scheme locally of finite presentation over $R$.  
	\begin{enumerate}[label=(\alph*)]
		\item \label{T:1_i}  There is a natural obstruction in $$\operatorname{Ext}^2_{\mathrm{fpqc}(S)}(G,\ell_G^\vee \mathop\otimes_{\mathcal O_S}^{\mathrm{L}} I)$$ whose vanishing is equivalent to the existence of a flat deformation of $G$ over $R'$.
		\item \label{T:1_ii} If this obstruction vanishes, the set of isomorphism classes of extensions of $G$ forms a torsor under $$\operatorname{Ext}^1_{\mathrm{fpqc}(S)}(G, \ell_G^\vee \mathop\otimes_{\mathcal O_S}^{\mathrm{L}} I).$$
	\end{enumerate}
\end{teo}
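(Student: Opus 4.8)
The plan is to deduce both assertions directly from Illusie's general deformation theorem \cite[VII Thm.~4.2.1]{illusie2}, of which the present statement is the special case arising from a small extension of Artinian local rings. First I would observe that the small extension \eqref{E:SmExt} exhibits $\operatorname{Spec} R \hookrightarrow \operatorname{Spec} R'$ as a square-zero closed immersion with ideal $I$, which is precisely the input data for Illusie's machinery; since the extension is small, $I$ is killed by $\mathfrak m$ and is therefore a $k$-vector space, and one may further reduce to the case of a principal ideal by filtering $I$, as recorded in \S\ref{SSS:small}.

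Next I would match the output. For a flat, locally finitely presented commutative group scheme $G$ over $S = \operatorname{Spec} R$, Illusie's theorem produces an obstruction class, lying in a second Ext group, whose vanishing is equivalent to the existence of a flat deformation of $G$ over $R'$, together with, when that obstruction vanishes, an identification of the isomorphism classes of such deformations with a torsor under the corresponding first Ext group. The content of the specialization is to recognize that Illusie's obstruction and torsor groups are exactly $\operatorname{Ext}^2_{\mathrm{fpqc}(S)}(G, \ell_G^\vee \otimes^{\mathrm{L}}_{\mathcal O_S} I)$ and $\operatorname{Ext}^1_{\mathrm{fpqc}(S)}(G, \ell_G^\vee \otimes^{\mathrm{L}}_{\mathcal O_S} I)$. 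Two translations are involved: first, that the deformation theory of $G$ is governed by the co-Lie complex $\ell_G$, equivalently its dual $\ell_G^\vee$, which by \S\ref{S:7} has perfect amplitude in $[0,1]$; and second, that the coefficients are obtained by the derived tensor product $\ell_G^\vee \otimes^{\mathrm{L}}_{\mathcal O_S} I$, the derived functor being necessary because $I$, as an $R$-module killed by $\mathfrak m$, is generally far from flat.

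The step I expect to demand the most care is this matching of Ext groups: keeping track of the correct site (the fpqc site of $S$), the correct object $G$ in the first argument, and the correct placement of the derived tensor factor $I$ in Illusie's formulation. Because $R$ is Artinian and the extension is small, there are no finiteness or convergence subtleties to impede the comparison, so once the identification of coefficient complexes is in place, assertions \ref{T:1_i} and \ref{T:1_ii} follow immediately from \cite[VII Thm.~4.2.1]{illusie2}.
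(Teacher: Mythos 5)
Your proposal matches the paper exactly: the paper gives no independent proof of this statement, presenting it as an abbreviated special case of \cite[VII Thm.~4.2.1]{illusie2} and relegating the same bookkeeping you describe (the fpqc site, viewing $G$ as a sheaf of $\mathbb Z_S$-modules, and the identification of the coefficient complex $\ell_G^\vee \otimes^{\mathrm L} I$) to the surrounding discussion in \S\ref{S:9}. No gap; this is the intended argument.
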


Illusie also describes the deformation theory of homomorphisms of commutative group schemes \cite[VII Thm.~4.2.3, p.~240]{illusie2}:
\begin{teo}[Illusie] \label{T:2}
	Let $R$ be an Artinian local ring with residue field $k$.  Let $R' \to R$ be a small extension of $R$ by a $k$-vector space $I$.  Let $S = \operatorname{Spec} R$.  Let $F'$ and $G'$ be flat, commutative group schemes of finite presentation over $R'$.  Let $F$ and $G$, respectively, be their restrictions to $R$.  Suppose that $u : F \to G$ is a homomorphism of group schemes over $R$.
	\begin{enumerate}[label=(\alph*)]
		\item \label{T:2_i}  There is a natural obstruction in $$\operatorname{Ext}^1_{\mathrm{fpqc}(S)}(F, \ell_G^\vee \mathop\otimes_{\mathcal O_S}^{\mathrm{L}} I)$$ whose vanishing is equivalent to the existence of an extension of $u$ over $R'$.
		\item  \label{T:2_ii}  If this obstruction vanishes, the set of extensions of $u$ over $R'$ forms a torsor under $$\operatorname{Ext}^0_{\mathrm{fpqc}(S)}(F, \ell_G^\vee \mathop\otimes_{\mathcal O_S}^{\mathrm{L}} I).$$
	\end{enumerate}
\end{teo}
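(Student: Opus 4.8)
The statement is due to Illusie, and the plan is to prove it within his general deformation theory of \cite{illusie2}, as the homomorphism-deformation companion to \Cref{T:1} (note the degree shift: deforming a morphism with its source and target fixed loses one degree relative to deforming the object itself, which is why the obstruction now lives in $\operatorname{Ext}^1$ and the torsor group is $\operatorname{Ext}^0$). First I would reformulate the problem inside the category of abelian sheaves on the fpqc site of $S$: since $F$, $G$, $F'$, $G'$ are commutative group schemes they are abelian sheaves, $u\colon F\to G$ is a morphism of such, and an extension of $u$ over $R'$ is a lift to a homomorphism $u'\colon F'\to G'$ restricting to $u$. Using that $R'\to R$ is small, I would reduce to principal $I$ by filtering, as in \S\ref{SSS:small}, so that $I$ is a one-dimensional $k$-vector space and $\ell_G^\vee\otimes^{\mathrm L}_{\mathcal O_S}I$ is simply a twist of $\ell_G^\vee$.

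Next I would invoke the general cotangent-complex deformation theory of morphisms. For a morphism $u$ of flat objects over $R$ and a square-zero extension with kernel $I$, the obstruction to lifting $u$ with source and target lifts fixed lies in an $\operatorname{Ext}^1$ with coefficients in $I$, and when it vanishes the lifts form a torsor under the corresponding $\operatorname{Hom}$; the coefficient complex is $\mathrm Lu^*L_{G/S}$ viewed on the source. The input specific to group schemes is that this coefficient simplifies by translation invariance of the cotangent complex, $L_{G/S}\cong \mathrm Lp_G^*\ell_G$, so that via $p_F=p_G\circ u$ one gets $\mathrm Lu^*L_{G/S}\cong \mathrm Lp_F^*\ell_G$, the constant pullback of the co-Lie complex; dualizing replaces $\ell_G$ by $\ell_G^\vee$ as coefficient object.

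The decisive point, which I expect to be the main obstacle, is to pass from this scheme-morphism theory, whose groups are module $\operatorname{Ext}$ over $\mathcal O_F$, to the abelian-sheaf groups $\operatorname{Ext}^i_{\mathrm{fpqc}(S)}(F,\ell_G^\vee\otimes^{\mathrm L}_{\mathcal O_S}I)$ appearing in the statement. The two differ precisely because we deform $u$ not as a bare morphism but as a \emph{homomorphism}: imposing compatibility with the group laws is exactly what converts the cohomology of the space $F$ into $\operatorname{Ext}$ in the category of abelian sheaves. I would establish this comparison through Illusie's cotangent complex of abelian group objects, using that the group structure furnishes a bar-type cosimplicial resolution whose associated spectral sequence computes $\operatorname{Ext}^i_{\mathrm{fpqc}(S)}(F,-)$ and matches the additive part of the module-theoretic obstruction and torsor groups; checking that the zero section and the $\mathrm L\epsilon^*$ defining $\ell_G$ are compatible with these identifications, and that the module-theoretic obstruction class is the image of a genuine abelian-sheaf class, is the technical heart. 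With the comparison in hand, parts \ref{T:2_i} and \ref{T:2_ii} follow by reading off degrees, everything else being bookkeeping with the long exact sequences attached to the small extension.
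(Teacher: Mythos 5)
The paper does not prove this statement: it is quoted, in abbreviated form, directly from Illusie \cite[VII Thm.~4.2.3, p.~240]{illusie2}, so there is no in-paper argument to compare yours against. Your outline does track the broad architecture of Illusie's actual proof --- the degree shift relative to \Cref{T:1}, the reduction to principal $I$ by filtering, the translation-invariance isomorphism $L_{G/S} \cong \mathrm Lp_G^*\ell_G$ and hence $\mathrm Lu^*L_{G/S} \cong \mathrm Lp_F^*\ell_G$, and the fact that the group structure is what upgrades cohomology of the scheme $F$ to $\operatorname{Ext}$ in the category of abelian fpqc sheaves.

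As a proof, however, the proposal has a genuine gap: the step you label ``the technical heart'' and then defer \emph{is} the theorem. Concretely, three things are asserted without argument. First, the existence and exactness properties of the canonical resolution of the abelian sheaf $F$ by sums of $\mathbb Z[F^r]$ (this is the Breen/Bertolin--Tatar-type complex the paper recalls in \S\ref{S:3}, with its spectral sequence \eqref{E:3} converting $\operatorname{Ext}^p(\mathbb Z[F^r],-) = H^p(F^r,-)$ into $\operatorname{Ext}^{p+q}_{\mathrm{fpqc}(S)}(F,-)$). Second, and more seriously, the claim that the module-theoretic obstruction class in $\operatorname{Ext}^1_{\mathcal O_F}(\mathrm Lu^*L_{G/S}, I\otimes\mathcal O_F) \cong H^1(F,\ell_G^\vee\otimes I)$, which only governs lifting $u$ as a morphism of schemes, refines to a class in $\operatorname{Ext}^1_{\mathrm{fpqc}(S)}(F,\ell_G^\vee\otimes^{\mathrm L}I)$ whose vanishing is equivalent to the existence of a lift that is a \emph{homomorphism}: this requires verifying cocycle compatibilities of the classes pulled back along the multiplication and projection maps on $F^2$, $F^3$, and is not a formal consequence of the spectral sequence. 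Third, the torsor statement in \ref{T:2_ii} must be checked at the level of homomorphisms, not merely morphisms, which again uses the same compatibilities. None of this is ``bookkeeping with long exact sequences''; it occupies the bulk of Illusie's Chapter VII. The paper's choice to cite rather than reprove is the reasonable one here, and a self-contained proof along your lines would need to actually carry out the comparison you postpone.
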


\begin{rem}\label{R:tan_to_obst} 
In \Cref{T:2}, suppose that $\alpha \in \operatorname{Ext}^1_{\mathrm{fpqc}(S)}(F, \ell_G^\vee \mathop\otimes^{\mathrm{L}} I)$ is the obstruction to lifting $u:F\to G$ to a morphism $u':F'\to G'$, and $F''$ is a second lifting of $F$ to $S'$, that differs from the lifting $F'$ by the element $\beta \in \operatorname{Ext}^1_{\mathrm{fpqc}(S)}(F, \ell_F^\vee \mathop\otimes^{\mathrm{L}} I)$, under the identification  coming from  \Cref{T:1}~\ref{T:1_ii}. Then the obstruction to lifting $u:F\to G$ to a morphism $u'':F''\to G'$ is given by $\alpha-[\beta]$, where $[\beta]$ is the image of $\beta$ under the natural morphism $$ \operatorname{Ext}^1_{\mathrm{fpqc}(S)}(F, \ell_F^\vee \mathop\otimes^{\mathrm{L}} I) \to  \operatorname{Ext}^1_{\mathrm{fpqc}(S)}(F, \ell_G^\vee \mathop\otimes^{\mathrm{L}} I).$$
\end{rem}

\subsubsection{Setting} \label{S:9}

Some care is needed with respect to the categories in which extensions are taken in Theorems~\ref{T:1} and~\ref{T:2}:  

\begin{enumerate}
	\item 
Illusie
works over the fpqc site of schemes over $S=\operatorname{Spec} R$,
and defines $\underline \ell_G$ to be the derived pull-back of
$\ell_G$ to this site \cite[VII (4.1.2.2), p.~230]{illusie2}.  
   We have not distinguished notationally between $\ell_G$ and $\underline\ell_G$ here.

\item 
Illusie views the commutative group scheme
$G$ over $S$, via its functor of points, as a module over the 
constant sheaf of rings $A=\mathbb Z_S$ \cite[\S5.1]{illusie_conf_proc}.   The deformation and
obstruction spaces are then identified with the groups
$\operatorname{Ext}_A^i(G,\underline \ell_G^\vee \otimes^L_{\mathcal
  O_S}I)$,  for $i=0,1,2$, where the extensions are
taken in the derived category of sheaves of $A$-modules over the fpqc
site of schemes over $S$.  We have suppressed $A$ from the notation
but we have included a subscript $\mathrm{fpqc}(S)$ to emphasize that
extensions are taken in the derived  category of fpqc sheaves of abelian groups.

\item 
	As was noted earlier, when $G$ is smooth over $S$, we can identify $\ell_G^\vee$ with the Lie algebra $\mathfrak g$ of $G$, which is a locally free $\mathcal O_S$-module.  The obstruction and deformation spaces arising in Theorems~\ref{T:1} and~\ref{T:2} can therefore be written
		$\operatorname{Ext}^i_{\mathrm{fpqc}(S)}(G, \mathfrak g \otimes_R I)$.
    \end{enumerate}

\subsubsection{Changing sites and identifying tangent-obstruction spaces}\label{S:Site-Change}
The tangent and obstruction spaces in Illusie's theorems can be identified more concretely in the cases we are considering.  In this direction, a primary purpose of this section is to prove \Cref{L:R2gg0}.

\begin{pro} \label{P:4}
	Let $\sigma$ and $\tau$ be the categories of sheaves in two Grothendieck topologies.  Let $\varepsilon^\ast : \tau \to \sigma$ be a left exact
	 functor with right adjoint $\varepsilon_\ast$.\footnote{In other words, $\epsilon : \sigma \to \tau$ is a morphism of topoi \cite[\href{https://stacks.math.columbia.edu/tag/00X9}{Section 00X9}]{stacks-project}.}
 	Suppose that $G$ is a sheaf of abelian groups on $\tau$ and $F$ is a bounded-below complex of abelian groups on $\sigma$ whose cohomology groups are acyclic with respect to $\varepsilon_\ast$.  Then the natural homomorphism
	\begin{equation*}
		\operatorname{Ext}^p(G, \varepsilon_\ast F) \to \operatorname{Ext}^p(\varepsilon^\ast G, F)
	\end{equation*}
	is an isomorphism.
\end{pro}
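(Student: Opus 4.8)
The plan is to deduce the isomorphism from the derived adjunction between $\varepsilon^\ast$ and $\varepsilon_\ast$, invoking the acyclicity hypothesis only at the very end in order to replace the derived pushforward of $F$ by its naive termwise pushforward. First I would record the two structural inputs supplied by the hypotheses. Since $\varepsilon^\ast$ is a left adjoint it is automatically right exact, and it is left exact by assumption, hence exact; consequently its right adjoint $\varepsilon_\ast$ carries injective sheaves on $\sigma$ to injective sheaves on $\tau$ (a right adjoint to an exact functor preserves injectives). Moreover $\varepsilon_\ast$, being a right adjoint, is left exact. The plain adjunction then gives a natural isomorphism of functors $\operatorname{Hom}_\sigma(\varepsilon^\ast G, -) \cong \operatorname{Hom}_\tau(G, \varepsilon_\ast(-))$, which is the engine of the whole argument.

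Next I would establish the derived form of the adjunction, with no acyclicity needed. Choose a bounded-below injective resolution $F \to K^\bullet$ in the category of sheaves on $\sigma$, so that by definition $\operatorname{Ext}^p_\sigma(\varepsilon^\ast G, F) = H^p \operatorname{Hom}_\sigma(\varepsilon^\ast G, K^\bullet)$. Applying the adjunction termwise yields $\operatorname{Hom}_\sigma(\varepsilon^\ast G, K^\bullet) \cong \operatorname{Hom}_\tau(G, \varepsilon_\ast K^\bullet)$. Because $\varepsilon_\ast$ preserves injectives, $\varepsilon_\ast K^\bullet$ is a bounded-below complex of injectives on $\tau$ that represents $\mathbf{R}\varepsilon_\ast F$; hence $H^p \operatorname{Hom}_\tau(G, \varepsilon_\ast K^\bullet) = \operatorname{Ext}^p_\tau(G, \mathbf{R}\varepsilon_\ast F)$. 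This produces a natural isomorphism $\operatorname{Ext}^p_\sigma(\varepsilon^\ast G, F) \cong \operatorname{Ext}^p_\tau(G, \mathbf{R}\varepsilon_\ast F)$, and I would check that it is compatible with the map in the statement, which is precisely the composite of the map induced by the canonical morphism $\varepsilon_\ast F \to \mathbf{R}\varepsilon_\ast F$ with this isomorphism.

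It then remains to show that the canonical morphism $\varepsilon_\ast F \to \mathbf{R}\varepsilon_\ast F$ is a quasi-isomorphism, for then applying $\operatorname{Ext}^p_\tau(G, -)$ concludes the proof. This is the step that uses the hypothesis and is the \emph{main obstacle}. The hypercohomology spectral sequence $E_2^{p,q} = R^p\varepsilon_\ast \mathcal{H}^q(F) \Rightarrow H^{p+q}(\mathbf{R}\varepsilon_\ast F)$ collapses, since $R^p\varepsilon_\ast \mathcal{H}^q(F) = 0$ for $p>0$ by the $\varepsilon_\ast$-acyclicity of the cohomology sheaves, giving $H^n(\mathbf{R}\varepsilon_\ast F) \cong \varepsilon_\ast \mathcal{H}^n(F)$. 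The delicate point is to match this with the cohomology $H^n(\varepsilon_\ast F)$ of the naive complex, since $\varepsilon_\ast$ is only left exact; I would handle it by dévissage along the canonical filtration of $F$, reducing to the case of a single $\varepsilon_\ast$-acyclic sheaf concentrated in one degree, where the comparison is immediate. In that single-sheaf case the proposition can in fact be obtained directly from the Grothendieck spectral sequence for the composite $\operatorname{Hom}_\tau(G,-) \circ \varepsilon_\ast$, which degenerates because $\varepsilon_\ast$ sends injectives to $\operatorname{Hom}_\tau(G,-)$-acyclic objects. Throughout, the boundedness below of $F$ guarantees convergence of the spectral sequences and the validity of the filtration argument.
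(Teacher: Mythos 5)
Your argument follows the same route as the paper's proof: $\varepsilon^\ast$ is exact because it is a left-exact left adjoint, hence $\varepsilon_\ast$ preserves injectives; applying the adjunction termwise to an injective resolution $F \to K^\bullet$ gives a natural isomorphism $\operatorname{Ext}^p(\varepsilon^\ast G, F) \cong \operatorname{Ext}^p(G, \mathrm R\varepsilon_\ast F)$; and everything then rests on the canonical map $\varepsilon_\ast F \to \mathrm R\varepsilon_\ast F$ being a quasi-isomorphism. Up to that point your proof is correct and coincides with the paper's. You are also right to isolate the comparison of $H^n(\varepsilon_\ast F)$ with $\varepsilon_\ast H^n(F)$ as the delicate step: the paper's own proof passes over it, asserting the quasi-isomorphism directly from the degeneration of the spectral sequence $R^p\varepsilon_\ast H^q(F) \Rightarrow R^{p+q}\varepsilon_\ast F$, which computes the cohomology of $\mathrm R\varepsilon_\ast F$ but says nothing by itself about the naive complex $\varepsilon_\ast F$.

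The gap is in your proposed fix. D\'evissage along the canonical truncations does not close it: although $\varepsilon_\ast$ commutes with $\tau_{\leq n}$ by left exactness, the exact sequences $0 \to \tau_{\leq n-1}F \to \tau_{\leq n}F \to Q_n \to 0$ fail to remain exact after the underived $\varepsilon_\ast$, and the failure is governed by $R^1\varepsilon_\ast$ of the coboundary sheaves $\operatorname{im}(d^{n-1})$, which the hypotheses do not make acyclic. Indeed, under the literal hypotheses the quasi-isomorphism can fail: take $F = [Z \to I \to I/Z]$ an exact three-term complex with $I$ injective and $R^1\varepsilon_\ast Z \neq 0$. Every cohomology sheaf of $F$ vanishes, hence is trivially $\varepsilon_\ast$-acyclic, yet $H^2(\varepsilon_\ast F) \cong \operatorname{coker}\bigl(\varepsilon_\ast I \to \varepsilon_\ast(I/Z)\bigr) \cong R^1\varepsilon_\ast Z \neq 0$ while $\mathrm R\varepsilon_\ast F \simeq 0$. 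Some strengthening of the hypothesis is therefore needed to finish --- for instance that the individual terms (equivalently, the cocycle and coboundary subsheaves) of $F$ are $\varepsilon_\ast$-acyclic, or that $\varepsilon_\ast$ is exact --- after which the comparison is standard and your argument closes. This defect is inherited from, not worse than, the paper's own write-up, and in the paper's applications (\Cref{C:4}, \Cref{C:5}, \Cref{L:R2gg0}) the complexes in question do satisfy such a stronger condition.
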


\begin{proof}
A functor with a right adjoint is right exact, so, when combined with the left exactness of $\epsilon^*$, the existence of a right adjoint of $\varepsilon^\ast$ implies that $\varepsilon^\ast$ is exact.
Since $\varepsilon^*$ is exact,  $\mathrm L\varepsilon^*= \varepsilon^*$, and consequently, $\varepsilon^*$ has a right adjoint $\mathrm R\varepsilon_*$ in the derived category  \cite[\href{https://stacks.math.columbia.edu/tag/09T5}{Lemma 09T5}]{stacks-project}. This gives us
$$
\mathrm R\operatorname{Hom}(\varepsilon^\ast G, F) = \mathrm R \operatorname{Hom}(G, \mathrm R\varepsilon_\ast F).
$$
Applying cohomology to these complexes, we obtain the assertion of \Cref{P:4}, provided we show that $R\varepsilon_\ast F$ is quasi-isomorphic to $\varepsilon_\ast F$.

	For this, choose a quasi-isomorphism $F \xrightarrow\sim I$ where $I$ is a bounded-below complex of injective sheaves of abelian groups on $\mathrm{fpqc}(S)$.  Then $\varepsilon_\ast I^p$ is injective for each $p$ because $\varepsilon_\ast$ has the exact left adjoint $\varepsilon^\ast$ (see  \cite[\href{https://stacks.math.columbia.edu/tag/015Z}{Lemma 015Z}]{stacks-project}). Furthermore $\mathrm R^p \varepsilon_\ast H^q(F) = 0$ for $p > 0$ and all $q$, by assumption.  Therefore the spectral sequence 
	\begin{equation*}
		E^{p,q}_2= \mathrm R^p \varepsilon_\ast H^q (F) \Rightarrow \mathrm R^{p+q} \varepsilon_\ast F
	\end{equation*}
	degenerates at the $E_2$ page to $\varepsilon_\ast H^\bullet(F)$.  
	Therefore the morphism $\varepsilon_\ast F \to \varepsilon_\ast I = \mathrm R \varepsilon_\ast F$ is a quasi-isomorphism, so $\varepsilon_\ast F$ is quasi-isomorphic to $\mathrm R \varepsilon_\ast F = \varepsilon_\ast I$.  
\end{proof}

We may apply this to $\sigma = \mathrm{fppf}(S)$ and $\tau = \mathrm{fpqc}(S)$:

\begin{cor}[Comparing fpqc and fppf Ext groups] \label{C:4}
	Let $G$ be a commutative group scheme over a scheme $S$.  Let $F$ be a bounded below complex of sheaves on $\mathrm{fpqc}(S)$ with quasicoherent cohomology.  Then $\operatorname{Ext}^p_{\mathrm{fpqc}(S)}(G, F) = \operatorname{Ext}^p_{\mathrm{fppf}(S)}(G,F)$. \qed
\end{cor}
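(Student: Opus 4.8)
The plan is to deduce the corollary from \Cref{P:4} applied to the comparison of the fppf and fpqc topologies on the category of $S$-schemes. Since a morphism locally of finite presentation is quasi-compact, every fppf covering is an fpqc covering, so the fpqc topology is finer than the fppf topology; consequently every fpqc sheaf is automatically an fppf sheaf. This gives a morphism of topoi $\varepsilon\colon \mathrm{fpqc}(S)\to\mathrm{fppf}(S)$ whose direct image $\varepsilon_\ast$ is the inclusion of fpqc sheaves among fppf sheaves and whose inverse image $\varepsilon^\ast$ is fpqc-sheafification; in the notation of \Cref{P:4} this is the case $\sigma=\mathrm{fpqc}(S)$ (the finer topology) and $\tau=\mathrm{fppf}(S)$ (the coarser one). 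The functor $\varepsilon^\ast$ is exact, in particular left exact, and $\varepsilon_\ast$ is its right adjoint by the universal property of sheafification, so the hypotheses of \Cref{P:4} on the pair $(\varepsilon^\ast,\varepsilon_\ast)$ are satisfied.

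Next I would identify the inputs and outputs. Take $G$ to be the group scheme of the statement, regarded as a sheaf of abelian groups on $\tau=\mathrm{fppf}(S)$, and take $F$ to be the given complex on $\sigma=\mathrm{fpqc}(S)$. Because $G$ is representable it already satisfies fpqc descent and is therefore an fpqc sheaf, so $\varepsilon^\ast G=G$; and since $\varepsilon_\ast$ is the inclusion, $\varepsilon_\ast F$ is just $F$ viewed as a complex of fppf sheaves. With these identifications the isomorphism furnished by \Cref{P:4},
\begin{equation*}
\operatorname{Ext}^p_{\mathrm{fppf}(S)}(G,\varepsilon_\ast F)\xrightarrow{\ \sim\ }\operatorname{Ext}^p_{\mathrm{fpqc}(S)}(\varepsilon^\ast G, F),
\end{equation*}
becomes exactly the asserted equality $\operatorname{Ext}^p_{\mathrm{fppf}(S)}(G,F)=\operatorname{Ext}^p_{\mathrm{fpqc}(S)}(G,F)$.

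The one hypothesis of \Cref{P:4} requiring genuine input — and the step I expect to be the main obstacle — is that the cohomology sheaves of $F$ be acyclic with respect to $\varepsilon_\ast$, i.e.\ that $\mathrm R^p\varepsilon_\ast H^q(F)=0$ for all $p>0$. By hypothesis the $H^q(F)$ are quasicoherent, so it suffices to show that a quasicoherent sheaf $Q$ is $\varepsilon_\ast$-acyclic. For this I would use the identification of $\mathrm R^p\varepsilon_\ast Q$ with the fppf-sheafification of the presheaf $U\mapsto H^p_{\mathrm{fpqc}}(U,Q)$ (resolve $Q$ by fpqc-injectives and apply $\varepsilon_\ast$); since every scheme is covered by affine opens, the vanishing of this sheafification for $p>0$ reduces to showing $H^p_{\mathrm{fpqc}}(U,Q)=0$ for $p>0$ whenever $U$ is affine. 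This is precisely the statement that the fpqc cohomology of a quasicoherent sheaf agrees with its Zariski cohomology, which vanishes in positive degree on an affine — that is, Grothendieck's faithfully flat descent (exactness of the Amitsur/\v{C}ech complex of a faithfully flat ring map). Granting this acyclicity, \Cref{P:4} applies and the corollary follows; every other step above is formal bookkeeping about the two topologies.
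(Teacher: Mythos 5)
Your proof is correct and follows the route the paper intends: \Cref{C:4} carries no written proof precisely because it is meant to be the specialization of \Cref{P:4} to the fppf/fpqc comparison, and you have supplied the two things that specialization genuinely requires --- identifying the adjoint pair (fpqc-sheafification as $\varepsilon^\ast$, with the inclusion of fpqc sheaves into fppf sheaves as its right adjoint $\varepsilon_\ast$) and verifying the $\varepsilon_\ast$-acyclicity of quasicoherent sheaves via the vanishing of positive-degree fpqc cohomology on affines. Two remarks. First, your assignment $\sigma = \mathrm{fpqc}(S)$, $\tau = \mathrm{fppf}(S)$ is the one under which the hypotheses of \Cref{P:4} actually hold and under which $F$ lives on $\sigma$ as the corollary requires; the sentence in the paper immediately preceding the corollary states the opposite assignment, which is a slip, so the discrepancy is to your credit rather than against you. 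Second, your stated reason that fpqc refines fppf is false as written: a morphism locally of finite presentation need not be quasi-compact. The fact you actually need --- every fppf covering is an fpqc covering --- is nonetheless true, but for a different reason: flat morphisms locally of finite presentation are open, so each affine open of the target is covered by the images of finitely many quasi-compact opens of the source, which is exactly the condition appearing in the definition of an fpqc covering. With that justification repaired, every step of your argument stands.
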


We will therefore drop the subscript $\mathrm{fpqc}$ on $\operatorname{Ext}$ in the future.  We may also apply \Cref{P:4} to a closed embedding:

\begin{cor} \label{C:5}
	Let $G$ be a commutative group scheme over a scheme $S$ and let $\varepsilon : S_0 \to S$ be a closed embedding.  Let $F$ be a bounded below complex of sheaves on $\mathrm{fpqc}(S_0)$ with quasicoherent cohomology.  Then $\operatorname{Ext}^p_{S_0}(\varepsilon^\ast G, F) = \operatorname{Ext}^p_S(G, \varepsilon_\ast F)$. \qed
\end{cor}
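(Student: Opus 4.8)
The plan is to deduce \Cref{C:5} as an immediate instance of \Cref{P:4}, applied to the morphism of topoi induced by the closed embedding $\varepsilon$. Concretely, I would take $\tau = \mathrm{fpqc}(S)$ and $\sigma = \mathrm{fpqc}(S_0)$, and let $\varepsilon^* \colon \mathrm{fpqc}(S) \to \mathrm{fpqc}(S_0)$ be the pullback functor, with right adjoint $\varepsilon_* \colon \mathrm{fpqc}(S_0) \to \mathrm{fpqc}(S)$. A closed embedding of schemes induces a morphism of fpqc topoi in the direction $\mathrm{fpqc}(S_0) \to \mathrm{fpqc}(S)$, so $\varepsilon^*$ is left exact and has $\varepsilon_*$ as its right adjoint, which is precisely the shape of hypothesis required by \Cref{P:4}. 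The commutative group scheme $G$ over $S$ is a sheaf of abelian groups on $\tau$, and $F$ is the given bounded-below complex on $\sigma$; so the only remaining hypothesis to check is that the cohomology sheaves of $F$ are acyclic for $\varepsilon_*$.

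That acyclicity is the one point that genuinely needs an argument, and it is where I would concentrate. By hypothesis the cohomology sheaves $H^q(F)$ are quasicoherent, so it suffices to show that $\mathrm R^p\varepsilon_* \mathcal F = 0$ for $p > 0$ whenever $\mathcal F$ is quasicoherent. Here I would use that a closed embedding is an affine morphism, together with the fact that the fpqc cohomology of a quasicoherent sheaf agrees with its Zariski cohomology; the vanishing of the higher direct images of a quasicoherent sheaf along an affine morphism then gives the claim. In other words, $\varepsilon_*$ is exact on quasicoherent sheaves, so every $H^q(F)$ is $\varepsilon_*$-acyclic.

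With the hypotheses verified, \Cref{P:4} yields that the natural map $\operatorname{Ext}^p_S(G, \varepsilon_* F) \to \operatorname{Ext}^p_{S_0}(\varepsilon^* G, F)$ is an isomorphism, which is exactly the assertion of \Cref{C:5}. I expect no serious obstacle: the statement is designed to be a formal corollary of \Cref{P:4}, parallel to the way \Cref{C:4} specializes it to the change-of-topology map $\mathrm{fppf}(S) \to \mathrm{fpqc}(S)$. The only place where care is warranted is the acyclicity step, and even there the work reduces to the standard vanishing of higher direct images of quasicoherent sheaves under an affine morphism, so there is nothing deep to prove beyond invoking that fact.
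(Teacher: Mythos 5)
Your proposal is correct and matches the paper's route exactly: \Cref{C:5} is stated with a \qed precisely because it is meant to be an immediate application of \Cref{P:4} to the morphism of topoi induced by the closed embedding, which is what you do. Your explicit verification of the $\varepsilon_*$-acyclicity of the quasicoherent cohomology sheaves (via affineness of a closed immersion and the agreement of fpqc and Zariski cohomology for quasicoherent sheaves) is the one hypothesis the paper leaves implicit, and you handle it correctly.
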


More specifically:

\begin{cor}[Reduction to Ext groups on the central fiber] \label{L:R2gg0}
	Let $0\to I\to R'\to R\to 0$ be a small extension of an Artinian local ring $(R,\mathfrak m)$ with algebraically closed residue field $k$, and let $G_R$ be a flat, commutative group scheme of finite presentation over $S= \operatorname{Spec}R$.  Let $S_0 = \operatorname{Spec} k$, and write $G_0$ for the restriction of $G$ to $S_0$.
   Then there is a natural isomorphism
$$
	\operatorname{Ext}_S^i(G_R,\ell_{G_R}^\vee\mathop\otimes_RI) \longrightarrow \operatorname{Ext}^i_{S_0}(G_0,\ell_{G_0}^\vee) \mathop\otimes_k I
$$
for all integers $i\geq 0$.
  In particular, if $G_0$ is smooth with Lie algebra $\mathfrak g_0$, then 
$$
	\operatorname{Ext}^i_S(G_R,\mathfrak g_R\mathop\otimes_RI) \longrightarrow \operatorname{Ext}^i_{S_0}(G_0,\mathfrak g_0)\mathop\otimes_k I
$$
is an isomorphism for all integers $i\geq 0$.
\end{cor}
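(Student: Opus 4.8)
The plan is to transport the computation to the central fibre $S_0 = \operatorname{Spec} k$ by means of \Cref{C:5}, once the coefficient complex $\ell_{G_R}^\vee \otimes_R I$ has been recognised as a pushforward from $S_0$. The starting observation is that, since the extension $R' \to R$ is small, we have $\mathfrak m I = 0$, so $I$ is a module over $k = R/\mathfrak m$; being a finitely generated module over the Artinian ring $R$, it is in fact a finite-dimensional $k$-vector space. I write $\varepsilon \colon S_0 \hookrightarrow S$ for the closed immersion, so that $\varepsilon^\ast G_R = G_0$.

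The first step is to fix a free model for the co-Lie complex. Because $G_R$ is flat and of finite presentation over the Artinian local ring $R$, its co-Lie complex $\ell_{G_R}$ has perfect amplitude in $[-1,0]$ (as recorded in \S\ref{S:7}), and hence may be represented by a bounded complex of finite free $R$-modules; dually $\ell_{G_R}^\vee$ is represented by a finite free complex in degrees $[0,1]$. Two consequences follow for free. First, all the tensor products below are automatically derived, since free modules are flat, so there is no discrepancy between $\otimes_R$ and $\otimes^{\mathrm L}_R$. Second, formation of $\ell_{G_R}$ and of its dual commutes with the base change $\varepsilon$: by compatibility of the (co-)Lie complex with base change one has $\mathrm L\varepsilon^\ast \ell_{G_R} = \ell_{G_0}$, and dualising a perfect complex commutes with derived pullback, so $\mathrm L\varepsilon^\ast \ell_{G_R}^\vee = \ell_{G_0}^\vee$. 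Concretely, tensoring the free representative with $k$ computes $\ell_{G_0}^\vee = \ell_{G_R}^\vee \otimes_R k$.

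The second step is the key identification of coefficients. Each term of $\ell_{G_R}^\vee \otimes_R I$ is annihilated by $\mathfrak m$, hence is a $k$-module, so the whole complex is the pushforward $\varepsilon_\ast$ of a complex on $S_0$; using the free representative together with $\ell_{G_0}^\vee = \ell_{G_R}^\vee \otimes_R k$ and $I = k \otimes_k I$, this complex is exactly $\ell_{G_0}^\vee \otimes_k I$. In other words $\ell_{G_R}^\vee \otimes_R I = \varepsilon_\ast\!\left( \ell_{G_0}^\vee \otimes_k I \right)$, a bounded complex with coherent, hence quasicoherent, cohomology on $S_0$. This is precisely the shape of coefficient to which \Cref{C:5} applies.

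The final step is to invoke \Cref{C:5} with $G = G_R$ and $F = \ell_{G_0}^\vee \otimes_k I$, giving a natural isomorphism $\operatorname{Ext}^i_S(G_R, \ell_{G_R}^\vee \otimes_R I) \cong \operatorname{Ext}^i_{S_0}(G_0, \ell_{G_0}^\vee \otimes_k I)$. Since $I$ is a finite-dimensional $k$-vector space, $\ell_{G_0}^\vee \otimes_k I$ is a finite direct sum of copies of $\ell_{G_0}^\vee$, and $\operatorname{Ext}^i_{S_0}(G_0, -)$ is $k$-linear and commutes with finite direct sums, so the right-hand side is $\operatorname{Ext}^i_{S_0}(G_0, \ell_{G_0}^\vee) \otimes_k I$; naturality is inherited from that of \Cref{C:5}. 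Assembling these identifications yields the asserted isomorphism for every $i \ge 0$, and the ``in particular'' statement is the special case in which $G_0$ is smooth, where $\ell_{G_0}^\vee$ is the locally free Lie algebra $\mathfrak g_0$ and $\ell_{G_R}^\vee = \mathfrak g_R$. The main obstacle I anticipate is the second step: one must be careful that duality and base change for the co-Lie complex are applied on the fpqc site and in the correct derived sense, and the clean way to handle this is exactly the reduction to a finite free representative over the Artinian local ring, which makes the base-change manipulations elementary module algebra.
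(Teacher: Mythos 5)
Your proposal is correct and follows essentially the same route as the paper: represent $\ell_{G_R}$ by a finite free two-term complex over the Artinian local ring, observe that $\ell_{G_R}^\vee \otimes_R I = \varepsilon_\ast(\ell_{G_0}^\vee \otimes_k I)$ because $I$ is a $k$-vector space, apply \Cref{C:5}, and finish by additivity of $\operatorname{Ext}^i$ over the finite-dimensional $I$. The only difference is that you make explicit the base-change compatibility $\ell_{G_R}^\vee \otimes_R k = \ell_{G_0}^\vee$, which the paper uses implicitly.
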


\begin{proof}
	Since $G_0$ is a local complete intersection over $k$ (by \cite[Exp.~VIIB, Cor.~5.5.1, p.~562]{sga3-1} or \cite[III, \S3, no.~6, Thm.~6.1, p.~346]{DemazureGabriel}), the complex $\ell_{G_R}$ is perfect in $[-1,0]$.  As $R$ is local, we can therefore represent $\ell_R$ by a $2$-term complex of free $R$-modules of finite rank.  We identify $\ell_{G_R}$ with such a representation.  Since the $R$-module structure on $I$ is induced from a $k$-vector space structure, we therefore have
	\begin{equation*}
		\ell_{G_R}^\vee \mathop\otimes^{\mathrm L}_R I = \ell_{G_R}^\vee \mathop\otimes_R I = \varepsilon_\ast (\ell_{G_0}^\vee \mathop\otimes_k I),
	\end{equation*}
	where $\varepsilon : S_0 \to S$ is the inclusion.

	By \Cref{C:5}, we therefore have
	\begin{equation*}
		\operatorname{Ext}^i_S(G_R, \ell_{G_R}^\vee \mathop\otimes_R I) = \operatorname{Ext}^i(G_0, \ell_{G_0}^\vee \mathop\otimes_k I ).
	\end{equation*}
	To conclude, we note that $I$ is a finite-dimensional $k$-vector space, so by the additivity of $\operatorname{Ext}^i$, 
	\begin{equation*}
		\operatorname{Ext}^i_{S_0}(G_0, \ell_{G_0}^\vee \mathop\otimes_k I ) = \operatorname{Ext}^i_{S_0}(G_0, \ell_{G_0}^\vee) \mathop\otimes_k I .
	\end{equation*}
\end{proof}

When Theorems~\ref{T:1} and~\ref{T:2} are applied in the setting of \Cref{L:R2gg0}, we can therefore replace $\operatorname{Ext}^i_{\mathrm{fpqc}(S)}(G, \ell_G^\vee \otimes^{\mathrm L} I)$ and $\operatorname{Ext}^i_{\mathrm{fpqc}(S)}(F, \ell_G^\vee \otimes^{\mathrm L} I)$ in the statements by $\operatorname{Ext}^i_{S_0}(G_0, \ell_{G_0}^\vee \otimes I)$ and $\operatorname{Ext}^i_{S_0}(F_0, \ell_{G_0}^\vee \otimes I)$, respectively, where $F_0$ and $G_0$ denote the restrictions of $F$ and $G$ to $S_0$.

\subsection{Proof of \Cref{T:KerMorph}} \label{S:Pf1}

As explained in \Cref{L:R2Artin}, it suffices to prove \Cref{T:KerMorph} in the case where $S=\operatorname{Spec}R$ is the spectrum of an Artinian local ring $(R,\mathfrak m)$ with algebraically closed residue field $k$. 
  Let $f_0:X_0\to Y_0$ be the restriction to the central fiber, with kernel $G_0=\ker(f_0)$, which is assumed to be smooth. We will show that $\ker(f)$ is smooth over $R$.
  
 Inductively, it suffices to consider the following situation.  Let $0\to I\to R'\to R\to 0$ be a small extension of an Artinian local ring $(R,\mathfrak m)$ with algebraically closed residue field $k$, and let $S' = \spec R'$.
 Let $f':X'\to Y'$ be a morphism of abelian schemes over $R'$ with kernel $\ker(f')$, restricting to $f:X\to Y$ over $R$ and to $f_0:X_0\to Y_0$ over the residue field $k$.  Assuming that $\ker(f)$ is smooth over $R$, we must show that $\ker(f')$ is smooth over $R'$.  Note that since we are working over an Artinian local ring, it suffices to show that $\ker(f')$ is flat, since the only geometric fiber $\ker(f_0)$ is assumed to be smooth.  
 
For brevity, set $G'=\ker(f')$, $G=\ker (f)$, and $G_0=\ker(f_0)$.  We argue first that $G$ extends to a smooth, proper group scheme over $S'$. 
Note that by \Cref{T:1}~\ref{T:1_i} and \Cref{L:R2gg0}, obstructions to extending $G$ to a flat commutative group scheme $G''$ over $S'$ lie in $\operatorname{Ext}^2_k(G_0, \mathbb G_a) \otimes_k \mathfrak g \otimes_k I$.
 This group vanishes since $G_0$ is a smooth proper group scheme over a field (see \Cref{P:13}), however, we prefer to give a direct proof here of the existence of the extension.  
 Since $G$ is smooth and proper, it is an extension of a finite \'etale group scheme $B$ over $S$ by an abelian scheme $A$ over $S$.  The deformations of $B$ are unobstructed because $B$ is \'etale  ($B$ is a disjoint union of copies of the base) and the deformations of $A$ are unobstructed because $A$ is an abelian scheme \cite[Thm.~(2.2.1), p.~273]{oortFGS}.  Therefore $A$ lifts to an abelian scheme $A'$ over $S'$ and $B$ lifts to a finite \'etale group scheme $B'$ over $S'$.  To complete the lifting of $G$, we only need to lift the class of the extension.  But $B$ has order invertible in $k$ (because $\Hom(G_0, \mathbb G_a) = 0$).  There is therefore a positive integer $N$ such that $NB = 0$ and $N$ is invertible in $k$.  We have an exact sequence of sheaves on the big \'etale site of $S'$:
\begin{equation*}
	0 \to \underline{\hom}(B',A') \to \underline{\ext}^1(B',A'[N]) \to \underline{\ext}^1(B',A') \to 0
\end{equation*}
But both $\underline{\hom}(B',A') = \underline{\hom}(B',A'[N])$ and $\underline{\ext}^1(B',A'[N])$ are representable by algebraic spaces that are \'etale over $S'$ since both $B'$ and $A'[N]$ are \'etale over $S'$.  Therefore $\underline{\ext}^1(B',A')$ is \'etale over $S'$ and the class of $G$ in $\ext^1(B,A)$ lifts (uniquely) to an element $\ext^1(B',A')$.  We write $G''$ for the corresponding lift of $G$ to $S'$.

 	Our next step is to modify $G''$ to ensure that the $S$-homomorphism $G \to X$ extends to an $S'$-homomorphism $G'' \to X'$.  For this, we observe that the exact sequence
	\begin{equation*}
\xymatrix{		0 \ar[r]& \mathfrak g \ar[r] &\mathfrak x \ar[r]& \mathfrak y \ar[r]& \mathfrak h \ar[r]& 0}
	\end{equation*}
	is split (it is a sequence of $k$-vector spaces).
	 Therefore, the sequence of $k$-vector spaces 
	\begin{equation}\label{E:Nat_Ext_ES}
		\xymatrix{0 \ar[r]& \operatorname{Ext}^1_k(G_0, \mathfrak g) \mathop\otimes_k I
		\ar[r]& \operatorname{Ext}^1_k(G_0, \mathfrak x) \mathop\otimes_k I
		\ar[r]& \operatorname{Ext}^1_k(G_0, \mathfrak y) \mathop\otimes_k I}
	\end{equation}
	is also exact.
	 
	Since there is a flat, commutative extension $G''$ of $G$ to $S'$, \Cref{T:1}~\ref{T:1_ii} implies that the choices of $G''$ are parameterized by $\operatorname{Ext}^1_k(G_0, \mathfrak g) \mathop\otimes_k I$.  For a fixed choice of $G''$, the obstruction to deforming the map 
	$G \to X$ to a map $G'' \to X'$ lies in $\operatorname{Ext}^1_k(G_0, \mathfrak x) \otimes_k I$ (see \Cref{T:2}~\ref{T:2_i}). The 
	image of this  extension in the group on the right in \eqref{E:Nat_Ext_ES}, namely $\operatorname{Ext}^1_k(G_0, \mathfrak y) \otimes_k I$,
	is the obstruction to deforming the composition $G \to Y$ to $G'' \to Y'$; this latter obstruction is zero, since the morphism $G\to Y$ is the zero morphism, and can therefore be trivially lifted to the zero morphism  $G''\to Y'$.  Therefore, by the exactness of~\eqref{E:Nat_Ext_ES}, the obstruction to deforming the map $G \to X$ actually lies in the image of 
	$\operatorname{Ext}^1_k(G_0, \mathfrak g) \otimes_k I$.
	But  since $\operatorname{Ext}^1_k(G_0, \mathfrak g) \otimes_k I$  acts simply transitively on the choices of $G''$ deforming $G$, 
	and the corresponding effect of this action of $\operatorname{Ext}^1_k(G_0, \mathfrak g) \otimes_k I$ on the  obstruction to deforming $G\to X$ is given via the natural map in \eqref{E:Nat_Ext_ES} (see \Cref{R:tan_to_obst}), 
	it follows that there is some choice of deformation $G''$ of $G$ such that the homomorphism $G \to X$ extends to $G'' \to X'$.  Replacing $G''$ with this choice, we have a homomorphism $u : G'' \to X'$.

	By composition, we obtain a morphism $G'' \to X' \to Y'$ extending the zero morphism $G \to Y$. We also have the zero morphism $G''\to Y'$ extending the zero morphism $G\to Y$.  By \Cref{T:2}~\ref{T:2_ii}, the difference between  these two morphisms lies in $\operatorname{Ext}^0_k(G_0,\mathfrak y)$, which is equal to zero by assumption.  Thus the composition $G''\to X'\to Y'$ is the zero morphism, so that $G''\subseteq G'=\ker(f')$.

Finally, since $G''$ is flat, it follows from Nakayama's lemma that $G''=G'$. 
Indeed, first note that since $G$ and $X$ are assumed to be smooth, and $G\hookrightarrow X$ is a closed embedding, we have that $G''\to X'$ is a closed embedding (e.g., \cite[Rem.~3.4.10]{sernesi}).  Now 
let $J$ be the ideal of $G''$ in $G'$. We have a short exact sequence 
$$
	\xymatrix{0\ar[r]&  J\ar[r]& \mathcal O_{G'} \ar[r]& \mathcal O_{G''} \ar[r]& 0.}
$$
	Since $\mathcal O_{G''}$ is assumed to be flat over $S'$, applying $(-) \otimes_{\mathcal O_{S'}} k$ preserves the exactness of the above sequence.  Therefore the ideal of $\mathcal O_{G_0}= \mathcal O_{G''}\otimes_{\mathcal O_{S'}} k$ in $\mathcal O_{G_0} = \mathcal O_{G'} \otimes_{\mathcal O_{S'}} k$ is $J \otimes_{\mathcal O_{S'}} k$.   Hence $J \otimes_{\mathcal O_{S'}} k = 0$, so $J = 0$ by Nakayama's lemma.

	Thus $G'' = G'$ and $G'$ is therefore flat, hence smooth over $S'$. \qed

\section{Examples where the image is not an abelian scheme}\label{S:Examples}

In this section we give two examples (\Cref{E:Serre} and \Cref{E:charp}) of homomorphisms $f:X\to Y$ of abelian schemes over DVRs where the image is not an abelian scheme.   In both cases, the image is a flat proper connected group scheme.  Restricting those examples to Artinian rings over the closed point, one can obtain examples of morphisms over Artinian rings where the image is not an abelian scheme; in those cases, the image is a proper connected group scheme, but is not flat (the image of the restriction, which is not flat, is a closed subscheme of the restriction of the image, which is flat; see \Cref{R:ImNotStable} for more details).   This section can be viewed as an expansion of \cite[Exa.~8, \S 7.5, p.190]{BLR}, which is due to Serre.

\subsection{The main strategy}

The basic observation is the following:
 
\begin{pro}[Serre]\label{P:Serre}
Let  $G$ be a finite flat   group scheme over an integral scheme $S$. Assume  there exist abelian varieties $X$ and $X'$ over $S$ such that $G$ admits a closed embedding of group schemes  $G\hookrightarrow X'$,  and homomorphism $u:G\to X$ that is not a closed embedding  of $S$-group schemes, but is a closed embedding of group schemes over the generic point $\eta$ of $S$.  Then there is a homomorphism of $S$-group schemes $u':X'\to Y'$ so that the image $u'(X')$ is not an abelian subscheme of $Y'$.  
\end{pro}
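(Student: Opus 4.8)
The plan is to realize Serre's construction as a pushout of $X$ and $X'$ along $G$, and then to detect the failure of the image to be an abelian scheme through the non-flatness of a kernel, invoking \Cref{P:FlatSuff}. Concretely, set $A := X \times_S X'$, an abelian scheme over $S$, and consider the homomorphism $\gamma := (u,\iota) \colon G \to A$, $g \mapsto (u(g),\iota(g))$. Since $\operatorname{pr}_{X'}\circ\gamma = \iota$ is a monomorphism, $\gamma$ is a monomorphism; as $G$ is finite (hence proper) over $S$ and $A$ is separated over $S$, $\gamma$ is proper, and a proper monomorphism is a closed immersion. Thus $\gamma$ exhibits $G$ as a finite locally free closed subgroup scheme of $A$. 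I would then form the quotient $Y' := A/\gamma(G)$, which is an abelian scheme carrying a faithfully flat isogeny $q\colon A \to Y'$ with $\ker q = \gamma(G)$ (a standard fact about quotients of abelian schemes by finite flat subgroup schemes; see also \Cref{T:quot}). Finally I define $u'\colon X' \to Y'$ as the composite $X' \xrightarrow{\beta} A \xrightarrow{q} Y'$, where $\beta(x') = (0,x')$; this is a homomorphism of abelian schemes over $S$.

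The next step is to identify $\ker(u')$. By construction $\ker(u') = \beta^{-1}(\ker q) = \beta^{-1}(\gamma(G)) = X'\times_A \gamma(G)$, and a point of this fibre product is a pair $(x',g)$ with $(0,x') = (u(g),\iota(g))$, i.e. $u(g) = 0$ and $x' = \iota(g)$. Since $\iota$ is a closed immersion, projection onto $G$ identifies this scheme with $\ker(u)$, so that $\ker(u') \cong \ker(u)$ as $S$-group schemes.

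It then remains to show that $\ker(u)$ — equivalently $\ker(u')$ — is not flat over $S$, for then \Cref{P:FlatSuff} (the implication that, for a homomorphism with smooth proper source and separated target, the scheme-theoretic image is a smooth proper subgroup scheme only if the kernel is flat) shows that $u'(X')$ is not an abelian subscheme of $Y'$. Here the hypotheses on $u$ enter. Because $G$ is finite over $S$ and $X$ is separated, $u$ is finite, so $u$ is a closed immersion if and only if it is a monomorphism, i.e. if and only if $\ker(u)$ is trivial; hence the assumption that $u$ is not a closed embedding of $S$-group schemes says precisely that $\ker(u)\ne 0$. On the other hand, $u$ being a closed embedding over $\eta$ forces $\ker(u)_\eta = \ker(u_\eta) = 0$, so the generic fibre of $\ker(u)$ is trivial of rank $1$. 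If $\ker(u)$ were flat over $S$ it would be finite locally free, hence of locally constant rank; since $S$ is integral, and therefore connected, that rank would be constantly $1$, forcing the identity section $S \hookrightarrow \ker(u)$ to be an isomorphism and $\ker(u)$ to be trivial, contradicting $\ker(u)\ne 0$. Therefore $\ker(u)$ is not flat.

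The main obstacle is the non-flatness argument of the last paragraph: it is exactly the point where the asymmetry between the generic and the global behaviour of $u$ is converted into the failure of flatness, and hence, via \Cref{P:FlatSuff}, into the failure of $u'(X')$ to be an abelian scheme. The remaining ingredients — that $\gamma$ is a closed immersion and that $\ker(u')\cong\ker(u)$ — are routine, and the existence of the quotient $Y'$ as an abelian scheme is standard.
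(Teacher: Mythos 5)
Your proof is correct and follows essentially the same route as the paper: the same pushout $Y' = (X'\times_S X)/G$ with $u'$ the induced map from $X'$, and the same contradiction obtained from the fact that a surjection onto an abelian subscheme would have a flat kernel, which is impossible for a kernel that is nontrivial yet generically trivial over an integral base. Your explicit identification $\ker(u')\cong\ker(u)$ is exactly the content of the paper's \Cref{R:alt-PSerre-view}, and merely repackages the paper's observation that $u'$ fails to be a closed embedding because $u$ does.
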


\begin{proof} We follow the idea from  \cite[Exa.~8, \S 7.5, p.190]{BLR}.
Consider the push-out diagram of commutative group
  schemes
  \begin{equation}
    \label{D:pushout}
\xymatrix{
G \ar[r]^<>(0.5)u \ar@{^(->}[d]& X \ar[d]\\
X' \ar[r]^{u'}& Y',
}
\end{equation}
where  $Y'$ is the quotient of $X'\times_S X$ by
the action of $G$.  
Note that $G$ is a finite flat sub-group scheme of $X'\times_S X$ via the product of the inclusion $G\subseteq X'$ and $u$.   
Since $Y'$ is the quotient
of an abelian $S$-scheme by a finite flat sub $S$-group scheme, it is an
	abelian $S$-scheme.
      
Although $u'_\eta$ is a closed embedding, 
\eqref{D:pushout} shows that, since $u$ is not a closed embedding,
$u'$ is similarly not a closed embedding.
      We can conclude that the image of
$u'$ is not an abelian subscheme of $Y'$.  

Indeed, suppose the image
of $u'$ were an abelian subscheme $X''\subseteq Y'$.  Then we
would have a surjection of abelian schemes $X'\twoheadrightarrow X''$;
	the kernel would be flat~\cite[Lem.~6.12, p.~122]{GIT}, but being trivial on the generic fiber, the
kernel would be trivial everywhere, and therefore $X'\to X''$ would be
an isomorphism.  This contradicts the hypothesis that $u'$ is not a closed
embedding. 
\end{proof}

With this proposition, we simply need to find finite flat group schemes satisfying the conditions of the proposition.  Clearly, from \Cref{T:ImIsAbSch}, one cannot find such examples in characteristic $0$.  In the next sections, we explain how to find such examples in mixed and positive characteristic.

\begin{rem}\label{R:alt-PSerre-view}
An alternative way to interpret \Cref{P:Serre} is to consider the closed embedding $X'=X'\times_S \{1\}\subseteq X'\times_S X$, and the diagonal embedding $G\subseteq X'\times_SX$, and then consider the intersection $X'\cap G:=(X'\times_S\{1\})\times_S G$, which gives us a closed sub-group scheme of $X'$.    Then by assumption, $(X'\cap G)_\eta$ is the trivial group, while there exist points of $s$ of $S$ such that $(X'\cap G)_s$ is non-trivial.  The morphism $u':X'\to X'\times_S X \to Y'$ of \eqref{D:pushout} has kernel $(X'\cap G)$, and one can interpret \Cref{P:Serre} as saying that the quotient of $X'$ by the kernel $(X'\cap G)$ of $u'$ is not an abelian scheme.   
\end{rem}

As our examples will be constructed over DVRs, we make several notes:

\begin{rem}[Images of abelian schemes over a DVR are flat] \label{R:FlatDVR}
	If $R$ is a DVR, then the image of a morphism of abelian schemes over $R$ is flat proper group scheme.  Indeed, if $f : X \to Y$ is a morphism of abelian schemes, then $f(X)$ is irreducible since $X$ is, is reduced since $X$ is (by the definition of the scheme theoretic image), and dominates $\operatorname{Spec}R$ since $X$ does. Being reduced, $f(X)$ has no embedded points, and so every associated point of $f(X)$ dominates $\operatorname{Spec}R$, implying that $f(X)$ is flat over $R$.
	 	                 \end{rem}

\begin{rem}[Conditions for the image to be an abelian scheme over a DVR]\label{R:DVRconds}
In light of \Cref{R:FlatDVR}, given any homomorphism $f:X\to Y$ of abelian schemes over the spectrum $S$ of a DVR, then the image $f(X)$ is an abelian scheme if and only if the fiber $f(X)_s$ over the special point $s$ of $S$ is smooth.  Indeed, as the image is a flat proper group scheme, it will fail to be an abelian scheme if and only if it is not smooth over $S$, which can be checked on the fibers.   Since localization is flat, and formation of images commutes with flat base change, we have that $f_\eta(X_\eta)=f(X)_\eta$ is an abelian variety, where $\eta$ is the generic point of $S$ (since images of abelian varieties are abelian varieties).  
Therefore, it must be that $f(X)_s$ is not smooth.  As another variation, in  \cite[\S4]{FS08}, it is also shown using Zariski's main theorem that $f(X)$ is an abelian scheme if and only if $f(X)$ is normal.  
\end{rem}

\begin{rem}[Failure of base change for images of abelian schemes over a DVR]\label{R:BC}
\Cref{T:ImIsAbSch} implies that if the image of a morphism of abelian schemes $f:X\to Y$ is an abelian scheme, then the image is stable under base change.  However,  if the image of a morphism of abelian schemes is \emph{not} an abelian scheme, then the image need not be stable under base change.  
Indeed, in light of the previous remarks, given any homomorphism $f:X\to Y$ of abelian schemes over the spectrum $S$ of a DVR, if the image $f(X)$ is not an abelian scheme, then base change for the image fails over the special point $s$ of $S$.
As we saw in \Cref{R:DVRconds}, $f_\eta(X_\eta)=f(X)_\eta$ is an abelian variety, where $\eta$ is the generic point of $S$, and  $f(X)_s$ is not smooth.  Considering the natural inclusion $f_s(X_s)\subseteq f(X)_s$, we have that the former is an abelian scheme  and therefore smooth, so that the containment  is not an equality.  
\end{rem}

\subsection{Reminders on finite group schemes and abelian varieties}
\label{SS:fingroupschemes}

For use in examples, we establish some notation concerning certain
finite flat group schemes.  (See, e.g., \cite[\S 1.2]{oortCGS} or  \cite[\S 2]{shatz-gpschemes} for more details of the facts reviewed here.) For our purposes, it suffices to work over 
an affine scheme $S = \spec R$.  Fix a prime $p$.  
\medskip 
\begin{description}

  \item [\'etale] We have the constant group scheme
    $\underline{\integ/p\integ}_R$, with underlying scheme
\[(\underline{\integ/p\mathbb Z})_R =\spec  \underbrace{R\oplus \cdots
    \oplus  R}_{\text{$p$ times}}\]
and multiplication law given by addition in $\integ/p\integ$.

\item[multiplicative] We have the multiplicative group scheme
  $(\mmu_p)_R = \ker [p]: \gp_{m,R} \to \gp_{m,R}$, with underlying scheme
  \[
    (\mmu_p)_R = \spec R[y]/(y^p-1)
  \]
  and comultiplication law $y\mapsto y_1\otimes y_2$.

  \item[additive] If $pR = (0)$, we have the additive group scheme
    which is the kernel of the Frobenius morphism
    $(\aalpha_p)_R = \ker F: \gp_{a,R} \to \gp_{a,R}^{(p)}$, with
    underlying scheme
    \[
      (\aalpha_p)_R = \spec R[z]/(z^p),
    \]
    and comultiplication law $z \mapsto z_1 \otimes 1 + 1 \otimes
    z_2$.
  \end{description}

  \begin{rem}
    For context, if 
$R = k$ is the spectrum of an algebraically closed field of
characteristic $p$, then any group scheme of rank $p$ is isomorphic to
exactly one of those schemes, and there are no nontrivial morphisms
between them; moreover, $(\underline{\integ/p\integ})_k$ is \'etale,
while both $\mmu_{p,k}$ and $\aalpha_{p,k}$ are connected and
nonreduced.  Cartier duality exchanges $\underline{\integ/p\integ}_k$
and $\mmu_{p,k}$, while $\aalpha_{p,k}$ is self-dual.  The
endomorphisms of $\underline{\integ/p\integ}_k$ and $\mmu_{p,k}$ are
discrete; both are isomorphic to $\integ/p\mathbb Z$, while
$\operatorname{End}(\aalpha_{p,k}) \iso k$. We refer the reader to \cite[\S 14]{mumfordAV} for more details. 
 In contrast, if $p$ is invertible in $R$, then $\mmu_{p,R}$ and
$\underline{\integ/p\mathbb Z}_R$ are both \'etale (for the former, use the Jacobian criterion, while the latter is \'etale by construction), and are
\'etale-locally isomorphic (more details below).
\end{rem}

We now  review  morphisms $\underline{\mathbb Z/p\mathbb Z}_R\to \mmu_{p,R}$ in more detail, as the existence of a certain type of such morphism is crucial to \Cref{E:Serre}.  Certainly this is all well known, but we find it is helpful to include these details in explaining  \Cref{E:Serre}.  
Given any ring $R$, a homomorphism of $R$-group schemes  $\underline{\integ/p\integ}_R \to 
  \mmu_{p,R}$  is induced by   a homomorphism $ R[y]/(y^p-1) \to  R\oplus \cdots \oplus  R$ of $R$-algebras.  Such a homomorphism is governed by the image of $y$ in each component, each of which must be a $p$-th root of unity from the ring homomorphism property.   In particular, if $\zeta_p\in R$ is a $p$-th root of unity, then we obtain a homomorphism of $R$-group schemes 
\begin{equation}
  \label{D:g}
\xymatrix{g= g_{\zeta_p}: \underline{\integ/p\integ}_R \ar[r]&
  \mmu_{p,R}}
\end{equation}
corresponding to the ring
homomorphism
\begin{equation}
  \label{D:phi}
  \xymatrix@R=.2em{
 R[y]/(y^p-1) \ar[r]^{\phi = \phi_{\zeta_p}}& R\oplus \cdots \oplus  R \\
y\ar@{|->}[r] &  (1,\zeta_p,\dots,\zeta_p^{p-1}).
}
\end{equation}
In fact, one can see from the homomorphism property for the group schemes that any morphism $\underline{\mathbb Z/p\mathbb Z}_R\to \mmu_{p,R}$ can be described in this way.

If $p \in R^\times$ (the group of units)  and $\zeta_p\in R$
is a primitive $p$-{th} root of unity, then $g$  \eqref{D:g} is an isomorphism.  
Indeed, it suffices to check that $g$ is an isomorphism after a faithfully flat base change; arguing as in \Cref{L:R2Artin}, it suffices to consider the case where $R$ is an Artinian local ring where $p$ is invertible.  Since both $\underline{\mathbb Z/p\mathbb Z}_R$ and $\mmu_{p,R}$ are \'etale over $R$, they consist of disjoint unions of components, and $g$ is the identity on components (although components can be identified under $g$).  Thus we only need to show that the morphism is a bijection on components, which we can check on the special fiber.  So we have reduced to showing that $g$ is an isomorphism when $R$ is an algebraically closed field of characteristic not equal to $p$.  In this case, the Chinese Remainder Theorem gives that  \eqref{D:phi} is an isomorphism.
Note that conversely, if $R$ is a field of characteristic $p$, then  the same argument shows that the only morphism  $\underline{\mathbb Z/p\mathbb Z}_R\to \mmu_{p,R}$ is the constant morphism.

\begin{rem}\label{R:e-ge-p-1}
If $R$ is a DVR of mixed characteristic, with valuation $\nu$, then
the condition that $R$ contains all $p$-th roots of unity implies that
$e:=\nu(p)$ is a positive multiple of $p-1$, and thus $e\ge p-1$  (\cite[Ch.~IV, \S 4, Prop.~17]{serre4}).  In particular, if there is a morphism $\underline{\integ/p\integ}_R \to \mmu_{p,R}$ that is generically an isomorphism, then $e \ge p-1$.
 \end{rem}

\begin{rem}[Flatness of the image and kernel of $g$]
	The image of $g$ is a proper group scheme of $(\mmu_p)_R$ over $R$, which can be  flat.  For instance, if $R$ is a field of  characteristic not equal to $p$, then the image is flat.  More interestingly, if $R$ is a DVR, then the image of $g$ is flat (it is torsion-free since $\mathcal O_{\underline{\integ/p\integ}}$ is flat, hence torsion-free).  
  
	Similarly, the kernel of $g$ is a proper subgroup scheme of $(\mathbb Z/p\mathbb Z)_R$, which also can be flat.  For instance, if  $R$ is a field of  characteristic not equal to $p$, then the kernel is flat.   

	It is not hard to find examples of rings over which neither the image nor kernel of $g$ is flat.  If $R$ is any commutative ring containing an element $\zeta$ such that $\zeta^p = 1$ then we obtain a morphism $g : \underline{\integ/p\integ}_R \to \mmu_{p,R}$ sending the section $1$ to $\zeta$.  The intersection of the kernel of $g$ with the section $1$ is isomorphic to $\operatorname{Spec}(R/(1-\zeta))$, which can fail to be flat over $R$ (if $R = \mathbb Z[\zeta] / (\zeta^p - 1)$, for example).

	The image is the spectrum of an $R$-subalgebra of $\Gamma(\underline{\integ/p\integ}, \mathcal O) = R^p$.  If $R$ is a DVR, it will therefore be torsion free, hence flat.  We will explore a situation where the image is not flat in \Cref{Ex:imagenotflat}.
	 \end{rem}

\begin{exa}[Example of non-flat kernel and image for $g$]  \label{Ex:imagenotflat}
	Let $R = \integ/4\integ$ and consider the map
	\begin{equation*}
		g : \underline{\integ/2\integ}_R \to \mmu_{2,R}
	\end{equation*}
	that sends the nontrivial section of $\underline{\integ/2\integ}_R$ to $-1$.  This corresponds dually to the ring homomorphism
	\begin{equation*}
		\phi : R[y] / (y^2 - 1) \to R \times R : y \mapsto (1,-1) .
	\end{equation*}
	The schematic image of $g$ is the spectrum of the image of $\phi$.  But $\phi(y-1)$ is not a multiple of $2$ while $\phi(2(y-1)) = 0$.  Thus $2(y-1)$ is a nonzero element of $2R \otimes_R \operatorname{im}(\phi)$ whose image in $\operatorname{im}(\phi)$ is zero, and so $\operatorname{im}(\phi)$ is not flat over $R$.n

In a similar fashion, for any prime $p$, any $r \ge 2$, and any Artinian
local ring $R$ admitting an inclusion $\integ[\zeta_p]/(p^r)
\hookrightarrow R$, there exists a morphism of finite flat commutative
group schemes $(\underline{\integ/p\mathbb Z})_R \to (\mmu_p)_R$ with neither kernel nor image
 flat over $\spec R$. 
 \end{exa}

\subsubsection{Canonical lifting}

We provide a quick sketch of the construction of the Serre--Tate
canonical lift, although all we need is \Cref{E:canonical}.

Let $A_0/\kappa$ be an ordinary $g$-dimensional abelian variety over a
perfect field $\kappa$. Then there is a canonical isomorphism of $p$-divisible
groups
\[
  A_0[p^\infty] \iso (\widehat A_0[p^\infty]^\et)^t \times A_0[p^\infty]^\et,
\]
where $A_0[p^\infty]^\et$ is the maximal \'etale quotient of
$A_0[p^\infty]$, thus an \'etale $p$-divisible group of height $g$;
$\widehat A_0$ is the dual abelian variety; and $^t$ denotes Serre dual.  In
particular, $A_0[p^\infty]$ is canonically isomorphic to the product
of a canonically-defined \'etale $p$-divisible group and the Serre dual of a
second canonical \'etale $p$-divisible group.

Now let $R$ be a complete Noetherian local ring with residue field
$\kappa$.  Then $A_0[p^\infty]^\et$ and $\widehat A_0[p^\infty]^\et$ both
canonically deform to \'etale $p$-divisible groups over $G$ and $\widehat G$
over $R$, and the Serre--Tate canonical lifting of $A$ to $R$ is the
deformation with $p$-divisible group $A[p^\infty] \iso (\widehat G)^t \times
G$.

\begin{exa}
  \label{E:canonical}
  Let $\kappa$ be a perfect field of characteristic $p>0$, let $R$ be a complete Noetherian local
ring with residue field $\kappa$, and let $E_0/\kappa$ be an elliptic curve with
$E_0[p](\kappa) \iso \integ/p\mathbb Z$; such an elliptic curve exists.
(This is well-known, but for convenience, we remind the reader: By base change, it suffices to show existence over $\mathbb F_p$.  For this, given an elliptic curve $E/\mathbb F_p$ with trace of Frobenius,  $a$, we have 
$|E(\mathbb{F}_p)| = p+1-a$.  If one can find $E/\mathbb F_p$ with trace of Frobenius $a=1$, then by counting points, one has  $E(\mathbb{F}_p)= E[p](\mathbb F_p)$, and consequently, $E[p](\mathbb{F}_p)=\mathbb Z/p\mathbb Z$.  For a prime $p$, any number $a$ satisfying $|a|\le 2\sqrt p$ arises as the trace of Frobenius for some elliptic curve.)
     Then $E_0$ is ordinary; the
condition on $\kappa$-points forces an identification of group schemes  $E_0[p] \iso \mmu_{p,\kappa}
\times_\kappa \underline{\integ/p\integ}_\kappa$, and its canonical lift $E$ of $E_0$ to $R$
satisfies
\begin{equation}
  \label{E:cansplit}
  E[p] \iso \mmu_{p,R} \times_R \underline{\integ/p\integ}_R.
\end{equation}
                  \end{exa}

\subsection{Examples of homomorphisms of abelian schemes where the image is not an abelian scheme}

\begin{exa}[{Serre's example over a DVR of mixed characteristic
    \cite[Exa.~8, \S 7.5, p.190]{BLR}}] \label{E:Serre} 
    Here we give an example of a morphism of abelian schemes over a
    DVR of mixed characteristic  that has image a flat proper group
    scheme that is not an abelian scheme.

Let $R$ be a DVR containing all $p$-th
roots of unity, with perfect residue field $k$ of characteristic
$p>0$.  Note that the ramification index of $R$ satisfies $e := v(p)
\ge p-1$ (\Cref{R:e-ge-p-1}), and so this situation is \emph{not}
automatically covered by \Cref{T:ImIsAbSch}.
Now let $E$ and $E'$ be elliptic curves over $R$ (i.e., one-dimesional
abelian schemes) admitting respective closed embeddings
\begin{equation} \label{D:torsioncanonical}  
  \xymatrix@R=1em{
   \mmu_{p,R} \ar@{^(->}[r] & E \\
    \underline{\integ/p\integ}_R \ar@{^(->}[r] & E'.
  }
\end{equation}
In fact, one could take $E = E'$ constructed as follows.  Let $E_0/k$ be an ordinary elliptic curve with $E_0[p](k) \iso
(\integ/p\mathbb Z)$; this can already be accomplished over $\ff_p$.  Let $E$ be its canonical lift to $R$ (\Cref{E:canonical}); the
existence of both embeddings above comes from the
splitting \eqref{E:cansplit}.

The key point is that the hypothesis on $R$ lets us construct the  morphism $g:
\underline{\integ/p\integ}_R \to \mmu_{p,R}$ in \eqref{D:g} with the property that $g$ is an isomorphism on the generic fiber, but not an isomorphism on the special fiber. 
                                          Now let $u$ be the
  composition of $g$  with the closed embedding \eqref{D:torsioncanonical}:
$$
  \xymatrix{u:(\underline{\mathbb Z/p\mathbb Z})_R\ar[r]^<>(0.5){g}&
    (\mmu_p)_R\ar@{^(->}[r] & E.}
$$ 
Note that $u$ is a closed
  embedding on the generic fiber but, by considering the morphism over
  the special point, we see that $u$ is not a closed
  embedding. 
  With this, we are done from \Cref{P:Serre}. 
  
 For clarity, consider the push-out diagram of commutative group
  schemes
  \begin{equation}
    \label{D:pushout2}
\xymatrix{
(\underline{\mathbb Z/p\mathbb Z})_R \ar[r]^<>(0.5)u \ar@{^(->}[d]& E \ar[d]\\
E' \ar[r]^{u'}& F',
}
\end{equation}
where  $F'$ is the quotient of $E'\times_R E$ by
the action of $(\underline{\mathbb Z/p\mathbb Z})_R$.   While the
image of $u'$ is a proper flat group scheme (\Cref{R:FlatDVR}), it is
\emph{not} an abelian scheme (as explained in the proof of \Cref{P:Serre}).

                \end{exa}

\begin{exa}[Fakhruddin--Srinivas' example over a DVR of pure characteristic $p>0$]\label{E:charp}                 
    Here we
  give a similar example of a morphism of abelian schemes over a DVR of pure characteristic $p > 0$ whose image is a flat proper group scheme that is not an abelian scheme.
  We follow the hint in \cite[p.~132]{FS08}.
  We refer the reader to \cite[\S1]{lioort} for
	reminders on supersingular abelian varieties,
and start by recalling the Moret-Bailly construction \cite[\S{}A.9]{lioort}.
  Let $k$ be a field of characteristic $p>0$, let $E_0/k$ be a
  supersingular elliptic curve, and let $A_0$ be the superspecial
	abelian surface $A_0 = E_0 \times_k E_0$.  Since $E_0$ is supersingular, it admits an inclusion $h : \aalpha_p \to E_0$ (in fact, a unique one, up to isomorphism, since the $p$-torsion of $E_0$ is a non-split extension of $\aalpha_p$ by $\aalpha_p$).  Since the endomorphism ring scheme of $\aalpha_p$  is $\mathbb G_a$, there is a $2$-dimensional family of maps $(ah, bh) : \aalpha_p \to A_0$, depending on parameters $a,b \in \mathbb G_a$ (in fact, these are all homomorphisms $\aalpha_p \to A_0$).  Then $$P := \proj\hom(\aalpha_p,
A_0) \iso \proj^1_k$$ parametrizes sub-group schemes of $A_0$ which are
isomorphic to $\aalpha_p$.  

  Let $E = E_0\times_k P$ and $A = A_0 \times_k
P$; each is an abelian scheme over $P$.
The graph of the natural
morphism $\aalpha_p \times_k \hom(\aalpha_p, A_0) \to A_0 \times_k
\hom(\aalpha_p, A_0)$ descends to give a sub-group scheme $G\subseteq A$.  Note that  $G \iso \aalpha_p \times P$ is a finite flat group
scheme over $P$.  Let $\iota:G \hookrightarrow A$ be the inclusion.

Let $E_1 = (E_0 \times_k \st 1)\times_k P$ and $E_2 = (\st 1 \times_k
E_0)\times P$, with inclusions $\iota_i: E \stackrel{\sim}{\to} E_i\hookrightarrow A$ and
projections $\pi_i: A \to E_i$.  Let $H_i = G\times_A E_i$.  There is
a unique closed point $t_i \in P$ such that, for $t\in P$ with residue
field $\kappa$, 
\[
  H_{i,t} \iso \begin{cases}\st{1} & t\not = t_i\\
    \aalpha_{p,\kappa} & t=t_i;
  \end{cases}
\]
and $t_1\not = t_2$.
The composition $G\hookrightarrow A \stackrel{\pi_i}{\to}E_i$ is a closed
embedding away from $s_i := t_{2-i}$, and fails to be an embedding at
$s_i$.
So, let $S = P-\{s_2\}\cong \mathbb A^1_k$, and pull back
these maps to $S$.  Then
\[
  \xymatrix{G_S \iso \alpha_{p,S} \ar@{^(->}[r] & A_S
    \ar@{->>}[r]^<>(0.5){\pi_1} & E_{1,S}
  }
\]
is a closed immersion at the generic point of $S$, but fails to be an
immersion at $s_1$, while the composition 
\[
  \xymatrix{G_S  \ar@{^(->}[r] & A_S
    \ar@{->>}[r]^<>(0.5){\pi_2} & E_{2,S}
  }
\]
is a closed immersion.  The construction of \Cref{P:Serre} now
produces an example of a morphism of abelian schemes over $S$  whose image is not
an abelian scheme.
Replacing $S$ with the spectrum 
$\spec \mathcal O_{P,s_1}\hookrightarrow P$ gives an example over a DVR of pure characteristic $p$.
\end{exa}

\begin{exa}[Continuation of \Cref{E:charp}]
  \label{E:charpbis}
We now view  \Cref{E:charp} from the perspective of \Cref{R:alt-PSerre-view}. 
    Note that $E_0$ has a unique sub-group scheme isomorphic to
$\aalpha_p$, and $E_0/\aalpha_p \iso E_0^{(p)}$, the Frobenius
twist of $E_0$.  Let us further assume that $k = \ff_{p^2}$, and that
$E_0 \not \iso E_0^{(p)}$.  (This is possible for most primes $p$.  In
particular, such an $E_0$ exists if $p \ge 71$; see e.g.,   \cite[\S 1]{sankaran-ss} and  \cite{ogg75}.) 
 Let $u'$ be the composition
\[
  \xymatrix{
u': E \ar@{^(->}[r]^{\iota_1} & A \ar@{->>}[r] & A/G
  }
  \]
  from \Cref{P:Serre} (see \Cref{R:alt-PSerre-view}).
 We claim  that the image of $u'$ is not an abelian scheme. 
  On one hand, at the generic point we have $u'(E)_\eta  =
E_\eta/H_{1,\eta} \iso E_\eta \iso
E_0 \times \eta$.  Consequently, the only abelian scheme over $P$ with
generic fiber $u'(E)_\eta$ is $E$ itself.
On the other hand, essentially because set-theoretic image commutes with base
change and because $u'$ is proper,
   we have an inclusion of schemes $u'(E_{t_1})  = E_{t_1}/H_{1,t_1}\subseteq
u'(E)_{t_1}$ which induces an isomorphism of topological spaces
\cite[pp.~216 and~218]{EH00}.  In
particular, we have
\[
  (u'(E)_{t_1})_{{\mathrm{red}}} \iso E_0^{(p)}\not\iso E_0,
\]
and so $u'(E)$ is not an abelian scheme.
Moreover, since $u'(E)$ is flat (\Cref{R:FlatDVR}; flatness is local), the
special fiber $u'(E)_{t_1}$ is nonreduced; otherwise, $u'(E)$ would
be a proper, flat connected smooth group scheme, thus an abelian
scheme.
 \end{exa}

\section{Deformations of smooth proper group schemes} \label{S:4}

The purpose of this section is to establish \Cref{P:2}, that deformations of smooth proper commutative group schemes are unobstructed.  We then discuss the relationship between this result and some other results in the literature.

\subsection{Generalities on extensions of group schemes} \label{S:3}

Bertolin and Tatar describe a canonical partial resolution of any
abelian group $G$ (with no geometric structure) \cite{BertolinTatar}:
\[
\xymatrix{L(G)_5 \ar[r]& L(G)_4 \ar[r]& L(G)_3 \ar[r]& L(G)_2 \ar[r]& L(G)_1 \ar[r]& G \ar[r]& 0}
\]

each $L(G)_i$ is a direct sum of sheaves $\mathbb Z[G^r]$.\footnote{This notation refers to the free abelian group generated by $G^r$.}
This complex is closely related to a complex described by Breen \cite{Breen1969b} that computes the homology of the Eilenberg--MacLane spectrum $HG$.  The complexes agree in low degrees but begin to diverge at $L(G)_4$.  Bertolin and Tatar's sequence will be the more convenient one for us because it is exact up to $L(G)_4$, which will be enough to compute $\operatorname{Ext}^2(G, -)$.

The following are the low-degree terms:
\begin{equation*} \begin{aligned}
	L(G)_1 & = \mathbb Z[G] \\
	L(G)_2 & = \mathbb Z[G^2] \\
	L(G)_3 & = \mathbb Z[G^3] \oplus \mathbb Z[G^2] \\
	L(G)_4 & = \mathbb Z[G^4] \oplus \mathbb Z[G^3] \oplus \mathbb Z[G^3] \oplus \mathbb Z[G^2] \oplus \mathbb Z[G]
\end{aligned}
\end{equation*}
Following Bertolin--Tatar, we write $[x|y|z]$ and $[x|_2y]$ for basis elements of $L(G)_3$ and $[x|y|z|w]$, $[x|y|_2z]$, $[x|_2y|z]$, and $[x|_3y]$ for basis elements of $L(G)_4$.  In terms of these basis elements, the differentials are given by the following formulas:
\begin{equation*} \begin{aligned}
	\partial_2 [x|y] & = [x + y] - [x] - [y] \\
	\partial_3 [x|y|z] & = [x + y|z] + [x|y] - [x|y+z] - [y|z]  \\
	\partial_3 [x|_2y] & = [x|y] - [y|x] \\
	\partial_4 [x|y|z|w] & = [y|z|w] - [x+y|z|w] + [x|y+z|w] - [x|y|z+w] + [x|y|z]  \\
	\partial_4 [x|y|_2|z] & = [x|_2z] - [x+y|_2z] - [y|_2z] - [x|y|z] + [x|z|y] - [z|x|y] \\
	\partial_4 [x|_2y|z] & = -[x|_2y] + [x|_2y+z] - [x|_2z] + [x|y|z] - [y|x|z] + [y|z|x] \\
	\partial_4 [x|_3y] & = -[x|_2y] - [y|_2x] \\
	\partial_4 [x] & = - [x|_2x]
\end{aligned}
\end{equation*}

If $F$ is an fpqc sheaf, then we obtain a spectral sequence by applying $\operatorname{Hom}(L(G), -)$ to an injective resolution of $F$:
\begin{equation} \label{E:3}
	\operatorname{Ext}^p(L_{q+1}(G), F) \Rightarrow \operatorname{Ext}^{p+q}(G, F) \qquad \text{(for $p+q \leq 3$)}
\end{equation}
When $G$ is a group scheme
, $\operatorname{Ext}^p(\mathbb Z[G^r], F) = H^p(\mathrm{fpqc}(G^r), F)$, and when $F$ is quasicoherent, $H^p(\mathrm{fpqc}(G^r), F) = H^p(\mathrm{zar}(G^r), F)$.

\subsection{Extensions of commutative group schemes by the additive group} \label{S:2}

\begin{pro} \label{P:10}
	Suppose that $k$ is a field and $G$ is an abelian variety over $k$.  Then $\operatorname{Ext}^2_k(G, \mathbb G_a) = 0$.
\end{pro}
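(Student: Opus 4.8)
The plan is to run the spectral sequence \eqref{E:3} with $F = \mathbb{G}_a = \mathcal{O}$ and show that every graded piece of $\operatorname{Ext}^2(G,\mathbb{G}_a)$ in the resulting filtration vanishes. Since $\mathcal{O}$ is quasicoherent, the $E_1$-terms are $E_1^{p,q} = \operatorname{Ext}^p(L_{q+1}(G),\mathbb{G}_a) = \bigoplus H^p(G^{r},\mathcal{O})$, the sum running over the summands $\mathbb{Z}[G^r]$ of $L_{q+1}(G)$. Reading off the low-degree terms of the Bertolin--Tatar resolution, the three contributions to total degree $2$ are
\[
E_1^{2,0} = H^2(G,\mathcal{O}), \qquad E_1^{1,1} = H^1(G^2,\mathcal{O}), \qquad E_1^{0,2} = H^0(G^3,\mathcal{O}) \oplus H^0(G^2,\mathcal{O}).
\]
I would show that each of $E_2^{2,0}$, $E_2^{1,1}$, $E_2^{0,2}$ vanishes; as there are then no surviving terms of total degree $2$ (and hence no higher differentials to analyze), this forces $\operatorname{Ext}^2(G,\mathbb{G}_a) = 0$.

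The two structural inputs are the standard description of the cohomology of an abelian variety (e.g.\ \cite{mumfordAV}): the K\"unneth decomposition $H^\bullet(G^r,\mathcal{O}) \cong H^\bullet(G,\mathcal{O})^{\otimes r}$ together with $H^\bullet(G,\mathcal{O}) \cong \bigwedge^\bullet V$ for $V := H^1(G,\mathcal{O})$, and the primitivity of $H^1$, namely $m^* = p_1^* + p_2^*$ on $V$ for the group law $m$. These give $H^0(G^r,\mathcal{O}) = k$, $H^1(G^r,\mathcal{O}) = V^{\oplus r}$, and $H^2(G,\mathcal{O}) = \bigwedge^2 V$, and the $d_1$-differentials are the alternating sums of pullbacks prescribed by the formulas for $\partial_2$, $\partial_3$, $\partial_4$.

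For $E_1^{2,0} = \bigwedge^2 V$ the relevant map is $d_1 = m^* - p_1^* - p_2^* \colon H^2(G) \to H^2(G^2)$; expanding $m^*(\alpha\wedge\beta) = m^*\alpha \wedge m^*\beta$ via primitivity yields $d_1(\alpha\wedge\beta) = p_1^*\alpha\wedge p_2^*\beta + p_2^*\alpha\wedge p_1^*\beta$, which is the standard inclusion $\bigwedge^2 V \hookrightarrow V\otimes V$ onto the cross-terms of $H^2(G^2)$, hence injective and $E_2^{2,0}=0$. For $E_1^{1,1}=H^1(G^2)=V^{\oplus 2}$, the incoming $d_1\colon H^1(G)\to H^1(G^2)$ equals $m^*-p_1^*-p_2^* = 0$ by primitivity, so it suffices to see the outgoing $d_1$ (dual to $\partial_3$) is injective: the $[x|y|z]$-component sends $p_1^*\alpha + p_2^*\beta$ to $\alpha^{(1)} - \beta^{(3)} \in H^1(G^3)=V^{\oplus 3}$ (where $\gamma^{(i)}$ is pullback along the $i$-th projection $G^3\to G$), which vanishes only if $\alpha=\beta=0$, giving $E_2^{1,1}=0$. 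For the bottom-row term $E_1^{0,2}$, the incoming $d_1$ (dual to $\partial_3$ on constants) is zero since the coefficient sums in $\partial_3$ all cancel, while the outgoing $d_1$ (dual to $\partial_4$ on constants) sends $(c,c')$ to a tuple whose $[x|y|z|w]$- and $[x|y|_2 z]$-coordinates are $c$ and $-c-c'$; these already force $c=c'=0$, so $E_2^{0,2}=0$.

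The main obstacle is bookkeeping rather than conceptual: one must correctly extract, from the lengthy explicit formulas for $\partial_3$ and $\partial_4$, the induced maps on $H^0$ and $H^1$ and verify the injectivity or vanishing of the pertinent $d_1$'s. The single genuinely essential geometric input is the primitivity $m^* = p_1^*+p_2^*$ on $H^1(G,\mathcal{O})$, which kills the incoming differentials on the $E_1^{1,1}$ and (in degree $1$) forces the $\bigwedge^2 V$ term into the cross-terms; the rest is linear algebra with the K\"unneth and exterior-algebra structure. Since all three $E_2$-terms of total degree $2$ vanish, the filtration on $\operatorname{Ext}^2(G,\mathbb{G}_a)$ is trivial and the proposition follows.
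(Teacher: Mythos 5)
Your proposal is correct and follows essentially the same route as the paper: the Bertolin--Tatar spectral sequence \eqref{E:3} with $F=\mathbb G_a$, killing $E_2^{2,0}$, $E_2^{1,1}$, and $E_2^{0,2}$ using the K\"unneth decomposition, $H^\bullet(G,\mathcal O_G)\cong\bigwedge^\bullet H^1(G,\mathcal O_G)$, primitivity of $H^1$, and the constancy of maps $G^r\to\mathbb G_a$. The only (immaterial) differences are cosmetic: for $E_2^{2,0}$ you check injectivity of the cross-term/antisymmetrization map directly where the paper phrases it as ``simultaneously linear and quadratic forces $a^2\omega=a\omega$,'' and for $E_2^{0,2}$ you compute the coefficient sums in $\partial_4$ explicitly where the paper identifies the whole bottom row with the complex of the trivial group.
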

\begin{proof}
	We will see that the spectral sequence~\eqref{E:3} degenerates to $0$ in the relevant entries at the $E_2$ page.  The first step is $E_2^{2,0}$, which is the kernel of
	\begin{equation*}
		\xymatrix{\mu^\ast - p_1^\ast - p_2^\ast : H^2(G, \mathcal O_G) \ar[r]& H^2(G^2, \mathcal O_{G^2}) }
	\end{equation*}
	where $\mu : G \times G \to G$ is the multiplication map.  That is, $E_2^{2,0}$ consists of those elements of $H^2(G, \mathcal O_G)$ that behave homogenously linearly.  But $G$ is an abelian variety, so $H^2(G, \mathcal O_G) = \bigwedge^2 H^1(G, \mathcal O_G)$ and $H^1(G, \mathcal O_G) = \operatorname{Ext}^1(G, \mathcal O_G)$ behaves homogeneously linearly, so $H^2(G, \mathcal O_G)$ behaves homogeneously quadratically.  Therefore $E_2^{2,0}$ consists of elements of $H^2(G, \mathcal O_G)$ that behave simultaneously homogeneously linearly and homgeneously quadratically, hence must be~$0$.

	In other words, suppose that $\omega = \sum \alpha_i \wedge \beta_i$ represents an element of $E_2^{2,0}$.  Let $(a,b) : G^2 \to G$ be the map sending $(x,y)$ to $ax + by$.  Then the linearity says
	\begin{equation*}
		(a,b)^\ast \omega = a p_1^\ast(\omega) + b p_2^\ast(\omega)
	\end{equation*}
	whereas the quadraticity says
	\begin{equation*} \begin{aligned}
		(a,b)^\ast \sum \alpha_i \wedge \beta_i & = \sum (a p_1^\ast(\alpha_i) + b p_2^\ast(\alpha_i)) \wedge (a p_1^\ast(\beta_i) + b p_2^\ast(\beta_i)) \\
		& = a^2 p_1^\ast(\omega) + b^2 p_2^\ast(\omega) + \sum ab p_1^\ast(\alpha_i) \wedge p_2^\ast(\beta_i) + ab p_2^\ast(\alpha_i) \wedge p_1^\ast(\beta_i).
	\end{aligned}
	\end{equation*}
	By the K\"unneth formula, we conclude $a^2 \omega = a \omega$ for all integers $a$, so $\omega = 0$.

	The next piece of the filtration comes from $E_2^{1,1}$, which is the homology of the following complex:
	\begin{equation*}
		\xymatrix{H^1(G, \mathcal O_G) \ar[r]^{\partial_2^T} &
                  H^1(G^2, \mathcal O_{G^2}) \ar[r]^-{\partial_3^T} & H^1(G^3, \mathcal O_{G^3}) \times H^1(G^2, \mathcal O_{G^2}) }
	\end{equation*}
	By the linear behavior of $H^1(G, \mathcal O_G)$, the map $\partial_2^T$ vanishes.  We can write $\partial_3^T$ as a matrix:
	\begin{equation*}
		\begin{pmatrix}
			(\mu \times \mathrm{id})^\ast + p_{12}^\ast - (\mathrm{id} \times \mu)^\ast - p_{23}^\ast
			p_{12}^\ast - p_{21}^\ast
		\end{pmatrix}
	\end{equation*}
	Since $H^0(G, \mathcal O_G) = k$, the K\"unneth formula allows us to identify $H^1(G^r, \mathcal O_{G^r}) = H^1(G, \mathcal O_G)^r$.  Suppose that $(\alpha, \beta) \in H^1(G, \mathcal O_G)^2$.  Since $\mu^\ast$ acts a $p_1^\ast + p_2^\ast$ on $H^1(G, \mathcal O_G)$, we have
	\begin{equation*}
		\partial_3^T(\alpha, \beta) = 
		\begin{pmatrix} 
			(\alpha, \alpha, \beta) + (\alpha, \beta, 0) - (\alpha, \beta, \beta) - (0, \beta, \beta) \\ 
			(\alpha - \beta, \beta - \alpha) 
		\end{pmatrix} 
			= 
		\begin{pmatrix} (\alpha, 0, \beta) \\ (\alpha - \beta, \beta - \alpha) 
		\end{pmatrix}  .
	\end{equation*}
	From the first row, we see that if $\partial_3^T(\alpha, \beta) = 0$ then $(\alpha, \beta) = 0$.  Thus $E_2^{1,1} = 0$.

	The last piece of the filtration comes from $E_2^{0,2}$, which is the homology of another sequence:
	\begin{equation} \label{E:4}
		\xymatrix{\Hom(L(G)_2, \mathbb G_a) \ar[r]^{\partial_3^T} &\Hom(L(G)_3, \mathbb G_a) \ar[r]^{\partial_4^T} &\Hom(L(G)_4, \mathbb G_a)}
	\end{equation}
	But homomorphisms $\mathbb Z[G^r] \to \mathbb G_a$ correspond to not-necessarily-homomorphic maps $G^r \to \mathbb G_a$.  Since $G^r$ is proper, reduced, and connected, maps $G^r \to \mathbb G_a$ are constant, and we may identify $\Hom(\mathbb Z[G^r], \mathbb G_a)$ with $k$. Therefore~\eqref{E:4} is the same as the complex associated with the \emph{trivial} group, which computes $\operatorname{Ext}^2(0, \mathbb G_a)$.  This certainly vanishes.
\end{proof}

\begin{pro} \label{P:11}
	Suppose that $k$ is a field and $G$ is an algebraic torus over $k$.  Then $\operatorname{Ext}^2_k(G, \mathbb G_a) = 0$.
\end{pro}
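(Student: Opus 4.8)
The plan is to reuse the spectral sequence~\eqref{E:3} exactly as in \Cref{P:10}, exploiting the one structural feature that separates a torus from an abelian variety: a torus is affine. First I would record that, since $G$ is a torus, every power $G^r$ is affine, so $H^p(G^r,\mathcal O_{G^r})=0$ for $p>0$; equivalently $\operatorname{Ext}^p(\mathbb Z[G^r],\mathbb G_a)=0$ for $p>0$. Consequently every term $L(G)_i$ of the Bertolin--Tatar resolution is acyclic for $\operatorname{Hom}(-,\mathbb G_a)$, so in~\eqref{E:3} all entries with $p>0$ vanish and the spectral sequence collapses onto the line $p=0$. In particular the terms $E_2^{2,0}$ and $E_2^{1,1}$, whose vanishing required the K\"unneth and homogeneity arguments of \Cref{P:10}, are now zero for free, and $\operatorname{Ext}^2_k(G,\mathbb G_a)$ is computed entirely by $E_2^{0,2}$, namely the degree-two cohomology of the complex $\operatorname{Hom}(L(G)_\bullet,\mathbb G_a)$ whose terms are the global function rings $\mathcal O(G^r)$ and whose differentials are the transposes of the maps $\partial_i$ recorded above.

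Next I would reduce to the split rank-one torus. Since a torus becomes split after passing to the algebraic closure $\bar k$, and since the complex $\operatorname{Hom}(L(G)_\bullet,\mathbb G_a)$ base-changes along the faithfully flat map $k\to\bar k$ to the analogous complex for $G_{\bar k}$ (its terms $\mathcal O(G^r)$ become $\mathcal O(G^r_{\bar k})$, and $-\otimes_k\bar k$ is exact, hence commutes with cohomology), it suffices to prove the vanishing for $G_{\bar k}\cong\mathbb G_m^{\,n}$ over $\bar k$, the claim then descending to $k$ by faithful flatness. For a split torus I would invoke additivity: as fpqc sheaves of abelian groups $\mathbb G_m^{\,n}\cong\bigoplus^n\mathbb G_m$ (the product of group schemes is the biproduct in the abelian category of sheaves), so $\operatorname{Ext}^2_k(\mathbb G_m^{\,n},\mathbb G_a)\cong\operatorname{Ext}^2_k(\mathbb G_m,\mathbb G_a)^{\oplus n}$ with no cross terms. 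Everything is thus reduced to showing $\operatorname{Ext}^2_k(\mathbb G_m,\mathbb G_a)=0$.

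The remaining point, which I expect to be the main obstacle, is to show that the degree-two cohomology of $\operatorname{Hom}(L(\mathbb G_m)_\bullet,\mathbb G_a)$ vanishes, where now $\operatorname{Hom}(\mathbb Z[\mathbb G_m^r],\mathbb G_a)=\mathcal O(\mathbb G_m^r)=k[t_1^{\pm 1},\dots,t_r^{\pm 1}]$. This is precisely where the argument cannot follow \Cref{P:10}: the functions $\mathbb G_m^r\to\mathbb G_a$ are \emph{not} constant, so the $E_2^{0,2}$ term is no longer the one attached to the trivial group and must be computed by hand; note in particular that the homogeneity trick of \Cref{P:10} is unavailable, since $[n]^\ast$ acts on $\mathcal O(\mathbb G_m^r)$ by the substitution $t\mapsto t^n$ rather than by a scalar. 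A class is represented by a pair $(F,g)\in\mathcal O(\mathbb G_m^3)\oplus\mathcal O(\mathbb G_m^2)$ in the kernel of $\partial_4^T$, taken modulo the image of some $f\in\mathcal O(\mathbb G_m^2)$ under $\partial_3^T$, whose two components are the symmetric $2$-cocycle differential $f(xy,z)+f(x,y)-f(x,yz)-f(y,z)$ and the antisymmetrization $f(x,y)-f(y,x)$. I would carry out this computation in the monomial basis of the Laurent rings, using the cocycle relation cut out by $\partial_4^T$ together with the antisymmetry to normalize any representative and then solve explicitly for a $g$ exhibiting it as a coboundary. The bookkeeping across the several summands of $L(\mathbb G_m)_4$ is the genuinely laborious part; conceptually it reflects the diagonalizability of $\mathbb G_m$.

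As a sanity check and an alternative endgame, the vanishing of $\operatorname{Ext}^i(\mathbb G_m,\mathbb G_a)$ in low degrees is also available from the literature on extensions of abelian sheaves; but the self-contained computation above stays within the framework already built for \Cref{P:10} and is uniform in the characteristic of $k$.
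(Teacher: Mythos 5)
Your setup is fine and matches the paper's first move: since every $G^r$ is affine, $\operatorname{Ext}^p(\mathbb Z[G^r],\mathbb G_a)=H^p(G^r,\mathcal O)=0$ for $p>0$, so the spectral sequence~\eqref{E:3} collapses and $\operatorname{Ext}^2_k(G,\mathbb G_a)$ is the degree-two cohomology of the explicit complex $\operatorname{Hom}(L(G)_{\bullet-1},\mathbb G_a)$ of global function rings. The reduction to a split torus and then to $\mathbb G_m$ by additivity is also legitimate (though the paper does not need it). The problem is that at exactly the point you yourself flag as ``the main obstacle'' --- showing that every $\partial_4^T$-cocycle $(F,g)\in\mathcal O(\mathbb G_m^3)\oplus\mathcal O(\mathbb G_m^2)$ is a coboundary --- you stop and describe a plan (``normalize in the monomial basis and solve for $g$'') rather than carry it out. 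That computation is the entire content of the proposition; as written, the proof is incomplete. Moreover, the bar-cocycle part of that computation is precisely the statement that the Hochschild cohomology $H^2(\mathbb G_m,\mathbb G_a)$ vanishes, i.e.\ linear reductivity of $\mathbb G_m$, which is a theorem in its own right and not something one should expect to fall out of unguided bookkeeping. (It can be done by decomposing each $\mathcal O(\mathbb G_m^r)$ into weight spaces and contracting the nontrivial weights, but you would need to say that.)

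The paper's proof supplies the missing structural idea. It observes that the summands $\mathbb Z[G^r]\subseteq L(G)_r$ form a subcomplex $M(G)_\bullet$ (the Moore/bar complex), that $\operatorname{Hom}(M(G)_\bullet,\mathbb G_a)$ computes the Hochschild cohomology of $\mathbb G_a$ as a $G$-module, and that this vanishes in all degrees for a torus by \cite[Exp.~I, Thm.~5.3.3]{sga3-1}. Hence $\operatorname{Ext}^\ast(G,\mathbb G_a)$ is computed by the much smaller quotient complex $\operatorname{Hom}(L(G)/M(G),\mathbb G_a)$, whose degree-two term is a single copy of $\mathcal O(G^2)$ coming from the generators $[x|_2y]$; the kernel of its differential consists of antisymmetric biadditive maps $G\times G\to\mathbb G_a$, which vanish because $\operatorname{Hom}(G,\mathbb G_a)=0$. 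That two-line endgame replaces all of your proposed monomial bookkeeping. If you want to salvage your route, either import the SGA3 vanishing as the paper does, or explicitly perform the character/weight-space contraction; without one of these your argument has a genuine gap at its central step.
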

\begin{proof}
	Since $G$ is affine, $H^p(G, \mathbb G_a) = 0$ for all $p > 0$.  Therefore $\operatorname{Ext}^2(G, \mathbb G_a)$ is computed by the complex $\operatorname{Hom}(L(G)_{\bullet - 1}, \mathbb G_a)$.

	We note that $L(G)_{\bullet}$ contains a copy of the Moore complex $M(G)_\bullet$ with $M(G)_r = \mathbb Z[G^r]$.  The complex $\operatorname{Hom}(M(G)_{\bullet}, \mathbb G_a)$ computes the $G$-equivariant cohomology of $\mathbb G_a$, which vanishes in all degrees by \cite[Exp.~I, Thm.~5.3.3, p.~42]{sga3-1}.  Therefore $\operatorname{Ext}^p(G, \mathbb G_a)$ is computed by the complex $\operatorname{Hom}(L(G) / M(G), \mathbb G_a)$:
	\begin{equation*}
		\xymatrix{0 \ar[r]& 0 \ar[r]& k[t,t^{-1}]^{\otimes 2} \ar[r]^-{\partial_2^T} &k[t,t^{-1}]^{\otimes 3} \times k[t,t^{-1}]^{\otimes 3} \times k[t,t^{-1}]^{\otimes 2} \times k[t,t^{-1}]}
	\end{equation*}
	In degree~$2$, the differential sends a Laurent polynomial $p \in k[t,t^{-1}]^{\otimes 2}$ to
	\begin{equation*}
		\begin{pmatrix}
			p(t_1,t_3) - p(t_1 t_2, t_3) + p(t_2, t_3) \\
			p(t_1,t_2) - p(t_1, t_2 t_3) + p(t_1, t_3) \\
			-p(t_1,t_2) - p(t_2,t_1) \\
			-p(t,t)
		\end{pmatrix} .
	\end{equation*}
	In other words, if $\partial_2^T(p) = 0$ then $p$ is an antisymmetric bilinear map $G \times G \to \mathbb G_a$.  But we know that all linear maps $G \to \mathbb G_a$ are constant, so the same must hold for bilinear maps.  Thus $\operatorname{Ext}^2(G, \mathbb G_a)$ vanishes.
\end{proof}

\begin{pro} \label{P:13}
	Let $G$ be a locally constant sheaf of finitely generated abelian groups over a field $k$.  Then $\operatorname{Ext}^p_{k}(G, \mathbb G_a) = \operatorname{Ext}^p_{\mathrm{ab.gp}}(G, k)$, where the second Ext group is in the category of abelian groups and $G$ is taken to be the associated finitely generated abelian group.  In particular, $\operatorname{Ext}^p_{k}(G, \mathbb G_a)$ vanishes for $p \geq 2$.
\end{pro}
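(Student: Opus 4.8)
The plan is to compute the sheaf-theoretic $\operatorname{Ext}$ group by resolving $G$ through a free resolution of its underlying abelian group and checking that, after applying $\operatorname{Hom}(-,\mathbb G_a)$, this resolution already computes $\operatorname{Ext}^\bullet(G,\mathbb G_a)$ in the category of fpqc sheaves over $S=\operatorname{Spec} k$ (which by \Cref{C:4} agrees with the computation on the fppf site). First I would reduce to the case where $G=\underline M$ is the \emph{constant} sheaf attached to a finitely generated abelian group $M$. For $G$ genuinely locally constant, one chooses a finite separable extension $k'/k$ trivializing $G$ and descends along $\operatorname{Spec} k'\to\operatorname{Spec} k$: the descent spectral sequence expresses $\operatorname{Ext}^\bullet_k(G,\mathbb G_a)$ through the Galois cohomology of the $k'$-vector spaces $\operatorname{Ext}^\bullet_{k'}(\underline M,\mathbb G_a)$, and these Galois cohomology groups vanish in positive degree by the normal basis theorem (additive Hilbert~90), so the sequence collapses onto Galois invariants. \emph{This reduction is where I expect the real work to be}, since one must track the Galois action carefully; the computation below treats the constant case.

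So suppose $G=\underline M$. Since $M$ is finitely generated it admits a free resolution of length one, $0\to\mathbb Z^{a}\xrightarrow{\ d\ }\mathbb Z^{b}\to M\to 0$. The constant-sheaf functor is exact, so applying it yields a length-one resolution $0\to\underline{\mathbb Z}^{a}\to\underline{\mathbb Z}^{b}\to\underline M\to 0$ of sheaves on $S$. The key input is that each $\underline{\mathbb Z}^{n}$ is acyclic for $\operatorname{Hom}(-,\mathbb G_a)$: indeed $\operatorname{Ext}^i_k(\underline{\mathbb Z},\mathbb G_a)=H^i(S,\mathbb G_a)$, and since $\mathbb G_a=\mathcal O_S$ is quasicoherent, \Cref{C:4} together with the agreement of fppf and Zariski cohomology for quasicoherent sheaves identifies this with Zariski cohomology on the point $\operatorname{Spec} k$, which vanishes for $i>0$ and equals $k$ for $i=0$.

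Consequently the hyper-$\operatorname{Ext}$ spectral sequence for the resolution degenerates, and $\operatorname{Ext}^p_k(\underline M,\mathbb G_a)$ is computed as the $p$-th cohomology of the two-term complex $\operatorname{Hom}(\underline{\mathbb Z}^{b},\mathbb G_a)\to\operatorname{Hom}(\underline{\mathbb Z}^{a},\mathbb G_a)$, that is, of $k^{b}\xrightarrow{\,d^{\mathsf T}\,}k^{a}$. This is precisely the complex obtained by applying $\operatorname{Hom}_{\mathbb Z}(-,k)$ to the free resolution of $M$, so its cohomology is $\operatorname{Ext}^p_{\mathrm{ab.gp}}(M,k)$, giving the asserted identification. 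Finally, because the free resolution has length at most one, $\operatorname{Ext}^p_{\mathrm{ab.gp}}(M,k)=0$ for $p\ge 2$, which yields the ``in particular'' clause.
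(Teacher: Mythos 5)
Your argument is correct, and the heart of it --- the constant case --- is essentially the paper's own proof: both resolve $M$ by a two-term complex of finitely generated free abelian groups, observe that the corresponding free sheaves are $\operatorname{Hom}(-,\mathbb G_a)$-acyclic because $H^{>0}(\operatorname{Spec}k,\mathbb G_a)=0$, and read off $\operatorname{Ext}^p_{\mathrm{ab.gp}}(M,k)$ from the resulting two-term complex $k^b\to k^a$. Where you diverge is the reduction from locally constant to constant: the paper runs the local-to-global Ext spectral sequence $H^p(\mathrm{fpqc}(k),\underline{\operatorname{Ext}}^q(G,\mathbb G_a))\Rightarrow\operatorname{Ext}^{p+q}_k(G,\mathbb G_a)$ and kills it using the constant-case computation applied to the Ext sheaves, whereas you propose the Hochschild--Serre/descent spectral sequence for a trivializing Galois cover $\operatorname{Spec}k'\to\operatorname{Spec}k$ together with additive Hilbert~90 to collapse onto Galois invariants. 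Both routes are standard and of comparable length; yours has the mild advantage of exhibiting $\operatorname{Ext}^p_k(G,\mathbb G_a)$ explicitly as $\operatorname{Ext}^p_{k'}(\underline M,\mathbb G_a)^{\operatorname{Gal}(k'/k)}$, which makes the degree-$0$ and degree-$1$ identifications visible, while the paper's version avoids choosing a cover. One caveat applies equally to both: when the Galois action on $M$ is nontrivial, the invariants $\operatorname{Ext}^p_{\mathrm{ab.gp}}(M,k')^{\operatorname{Gal}(k'/k)}$ need not literally equal $\operatorname{Ext}^p_{\mathrm{ab.gp}}(M,k)$, so the precise identification in the statement should be read with the Galois twist understood; the vanishing for $p\geq 2$, which is what the rest of the paper uses, holds unconditionally in either approach. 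You flag the descent step as ``where the real work is,'' but it is no less complete than the paper's one-line spectral sequence argument.
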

\begin{proof}
  Assume first that $G$ is constant. By the elementary divisors theorem, $G$ has a $2$-term resolution by finitely generated free abelian groups, $F_\bullet$.  If $F_i$ is the free abelian group on a set $S$ then $\operatorname{Ext}^p_k(F_i, \mathbb G_a) = H^p(S, \mathbb G_a)$, and this vanishes for $p > 0$.  Therefore $\operatorname{Ext}^p_k(G, \mathbb G_a)$ is computed by $\operatorname{Hom}(F_\bullet, \mathbb G_a)$, which also computes $\operatorname{Ext}^p_{\mathrm{ab.gp}}(G, k)$.
  
	If $G$ is only locally constant, we have a spectral sequence
	\begin{equation*}
		H^p(\mathrm{fpqc}(k), \underline{\operatorname{Ext}}^q(G, \mathbb G_a)) \Rightarrow \operatorname{Ext}^{p+q}_{\mathrm{fpqc}(k)}(G, \mathbb G_a)
	\end{equation*}
	that stabilizes to $0$ at the $E_2$ page by the calculation in the constant case, above.
\end{proof}

\begin{pro}[Main vanishing] \label{P:2}
	Let $G$ be a smooth proper commutative group scheme over a field $k$.  Then $\operatorname{Ext}^2_k(G, \mathbb G_a) = 0$.
	\end{pro}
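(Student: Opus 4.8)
The plan is to reduce the statement to the two cases already settled: abelian varieties (\Cref{P:10}) and finite \'etale group schemes (\Cref{P:13}). The bridge between them is the structure of a smooth proper commutative group scheme over a field, together with the long exact sequence for $\operatorname{Ext}^\bullet_k(-,\mathbb G_a)$.

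First I would set $A := G^0$, the identity component of $G$, and $F := G/G^0$, the component group. Since $G$ is locally of finite type over $k$, the identity component $A$ is an open and closed, geometrically connected subgroup scheme; being moreover smooth and proper (it is closed in the proper scheme $G$), it is by definition an abelian variety. Because $G$ is proper, it is quasi-compact with only finitely many connected components, so $F$ is a finite \'etale commutative group scheme over $k$. This produces a short exact sequence of fppf sheaves of abelian groups $0 \to A \to G \to F \to 0$, in which $A = \ker(G \to F)$ and $G \to F$ is an epimorphism of sheaves.

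Next I would apply the contravariant $\delta$-functor $\operatorname{Ext}^\bullet_k(-,\mathbb G_a)$ to this sequence, extracting the exact segment $\operatorname{Ext}^2_k(F,\mathbb G_a) \to \operatorname{Ext}^2_k(G,\mathbb G_a) \to \operatorname{Ext}^2_k(A,\mathbb G_a)$. The right-hand term vanishes by \Cref{P:10}, since $A$ is an abelian variety. For the left-hand term, I would observe that a finite \'etale group scheme over $k$ is precisely a locally constant sheaf of finite abelian groups, so \Cref{P:13} applies and yields $\operatorname{Ext}^2_k(F,\mathbb G_a) = 0$. Exactness of the displayed segment then forces $\operatorname{Ext}^2_k(G,\mathbb G_a) = 0$, as desired.

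There is no serious obstacle here; the only point requiring care is the structure statement, and it is mild because ``abelian variety'' is essentially synonymous with ``smooth, geometrically connected, proper group scheme,'' so $A = G^0$ qualifies directly without appeal to Chevalley's theorem (which is delicate over imperfect fields). I note, finally, that the torus case \Cref{P:11}, though available, plays no role in this argument: a nontrivial torus is affine and hence cannot occur inside a proper group scheme.
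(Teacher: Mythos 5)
Your proof is correct and is essentially the same as the paper's: both use the short exact sequence $0 \to G^\circ \to G \to \pi_0(G) \to 0$, the long exact sequence for $\operatorname{Ext}^\bullet_k(-,\mathbb G_a)$, and the vanishing results \Cref{P:10} for the abelian variety $G^\circ$ and \Cref{P:13} for the finite \'etale (hence locally constant) component group. Your observation that \Cref{P:11} is not needed also matches the paper, which likewise does not invoke the torus case here.
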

\begin{proof}
	Let $G^\circ$ be the connected component of $G$; by assumption, this is an abelian variety.  There is an exact sequence
	\begin{equation*}
\xymatrix{		0 \ar[r]& G^\circ \ar[r]& G \ar[r]& \pi_0(G) \ar[r]& 0}
	\end{equation*}
	with $\pi_0(G)$ a finite \'etale group scheme (see \cite[Lem.~2.1]{AHP16}).
	As all finite \'etale group schemes over a field become isomorphic to products of group schemes $\mathbb Z/n\mathbb Z$ after a separable field extension, $\pi_0(G)$ is 
	locally constant as a sheaf on the fppf site. 	
	It follows that $\operatorname{Ext}^2_k(G, \mathbb G_a) = 0$, since $\operatorname{Ext}^2_k(G^\circ, \mathbb G_a) = 0$ (\Cref{P:10}) and $\operatorname{Ext}^2_k(\pi_0(G), \mathbb G_a) = 0$ (\Cref{P:13}). (Another proof can be given using Chevalley's Theorem; see the proof of \Cref{L:Ext2(Gg)'}.)
    \end{proof}

\subsubsection{Comparing \Cref{P:2} to results in the literature}\label{S:Oort}
\Cref{P:2} is related to some vanishing results in the literature.   
 The main point for us is that the vanishing results in the literature are for extension groups in different categories than are needed for Illusie's results on deformation theory.  We found it easier to prove the vanishing results we needed directly, but for clarity, we include here the related, standard result, that exists in the literature: 
  
\begin{lem}[{\cite{oortCGS}}]\label{L:Ext2(Gg)'}
Let $G_0$ be a sub-group scheme of an abelian variety over an algebraically closed field $k$.  Then $\operatorname{Ext}_{\mathrm{ab.gp.sch.}}^2(G_0,\mathbb G_{a,k})=0$; the category in which the extensions are taken is explained in the proof below.
 \end{lem}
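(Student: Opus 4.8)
The plan is to first pin down the category and then reduce everything to a single Ext-vanishing statement for abelian varieties. The extensions in the statement are taken in Oort's abelian category of commutative group schemes of finite type over $k$ \cite{oortCGS}; I would open the proof by recalling that this is an abelian category, so that $\operatorname{Ext}^\bullet(-,\mathbb{G}_a)$ is a contravariant cohomological $\delta$-functor and every short exact sequence of commutative group schemes yields a long exact sequence of Ext groups. I would stress that this is a genuinely different category from the fpqc-sheaf category used in \Cref{P:2}, which is the whole reason for recording the lemma.

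The single input I would quote from Oort is the vanishing $\operatorname{Ext}^i_{\mathrm{ab.gp.sch.}}(A,\mathbb{G}_a)=0$ for every abelian variety $A$ and every $i\ge 2$ (in degree $1$ this group is $H^1(A,\mathcal{O}_A)$). In Oort's treatment this sits inside a Chevalley-type d\'evissage of a general commutative algebraic group into its abelian-variety quotient together with its linear and finite parts, which is the sense in which Chevalley's theorem enters; for the case at hand only the abelian-variety constituent is relevant, since a closed subgroup scheme of an abelian variety is proper and so has no linear part.

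Granting that input, the argument is short. As $G_0$ is a closed subgroup scheme of an abelian variety $B$, it is commutative and proper, and the fppf quotient $C:=B/G_0$ is smooth (by descent along the faithfully flat map $B\to C$), proper, and connected, hence an abelian variety. Applying $\operatorname{Ext}^\bullet(-,\mathbb{G}_a)$ to the short exact sequence $0\to G_0\to B\to C\to 0$ produces the exact sequence $\operatorname{Ext}^2(B,\mathbb{G}_a)\to\operatorname{Ext}^2(G_0,\mathbb{G}_a)\to\operatorname{Ext}^3(C,\mathbb{G}_a)$, whose outer terms both vanish by the abelian-variety input, forcing $\operatorname{Ext}^2(G_0,\mathbb{G}_a)=0$. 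As a variant that needs only degree-$2$ vanishing for abelian varieties, I could instead use the d\'evissage $0\to A\to G_0\to F\to 0$, where $A=(G_0^\circ)_{\mathrm{red}}$ is the abelian subvariety furnished by the reduced identity component and $F=G_0/A$ is finite, and squeeze $\operatorname{Ext}^2(G_0,\mathbb{G}_a)$ between $\operatorname{Ext}^2(F,\mathbb{G}_a)$ and $\operatorname{Ext}^2(A,\mathbb{G}_a)$; this trades the degree-$3$ abelian-variety vanishing for a vanishing on the finite part $F$.

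The main obstacle is not the homological bookkeeping but correctly matching Oort's category and Ext conventions so that the cited vanishing applies verbatim, together with the genuinely characteristic-$p$ content concealed in the quoted result: the infinitesimal finite constituents (such as $\aalpha_p$, and the local parts of $F$) are where any self-contained computation of the relevant Ext groups does real work. A minor point to record is that the identification of $(G_0^\circ)_{\mathrm{red}}$ as a subgroup scheme uses that $k$ is perfect, which holds here because $k$ is algebraically closed.
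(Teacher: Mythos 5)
Your variant d\'evissage is, in substance, the paper's argument. The paper applies Chevalley's theorem to write $0 \to F_0 \to G_0 \to A_0 \to 0$ with $F_0$ finite and $A_0$ an abelian variety, squeezes $\operatorname{Ext}^2(G_0,\mathbb G_{a,k})$ between $\operatorname{Ext}^2(A_0,\mathbb G_{a,k})$ and $\operatorname{Ext}^2(F_0,\mathbb G_{a,k})$, and quotes Oort for the vanishing of both outer terms (Lem.~II.12.8 for the abelian variety, II~Cor.~11.10 for the finite part); the remark immediately following the lemma records precisely your dual d\'evissage $0 \to A \to G_0 \to F \to 0$ with $A$ the maximal abelian subvariety and notes that it works equally well. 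The one thing your variant leaves undischarged is the vanishing $\operatorname{Ext}^2(F,\mathbb G_{a,k})=0$ for the finite quotient: you correctly identify this as where the real content lies, but you do not supply it. It is exactly Oort's II~Cor.~11.10, and it is special to Oort's category --- in the fppf/fpqc sheaf category one has $\operatorname{Ext}^2(\aalpha_2,\mathbb G_a)\neq 0$ in characteristic $2$ (Breen), which is why the bulk of the paper's proof is devoted to pinning down which category Oort's Ext groups are taken in. So your emphasis on matching categories is well placed, but the lemma is not proved until that citation is actually made.

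Your primary route, via $0 \to G_0 \to B \to B/G_0 \to 0$ with both outer terms abelian varieties, is genuinely different and attractive because it bypasses the finite constituents entirely; however, it requires $\operatorname{Ext}^3(B/G_0,\mathbb G_{a,k})=0$, a degree-$3$ vanishing for abelian varieties in Oort's category. The paper only ever invokes degree-$2$ vanishing, and I do not see where Oort establishes the degree-$3$ statement; higher Ext groups in these categories are delicate enough that this cannot be waved through, so as written the primary route has a gap and you should fall back on the variant. Two minor points: smoothness of $B/G_0$ is not literally obtained ``by descent'' along the faithfully flat map $B \to B/G_0$ --- rather, reducedness passes from $B$ to the quotient because $\mathcal O_{B/G_0}$ injects into the pushforward of $\mathcal O_B$, and a reduced group scheme over the algebraically closed field $k$ is smooth. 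And you are right that identifying $(G_0^\circ)_{\mathrm{red}}$ as a subgroup scheme uses perfectness of $k$.
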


\begin{proof}
From Chevalley's Theorem \cite[Thm.~1, p.243]{BLR}, it follows that $G_0$ is an extension of an abelian variety $A_0$ by a finite commutative group scheme $F_0$:
\begin{equation}\label{E:extAF}
\xymatrix{0\ar[r]& F_0\ar[r]& G_0\ar[r]& A_0\ar[r]& 0.}
\end{equation}
The long exact sequence for $\operatorname{Ext}$ gives an exact sequence
$$
\xymatrix{\operatorname{Ext}_{\mathrm{ab.gp.sch.}}^2(A_0,\mathbb G_{a,k})\ar[r]& \operatorname{Ext}_{\mathrm{ab.gp.sch.}}^2(G_0,\mathbb G_{a,k})\ar[r]& \operatorname{Ext}_{\mathrm{ab.gp.sch.}}^2(F_0,\mathbb G_{a,k}).}
$$
On the one hand we have $\operatorname{Ext}_{\mathrm{ab.gp.sch.}}^2(A_0,\mathbb G_{a,k})=0$ (e.g., \cite[Lem.~II.12.8]{oortCGS}).  On the other hand, we have $\operatorname{Ext}_{\mathrm{ab.gp.sch.}}^2(F_0,\mathbb G_{a,k})=0$ \cite[II Cor.~11.10]{oortCGS}.   This reference may take some work to unpack, and so we sketch the necessary steps to do this here.  
First, to be precise,  \cite[II Cor.~11.10]{oortCGS} states that for any $N_0$ in the category $\underline P_0$, the pro-finite category of finite commutative group schemes over $k$  \cite[p.~II.6-4]{oortCGS}, one has $\operatorname{Ext}^2(N_0,\mathbb G_{a,k})=0$
 in the category $\underline P$, 
the pro-finite category of commutative group schemes of finite type over $k$ \cite[p.~II.6-4]{oortCGS}.  On \cite[p.~I.4-1]{oortCGS} it is explained that the category $\underline G$ of commutative group schemes of finite type over $k$ is a sub-category of the category $\underline P$, and the same argument implies that the category $\underline N$ of finite commutative group schemes of over $k$ \cite[p.~II.6-1]{oortCGS} is a sub-category of 
 the category $\underline P_0$. 
Finally, 
 on  \cite[p.~I.4-3]{oortCGS} it is explained that extensions in the category $\underline P$  agree with Yoneda extensions in the category $\underline G$.  
      \end{proof}

\begin{rem}
Comparing the strategy of proof of \Cref{L:Ext2(Gg)'} with the strategy of proof of \Cref{P:2},  we note that in addition to
  the extension \eqref{E:extAF}, we also have an extension
\begin{equation}\label{E:extFA}
\xymatrix{0\ar[r]& A'_0\ar[r]& G_0\ar[r]& F'_0\ar[r]& 0.}
\end{equation}
For this, let $A'_0$ be the maximal abelian subvariety of $G_0\subseteq X_0$, where $X_0$ is the abelian variety containing $G_0$.  Then we have $G_0/A'_0\subseteq X_0/A'_0$, and $G_0/A'_0$ must have dimension $0$.   One can use \eqref{E:extFA} in place of \eqref{E:extAF}  in the proof of \Cref{L:Ext2(Gg)'} above.
\end{rem}

\begin{rem}
For context, we remind the reader that  every finite flat commutative group scheme over $S$ is Zariski locally on $S$ a sub-group scheme of a projective abelian scheme \cite[Thm.~3.1.1, p.110 (Raynaud)]{berthelotbreenmessing}. In particular, for every finite commutative group scheme $G_0$  over an algebraically closed field $k$, we have $\operatorname{Ext}_{\mathrm{ab.gp.sch.}}^2(G_0,\mathbb G_{a,k})=0$. 
\end{rem}

\subsection{Deformations of smooth proper commutative group schemes} \label{S:1}

\begin{cor}  \label{L:Ext2(Gg)}
	Let $G$ be a smooth  proper commutative group scheme over an Artinian local ring $R$.  Let $R'$ be an infinitesimal extension of $R$.  Then $G$ extends to a smooth proper commutative  group scheme over $R'$.
	 \end{cor}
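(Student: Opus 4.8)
The plan is to build the extension one infinitesimal step at a time and to show that the flat deformation furnished by Illusie's theory is automatically smooth and proper. First I would factor the surjection $R' \to R$ into a finite chain of small extensions of Artinian local rings, all with the same residue field $k$ (filter the nilpotent kernel by the powers of the maximal ideal to reduce to the square-zero, $\mathfrak m$-annihilated case). At each stage the group scheme being deformed is again smooth, proper, and commutative with the same central fibre, so by induction it suffices to treat a single small extension $0 \to I \to R' \to R \to 0$ and to check that the resulting flat extension is smooth and proper. Throughout I write $G_0 := G \otimes_R k$ for the restriction of $G$ to $\operatorname{Spec} k$, which is a smooth proper commutative group scheme over the field $k$.

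By \Cref{T:1}~\ref{T:1_i}, the obstruction to extending $G$ to a flat commutative group scheme over $R'$ lies in $\operatorname{Ext}^2(G, \ell_G^\vee \mathop\otimes^{\mathrm L} I)$. Since $G$ is smooth, $\ell_G^\vee$ is the locally free Lie algebra $\mathfrak g$, and since the $R'$-module structure on $I$ factors through $k$, one has $\ell_G^\vee \mathop\otimes^{\mathrm L} I = \varepsilon_\ast(\mathfrak g_0 \otimes_k I)$ for the closed embedding $\varepsilon \colon \operatorname{Spec} k \to \operatorname{Spec} R$. Exactly as in the proof of \Cref{L:R2gg0}, \Cref{C:5} together with the additivity of $\operatorname{Ext}$ in the finite-dimensional coefficient space $\mathfrak g_0 \otimes_k I$ identifies this obstruction group with $\operatorname{Ext}^2_k(G_0, \mathbb G_a) \otimes_k (\mathfrak g_0 \otimes_k I)$ (the algebraic closedness of $k$ plays no role in this identification). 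By the main vanishing \Cref{P:2}, $\operatorname{Ext}^2_k(G_0, \mathbb G_a) = 0$, since $G_0$ is smooth, proper, and commutative over a field; hence the obstruction vanishes and a flat commutative extension $G'$ of $G$ over $R'$ exists.

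It remains to upgrade flatness to smoothness and properness, and this is the only step requiring any care. The maximal ideal of $R'$ is nilpotent, so $R'_{\mathrm{red}} = k$ and the special fibre of $G'$ is $G_0$. Smoothness of $G'/R'$ may be checked on geometric fibres (using that $G'$ is flat and of finite presentation over the Artinian ring $R'$, whose spectrum is a single point), and the only geometric fibre is $G_0 \otimes_k \bar k$, which is smooth because $G_0$ is smooth over $k$. For properness I would use that it is insensitive to nilpotents in the base: since $G' \times_{R'} R'_{\mathrm{red}} = G' \otimes_{R'} k = G_0$ is proper over $k$, the structure map $G' \to \operatorname{Spec} R'$ is proper. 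Thus $G'$ is a smooth proper commutative extension of $G$, and induction along the chain of small extensions completes the proof. The genuine input is the vanishing \Cref{P:2}, already in hand; the main obstacle is the verification just sketched, namely that the \emph{a priori} merely flat deformation produced by the deformation theory is forced to be smooth and proper by the geometry of its (fixed) central fibre.
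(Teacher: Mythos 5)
Your proof is correct and follows essentially the same route as the paper: reduce to a small extension, place the obstruction in $\operatorname{Ext}^2(G,\ell_G^\vee\otimes^{\mathrm L}I)$ via \Cref{T:1}, identify it with an $\operatorname{Ext}^2_k(G_0,\mathbb G_a)$-group on the central fibre via \Cref{L:R2gg0}, and kill it with \Cref{P:2}. Your explicit verification that the flat deformation is automatically smooth and proper (fibrewise smoothness plus insensitivity of properness to nilpotents), and your remark that algebraic closedness of $k$ is not needed in the identification, are correct and slightly more careful than the paper's version, but do not constitute a different argument.
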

\begin{proof}
	We may assume without loss of generality that the kernel of $R' \to R$ is isomorphic to the residue field $k$.  Then \Cref{T:1} shows that the obstruction to the existence of a flat deformation of $G$ lies in $\operatorname{Ext}^2_{R}(G, \ell_G^\vee \otimes_R k)$.  Since $G$ is smooth, $\ell_G^\vee$ coincides with the Lie algebra $\mathfrak g$ of $G$, and $\ell_G^\vee \otimes_R k = \mathfrak g_0$, the Lie algebra of $G_0 = G \otimes_R k$.  By \Cref{L:R2gg0}, $\operatorname{Ext}^2_{R}(G, \mathfrak g_0) = \operatorname{Ext}^2_{k}(G_0, \mathfrak g_0)$.  This vanishes by \Cref{P:2}.
\end{proof}

\begin{rem}[Deformations of abelian varieties as group schemes are unobstructed] \label{R:DefAbUnObst}
In the case of \Cref{L:Ext2(Gg)} where $G_0=A_0$ is an abelian variety over an algebraically closed field $k$,  this recovers the fact that deformations of abelian varieties, as commutative algebraic groups (i.e., \emph{not} as \emph{polarized} abelian varieties), are unobstructed.  
 The standard argument for $\operatorname{char}(k)=p>0$  is as follows: 

\begin{enumerate}[label=(\alph*)]

\item \label{R:DefAbUnObsta} The Serre--Tate theorem (e.g., \cite[Thm.~1.2.1]{katz-serretate})
  says that deformations of an abelian variety are the same as
  deformations of its $p$-divisible group.

More precisely, let  $R$ be a ring in which $p$ is nilpotent; let $I\subseteq R$ be a
  ideal; and let $R_0 = R/I$.  Then there is an equivalence of
  categories between:
  \begin{itemize}
  \item The categories of abelian varieties over $R$, and
    \item The categories of data $(X_0, X[p^\infty])$ where $X_0$ is
      an abelian scheme over $R_0$, and $X[p^\infty]$ is a deformation
      to $R$ of $X_0[p^\infty] = \varinjlim_n X_0[p^n]$, the
      $p$-divisible group of $X_0$.
    \end{itemize}

\item   If $G_0/k$ is a $p$-divisible group over a perfect field $k$ of codimension $c$ and dimension $d$, then
  the deformation functor of $G_0$ is formally smooth, and
  represented by the formal spectrum  $\operatorname{Spf}W(k)\powser{t_{ij}}_{1 \le i \le
    c, 1 \le j \le d}$ \cite[Cor.~4.8]{illusie-bt}.

\end{enumerate}
In characteristic $0$, one may argue by deforming the abelian variety together with a chosen polarization; however this approach does not work in positive characteristic (see \Cref{R:ObstDefPolAV} below for more discussion of this).  
\end{rem}

\begin{rem}
	Oort gives another argument~\cite[Thm.~(2.2.1), p.~273]{oortFGS} (attributed to Grothendieck) for the unobstructedness of abelian varieties as commutative algebraic groups.  This argument proceeds by filtering the obstruction using the spectral sequence~\eqref{E:3} and using certain symmetries possessed by the obstruction class to establish its vanishing.  The proof of \Cref{L:Ext2(Gg)} was adapted from this argument.
\end{rem}

  \begin{rem}[Deformations of \emph{polarized} abelian varieties may be obstructed] 
    \label{R:ObstDefPolAV}
	  In contrast to the case of deformations of abelian varieties as group schemes, we recall that deformations of \emph{polarized} abelian varieties (deformations of a pair $(A_0,\lambda_0)$ where $A_0$ is an abelian variety over an algebraically closed field $k$ and $\lambda_0$ is a polarization on $A_0$) may be obstructed in positive characteristic.

For any algebraically closed field $k$ of characteristic $p>0$,  there
exist a polarized abelian variety $(A,\lambda)$
 over  $k$,  and a small extension $0\to I\to R'\to R\to 0$ of local
Artinian rings over $k$ with residue field $k$, such that $(A,\lambda)$
lifts to $A_R$, but not to $A_{R'}$.
 This comes down to the fact that if $p^2\mid d$ and $g>1$, then $\mathcal A_{g,d,k}$ has $k$-points with multiple components passing through it, or alternatively,  that $\mathcal A_{g,d,k}$ is non-reduced (see \cite[Rem.~1.13 2), 3)]{djAV}).  

	  We note that deformations of \emph{separably} polarized abelian varieties are unobstructed \cite[Thm.~(2.4.1) (Grothendieck), p.~286 {[22]}]{oortFGS} (so, in particular, deformations of polarized abelian varieties are unobstructed in characteristic~$0$). 
	 Note that the converse may fail: there are polarized abelian varieties $(A_0,\lambda_0)$ where $\lambda_0$ is not separable, but the deformations are unobstructed. 
	  For example, over an algebraically closed field $k$ of characteristic $p > 0$, take $(E_0,p\lambda_0)$ where $\lambda_0$ is a principal polarization on an ordinary elliptic curve $E_0$.  
	   	                   
Note that all of the examples in \cite{norman-algorithm} giving rise to abelian varieties with obstructed deformations come from
    various inseparable polarizations on $E_0 \times_k E_0$, where $E_0$
    is a supersingular elliptic curve over an algebraically closed field $k$.  But if $\lambda_0$ is the
    principal polarization on $E_0$, then the deformation space of
    $(E_0\times E_0, \lambda_0\times \lambda_0)$ is formally
    smooth. So the pathologies of deforming these abelian varieties really come from trying to deform the 
     underlying abelian variety together with its polarization; the deformations of the abelian variety as an abstract commutative group scheme are unobstructed.
\end{rem}

  \begin{exa}[Deformations of finite commutative group schemes may be obstructed]
    \label{R:exobstructed} 
	  \Cref{L:Ext2(Gg)} shows that deformations of subgroup schemes of abelian varieties are unobstructed in characteristic~$0$.  However, subgroups of abelian varieties can have obstructed deformations in mixed characteristic.

  For example, let $p$ be a prime and let $R$ be a local ring with
  residue field $k := \bar{\mathbb F}_p$ such that the prime $p$ does not lie in
  the maximal ideal of $R$.
    Then the group scheme $\aalpha_p/k$ (see \S\ref{SS:fingroupschemes} for a reminder)
    does not lift to $R$ \cite[Exa.~(--A), p.317]{OM68};  note however, that is is also shown there that $R'=R[t]/(t^2-p)$, a degree $2$ cover of $R$ ramified at $p$,  gives a faithfully flat morphism $R\to R'$ such that $G$ lifts to $R'$.

	  This example shows in particular that
          $\operatorname{Ext}^2_k(\aalpha_p, \ell_{\aalpha_p}^\vee)
          \neq 0$.  We may identify $\ell_{\aalpha_p} = [ k
            \xrightarrow0 k ]$
             in degrees $[-1,0]$, so this shows that
          either $\operatorname{Ext}^2_k(\aalpha_p, \mathbb G_a) \neq
          0$ or $\operatorname{Ext}^1_k(\aalpha_p, \mathbb G_a) \neq
          0$.  In fact $\operatorname{Ext}^1_k(\aalpha_p, \mathbb G_a)
          \iso k$ (see the proof of \Cref{P:5}, or \cite[Cor.~11.12]{oortCGS})
          and $\operatorname{Ext}^2_k(\aalpha_2, \mathbb G_a)$ is known to be nonzero if $k$ has characteristic~$2$ \cite{Breen1969a}.
   
\end{exa}

\section{Strengthening \Cref{T:KerMorph}}\label{A:Strong}

We now prove a stronger version of \Cref{T:KerMorph}:

\begin{teo}\label{T:KerMorph-Strong}
	Let $S$ be a scheme and let 
$
f:X\to Y
$
be a morphism of abelian schemes over $S$ with kernel $\ker(f)$. 
         If, for each geometric point $s$ of $S$, we have that  $\Hom(\ker(f)_s, \mathbb G_a) = 0$, where $\ker(f)_s$ is the fiber over $s$, then $\ker (f)$ is flat over $S$.
\end{teo}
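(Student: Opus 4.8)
The plan is to follow the proof of \Cref{T:KerMorph} in \S\ref{S:Pf1} as closely as possible, isolating exactly the two places where the smoothness of $\ker(f)_s$ was used and replacing each using only the weaker hypothesis $\Hom(\ker(f)_s,\mathbb G_a)=0$. By \Cref{L:R2Artin} I may assume $S=\operatorname{Spec}R$ with $(R,\mathfrak m)$ Artinian local and residue field $k=\bar k$, and I argue by induction along a small extension $0\to I\to R'\to R\to 0$: writing $f':X'\to Y'$ for the given lift of $f:X\to Y$, with $G=\ker(f)$ flat over $R$ and $G_0=\ker(f_0)$ the central fibre, I must show $G'=\ker(f')$ is flat over $R'$. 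As over an Artinian ring the only geometric fibre is $G_0$, which is now allowed to be non-smooth, the target is flatness rather than smoothness. The overall shape is unchanged: produce a flat group-scheme lift $G''$ of $G$ over $R'$ equipped with a homomorphism $G''\to X'$ extending $G\to X$, show $G''\subseteq G'$, and conclude $G''=G'$ by Nakayama.

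The first substitute concerns the existence of the flat lift $G''$. The structural input is \Cref{P:5}: the hypothesis $\Hom(G_0,\mathbb G_a)=0$ is equivalent to $\hom(G_0,\mathbb G_m)$ being reduced, which by Cartier duality says that the finite quotient $F_0=G_0/A_0$ — where $A_0=G_0^{\mathrm{ab}}$ is the reduced identity component, an abelian variety — is of multiplicative type. Thus, in exact analogy with \S\ref{S:Pf1} (where $G$ was an extension of a finite \emph{\'etale} group scheme by an abelian scheme), $G$ carries, inductively, the structure of an extension $0\to A\to G\to F\to 0$ of a finite flat group scheme $F$ of multiplicative type by an abelian scheme $A$ over $R$. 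I would then lift $A$ to an abelian scheme $A'$ (unobstructed, \Cref{R:DefAbUnObst}), lift $F$ to $F'$ (group schemes of multiplicative type being rigid \cite{sga3-1}, the lift is unique), and lift the extension class in $\operatorname{Ext}^1_R(F,A)$ to $\operatorname{Ext}^1_{R'}(F',A')$, taking $G''$ to be the resulting extension; this sustains the inductive structure and realises the vanishing of the obstruction $\operatorname{Ext}^2_k(G_0,\ell_{G_0}^\vee)\otimes_k I$ of \Cref{T:1}\ref{T:1_i} and \Cref{L:R2gg0} by an explicit construction.

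The second substitute is the Ext-bookkeeping of \S\ref{S:Pf1}, which must now be run with the full co-Lie complex $\ell_{G_0}^\vee$ in place of the Lie algebra. By \Cref{T:2}\ref{T:2_i} and \Cref{R:tan_to_obst}, the obstruction to extending $G\to X$ lies in $\operatorname{Ext}^1_k(G_0,\mathfrak x_0)\otimes I$, its image in $\operatorname{Ext}^1_k(G_0,\mathfrak y_0)\otimes I$ vanishes because $G\to Y$ is zero, and changing $G''$ moves this obstruction by the image of $\operatorname{Ext}^1_k(G_0,\ell_{G_0}^\vee)\otimes I\to\operatorname{Ext}^1_k(G_0,\mathfrak x_0)\otimes I$. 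The key point is that this map has the same image as $\operatorname{Ext}^1_k(G_0,\operatorname{Lie}(G_0))\otimes I\to\operatorname{Ext}^1_k(G_0,\mathfrak x_0)\otimes I$: the truncation triangle $\operatorname{Lie}(G_0)\to\ell_{G_0}^\vee\to H^1(\ell_{G_0}^\vee)[-1]$ shows that the cokernel of $\operatorname{Ext}^1_k(G_0,\operatorname{Lie}(G_0))\to\operatorname{Ext}^1_k(G_0,\ell_{G_0}^\vee)$ injects into $\operatorname{Ext}^0_k(G_0,H^1(\ell_{G_0}^\vee))=\Hom(G_0,\mathbb G_a)\otimes H^1(\ell_{G_0}^\vee)$, which is $0$ by hypothesis. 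Hence the exactness argument of \S\ref{S:Pf1}, coming from $0\to\operatorname{Lie}(G_0)\to\mathfrak x_0\to\mathfrak y_0$, applies and yields a choice of $G''$ with a homomorphism $u:G''\to X'$ extending $G\to X$. Comparing the composite $G''\to X'\to Y'$ with the zero map through \Cref{T:2}\ref{T:2_ii}, their difference lies in $\operatorname{Ext}^0_k(G_0,\mathfrak y_0)=\Hom(G_0,\mathbb G_a)\otimes\mathfrak y_0=0$, so $G''\subseteq G'$. Finally $u$ is a closed immersion — this no longer follows from the smoothness citation of \S\ref{S:Pf1}, but $u_0:G_0\hookrightarrow X_0$ is one and $G''$, $X'$ are flat over $R'$, which suffices — and Nakayama gives $G''=G'$, proving flatness.

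I expect the main obstacle to be the existence of the flat lift, i.e.\ the lifting of the extension class for multiplicative-type $F$, equivalently the vanishing $\operatorname{Ext}^2_k(G_0,\ell_{G_0}^\vee)=0$. In contrast to the smooth case, $\ell_{G_0}^\vee$ has nonzero $H^1$ exactly along the infinitesimal multiplicative directions, and a term-by-term truncation computation does not produce the vanishing directly: there is a connecting map into $\operatorname{Ext}^3_k(G_0,\mathbb G_a)$ that must be accounted for. This is why the argument should be routed through rigidity of multiplicative-type group schemes rather than through a degreewise vanishing of $\operatorname{Ext}$; concretely, the delicate step is to show $\underline{\operatorname{Ext}}^1(F',A')$ is formally smooth over $S'$ when $p\mid\#F$, where the prime-to-$p$ device of \S\ref{S:Pf1} is unavailable and one instead Cartier-dualises $F'$ (whose dual is \'etale) and appeals to the torus computation \Cref{P:11} through a Kummer sequence together with \Cref{P:13}.
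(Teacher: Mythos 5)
Your overall architecture --- reduce to a small extension of Artinian local rings, build a flat lift $G''$ of $G=\ker(f)$ over $R'$, adjust $G''$ so that $G\to X$ extends to a homomorphism $G''\to X'$, use $\Hom(G_0,\mathbb G_a)=0$ to force the composite $G''\to X'\to Y'$ to vanish, and conclude $G''=\ker(f')$ by Nakayama --- is a faithful generalization of the proof of \Cref{T:KerMorph} in \S\ref{S:Pf1}, and your second substitute is correct: the truncation triangle together with $\operatorname{Ext}^0(G_0,H^1(\ell_{G_0}^\vee))=\Hom(G_0,\mathbb G_a)\otimes H^1(\ell_{G_0}^\vee)=0$ does show that $\operatorname{Ext}^1(G_0,\operatorname{Lie}G_0)\to\operatorname{Ext}^1(G_0,\ell_{G_0}^\vee)$ is surjective, so the exactness argument of \S\ref{S:Pf1} goes through. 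The genuine gap is exactly where you predicted it: the existence of the flat lift $G''$. You reduce this to lifting the extension class of $G$ in $\operatorname{Ext}^1_R(F,A)$, with $F$ finite flat of multiplicative type, to $\operatorname{Ext}^1_{R'}(F',A')$, and for this you offer only a sketch (``Cartier-dualise $F'$ and appeal to \Cref{P:11} through a Kummer sequence together with \Cref{P:13}''). Those propositions compute $\operatorname{Ext}^i(-,\mathbb G_a)$ for tori and locally constant sheaves; they do not, without substantial further argument, yield the formal smoothness of $\underline{\operatorname{Ext}}^1(F',A')$ over $S'$ when $p$ divides the order of $F$, and the prime-to-$p$ device of \S\ref{S:Pf1} (\'etale-ness of $\underline{\operatorname{Ext}}^1(B',A'[N])$) is genuinely unavailable here. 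A smaller unproved assertion feeds into the same step: that $G$ itself over $R$, and not merely $G_0$ over $k$, is an extension of a finite flat multiplicative-type group scheme by an abelian scheme; your ``inductively'' can be made to work if the induction is set up to carry this structure along, but it has to be said.

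This is precisely the difficulty the paper's own proof is engineered to avoid. The paper never constructs a global flat lift of $G$: \Cref{P:3}, proved via \Cref{P:1}, produces the flatness obstruction as a homomorphism $\ker(f_0)\to\mathfrak h$ by working Zariski-locally on $G$ --- each affine open of $G$ admits a flat lift over $R'$ because $G$ is a local complete intersection over $R$, that lift maps to $X'$ because $X'$ is smooth, and the deviation of the composite to $Y'$ from zero glues to a class in $\Hom(\ker(f_0),\mathfrak h)$, which vanishes by hypothesis. (Compare \Cref{R:1} and \Cref{R:15}, which explain that the global-lift argument of \S\ref{S:Pf1} yields \Cref{P:3} only under added hypotheses such as $\operatorname{Ext}^2_k(G_0,\mathbb G_a)=0$ and smoothness of $\ker(f)$.) So your route is genuinely different from the paper's, and to complete it you would have to supply the lifting of the extension class --- equivalently, a substitute for the vanishing of the obstruction in $\operatorname{Ext}^2_k(G_0,\ell_{G_0}^\vee)\otimes I$ --- which is the one claim your proposal does not establish.
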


Our strategy for proving \Cref{T:KerMorph-Strong} is to  prove a statement about deformations of kernels of homomorphisms smooth commutative group schemes: 

\begin{pro}[Obstruction to Flatness] \label{P:3}
Let $R$ be an Artinian local ring with residue field  $k$.  
	  Let $R' \to R$ be a small extension of $R$  by a $k$-vector space $I$.  
	Suppose that $f' : X' \to Y'$ is a homomorphism of  smooth commutative algebraic group schemes over $R'$, restricting to $f : X \to Y$ over $R$ and to $f_0 : X_0 \to Y_0$ over the residue field $k$. 
	Let $\mathfrak h$ be the co-kernel of the morphism $\mathfrak x\to \mathfrak y$ of Lie algebras associated to $f_0:X_0\to Y_0$.  
 Assume that $\ker(f)$ is flat over $R$. 
 Then
 there is a natural obstruction in 
		 $\operatorname{Ext}^0_k(\ker(f_0),\mathfrak h)$,  
		 i.e., a homomorphism of fpqc sheaves of abelian groups, $\ker(f_0) \to \mathfrak h$,		 whose vanishing is equivalent to 
		 the flatness of $\ker(f')$ over $R'$.
 \end{pro}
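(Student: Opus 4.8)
The plan is to mimic the deformation-theoretic proof of \Cref{T:KerMorph} given in \S\ref{S:Pf1}, but to retain the obstruction class that was forced to vanish there by the hypothesis $\Hom(\ker(f_0),\mathbb G_a)=0$. Filtering $I$, we may assume it is principal, so the factor $\otimes_k I$ is harmless. Write $G=\ker(f)$, $G_0=\ker(f_0)$, and $G'=\ker(f')$. As in \S\ref{S:Pf1}, flatness of $G'$ will be detected by the existence of a flat lift $G''$ of $G$ over $R'$ that embeds as a closed subgroup scheme of $X'$ with $f'|_{G''}=0$: given such a $G''$, the Nakayama argument of \S\ref{S:Pf1} identifies $G''$ with $G'$ (they share the central fibre $G_0$, since kernels commute with base change), proving flatness; conversely $G'$ itself serves as $G''$ when it is flat. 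So the entire content is to measure the obstruction to producing such a $G''$.

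Granting for the moment that $G$ admits some flat lift $G''$ over $R'$, I would proceed exactly as in the proof of \Cref{T:KerMorph}. Using the split four-term sequence of Lie algebras $0\to\mathfrak g\to\mathfrak x\xrightarrow{df_0}\mathfrak y\to\mathfrak h\to 0$ together with \Cref{L:R2gg0}, \Cref{T:1}~\ref{T:1_ii}, and \Cref{R:tan_to_obst}, I arrange the lift $G''$ so that the closed embedding $G\hookrightarrow X$ extends to a closed embedding $u:G''\hookrightarrow X'$: the obstruction to extending $G\hookrightarrow X$ lies in $\operatorname{Ext}^1_k(G_0,\mathfrak x)\otimes I$, its image in $\operatorname{Ext}^1_k(G_0,\mathfrak y)\otimes I$ is the (vanishing) obstruction to extending the zero morphism $G\to Y$, hence it lies in the image of $\operatorname{Ext}^1_k(G_0,\mathfrak g)\otimes I$ and can be annihilated by moving $G''$ within its $\operatorname{Ext}^1_k(G_0,\mathfrak g)\otimes I$–torsor. (That $u$ is then a closed embedding, despite $G''$ not being smooth, follows from $G'',X'$ being flat over $R'$ and $u_0$ being a closed embedding.)

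The obstruction class is now the composite. The morphism $f'\circ u:G''\to Y'$ restricts over $R$ to $f\circ(G\hookrightarrow X)=0$, so it is a deformation of the zero morphism $G\to Y$; by \Cref{T:2}~\ref{T:2_ii} and \Cref{L:R2gg0} it differs from the zero deformation by a class $\psi\in\operatorname{Ext}^0_k(G_0,\mathfrak y)\otimes I=\Hom(G_0,\mathfrak y)\otimes I$. I define the obstruction to be the image $[\psi]$ of $\psi$ in $\operatorname{coker}\bigl(\Hom(G_0,\mathfrak x)\to\Hom(G_0,\mathfrak y)\bigr)\otimes I$, where the map is induced by $df_0$; by splitness of the Lie algebra sequence this cokernel is canonically $\Hom(G_0,\mathfrak h)\otimes I=\operatorname{Ext}^0_k(G_0,\mathfrak h)\otimes I$, i.e. a homomorphism $G_0\to\mathfrak h$. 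Replacing $u$ by another extension of $G\hookrightarrow X$ alters $\psi$ by $df_0$ composed with an element of $\Hom(G_0,\mathfrak x)\otimes I$, hence leaves $[\psi]$ unchanged; the residual freedom in $G''$ is tracked through \Cref{R:tan_to_obst}, and checking that $[\psi]$ is unaffected by it is the bookkeeping that makes $[\psi]$ a well-defined, natural class. Finally $[\psi]=0$ means precisely that, after adjusting $u$, one has $f'\circ u=0$, so $G''\subseteq G'$ and $G'$ is flat by Nakayama; conversely flatness of $G'$ exhibits the lift with $\psi=0$. This gives the stated equivalence.

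The main obstacle is the step I postponed: in the generality of the proposition $G_0=\ker(f_0)$ need not be smooth, so the existence of a flat lift $G''$ of $G$ is no longer immediate from the unobstructedness of smooth proper group schemes invoked in \S\ref{S:Pf1}, and the naive obstruction now lives in $\operatorname{Ext}^2_k(G_0,\ell_{G_0}^\vee\otimes I)$ with $\ell_{G_0}^\vee$ a genuine two-term complex. I would circumvent this by lifting the quotient rather than the kernel: since $G$ is flat, \Cref{T:quot} factors $f$ as $X\twoheadrightarrow Z\hookrightarrow Y$ with $Z=X/G$ smooth over $R$, and one lifts $Z$ to a smooth group scheme $Z'$ over $R'$ (the relevant deformation being unobstructed, e.g. via \Cref{P:2} in the proper case governing the application to abelian schemes) together with a faithfully flat homomorphism $X'\to Z'$ lifting $\pi$; then $G'':=\ker(X'\to Z')$ is flat over $R'$ as the base change of a flat morphism, and serves as the required lift. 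Verifying this lifting of the quotient, together with the independence of $[\psi]$ from all choices, are the two points demanding the extra care alluded to in \Cref{R:1}.
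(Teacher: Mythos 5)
Your overall architecture (reduce flatness of $\ker(f')$ to the existence of a flat lift $G''\subseteq X'$ of $\ker(f)$ killed by $f'$, then extract a class in $\Hom(G_0,\mathfrak h)\otimes I$ from the failure of $f'\circ u$ to vanish) is reasonable, and your endgame via Nakayama is sound. But the step you yourself flag as the main obstacle — producing the global flat lift $G''$ — is where the argument genuinely breaks, and your proposed circumvention does not close it. First, lifting $Z=X/\ker(f)$ to a smooth $Z'$ over $R'$ needs $\operatorname{Ext}^2_k(Z_0,\mathfrak z_0)=0$, which \Cref{P:2} supplies only when $Z_0$ is \emph{proper}; \Cref{P:3} is stated for arbitrary smooth commutative group schemes, so this already restricts your proof to a special case. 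Second, and more seriously, even granting $Z'$, you assert a lift of the faithfully flat homomorphism $\pi:X\to Z$ to $X'\to Z'$ without any obstruction analysis. By \Cref{T:2} that obstruction lives in $\operatorname{Ext}^1_k(X_0,\mathfrak z_0)\otimes I$, and to kill it by re-choosing $Z'$ (via \Cref{R:tan_to_obst}) you would need it to lie in the image of $\pi_0^\ast:\operatorname{Ext}^1_k(Z_0,\mathfrak z_0)\to\operatorname{Ext}^1_k(X_0,\mathfrak z_0)$; the natural way to verify this — restrict along $G_0\hookrightarrow X_0$ and identify the restriction with the obstruction to lifting the zero map $G\to Z$ — presupposes a chosen lift of $G$, which is exactly what you are trying to build. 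The argument is circular at this point. A further, related defect: since $G_0$ is not assumed smooth, its lifts form a torsor under $\operatorname{Ext}^1_k(G_0,\ell_{G_0}^\vee\otimes I)$ with $\ell_{G_0}^\vee$ a genuine two-term complex, not under $\operatorname{Ext}^1_k(G_0,\mathfrak g)\otimes I$ as in your second paragraph, so the exact sequence you import from \S\ref{S:Pf1} is not available as written; and \Cref{R:tan_to_obst} controls how the degree-one obstruction moves under a change of lift, not how the degree-zero difference class $\psi$ moves, so the well-definedness of $[\psi]$ is not actually established.

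The paper avoids all of this by never constructing a global lift of $\ker(f)$ at all. Its proof (\Cref{P:1}) works Zariski-locally on $G=\ker(f)$: because $G$ is flat, hence a local complete intersection over $S$, every small open $U\subseteq G$ admits a flat infinitesimal thickening $U'$ over $S'$ (no $\operatorname{Ext}^2$ vanishing needed); smoothness of $X'$ extends $U\to X$ to $U'\to X'$; and the composite $U'\to Y'$ differs from the origin by a derivation valued in $\ell_{Y_0}^\vee\otimes J$, whose image in $\mathfrak h\otimes J$ is shown, via the exact sequences~\eqref{eqn:8} and~\eqref{eqn:7} and the cone computation of \Cref{L:1}, to be independent of both local choices, to glue, and to be a homomorphism. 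If you want to salvage your global approach, you would at minimum have to restrict to the proper case and supply the missing obstruction argument for lifting $\pi$; otherwise the local route is the one that actually works in the stated generality.
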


In fact, our proof comes down to two statements, parallel to the  obstruction and tangent statements Illusie proves: 
  	\begin{enumerate}[label=(\alph*)]
		\item   There is a natural obstruction in 
		 $$\operatorname{Ext}^0_k(\ker(f_0),\mathfrak h)$$
				 whose vanishing is equivalent to 
		  		 the existence of a flat extension of $\ker (f)$ over $R'$ contained in $\ker (f')$.

 		\item   If this obstruction vanishes, there is exactly one such flat extension, and it is equal to $\ker (f')$.
  	\end{enumerate}
 
\begin{rem} 
	With more care, the hypotheses on $X$ and $Y$ can probably be weakened.  For example, it may be sufficient to assume that $X \to Y$ is a local complete intersection morphism, with no smoothness assumptions.
\end{rem}

\begin{rem}\label{R:15}
As motivation for \Cref{P:3}, by taking a careful look at the proof of \Cref{T:KerMorph} (\S \ref{S:Pf1}),
 in the situation of the small extension of Artinian local rings, then under the assumption 
 $\operatorname{Ext}^2_k(G_0, \mathbb G_a)=0$, e.g., if  $G_0$ is a smooth proper group scheme over a field (\Cref{P:13}), 
  one can see that even without the assumption that $\operatorname{Hom}(G_0,\mathbb G_a)=0$, that the class $\alpha \in \operatorname{Ext}^0_k(G_0,\mathfrak y)$ given by the difference of the lifts $G''\to X'\to Y'$ and $0:G''\to Y'$ is an obstruction to the former composition being equal to the zero morphism. Allowing ourselves to modify the morphism $G''\to X'$ by elements of $\operatorname{Ext}^0_k(G_0,\mathfrak x)$, we see that it is the image of $\alpha$ in $$\operatorname{Ext}^0_k(G_0,\mathfrak h)$$ under the natural map $\operatorname{Ext}^0_k(G_0,\mathfrak y)\to \operatorname{Ext}^0_k(G_0,\mathfrak h)$ whose vanishing is \emph{equivalent} to the flatness of the kernel $G'$.  In fact, one can see that this observation, together with the proof of \Cref{T:KerMorph}, gives a proof of \Cref{P:3} so long as one assumes that $\operatorname{Ext}^2_k(G_0,\mathbb G_a)=0$.  
   Note that to conclude in the proof that $G'' \to X'$ is a closed immersion, we use that $G_0$ is a local complete intersection over $k$ \cite[III, \S3, no.~6, Thm.~6.1, p.~346]{DemazureGabriel} (see also [SGA 3 III (new version), Prop.~4.15]: every flat locally finite presentation sub-group scheme of a smooth group scheme is a local complete intersection in the ambient group scheme).
   In summary, this gives a proof of \Cref{P:3} under added assumption that $X'$ is proper and $\ker (f)$ is smooth. 
\end{rem}

Before  proving \Cref{P:3}, we explain how  \Cref{P:3} implies \Cref{T:KerMorph-Strong}:

\begin{proof}[Proof of \Cref{T:KerMorph-Strong}] 
	By \Cref{L:R2Artin}, we may assume that $S$ is Artinian.  By induction on the length of $S$, it will be sufficient to assume the result is already known for $S$ and deduce it for a small extension $S'$ (\ref{SSS:small}).  Let $f' : X' \to Y'$ be a morphism of abelian schemes over $S'$ and let $f : X \to Y$ be its restriction to $S$.  Let $\ker(f_0)$ be the restriction of $\ker(f)$ to the residue field of $S$.  We are in the situation of \Cref{P:1}, so there is a homomorphism $\ker(f_0) \to \mathfrak h$ obstructing the flatness of $f'$ (using the notation of \Cref{P:1}).  But, by assumption, we have $\operatorname{Hom}(\ker(f_0),\mathfrak h)=0$,  so the obstruction $\ker(f_0) \to \mathfrak h$ vanishes,  and $\ker(f')$ is flat over $S'$.
\end{proof}

\subsection{Proof of \Cref{P:3}} \label{S:6}

\begin{lem} \label{L:4}
	Let $f : X \to Y$ be a morphism of schemes and let $Z$ be a locally closed subscheme of $Y$.  Then the morphism on cotangent complexes
	\begin{equation} \label{E:1}
		\xymatrix{L_{X/Y} \big|_{f^{-1} Z} \ar[r]& L_{f^{-1} Z/Z}}
	\end{equation}
	is an isomorphism on homology in degree~$0$ and surjective on homology in degree~$-1$.
\end{lem}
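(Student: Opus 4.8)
The plan is to work with the Cartesian square defining $W := f^{-1}Z = X\times_Y Z$, whose projections I denote $h\colon W\to X$ and $g\colon W\to Z$. Here $g$ is the base change of $f$ and $h$ is the base change of the locally closed immersion $i\colon Z\to Y$, hence $h$ is itself a locally closed immersion, and the morphism \eqref{E:1} is precisely the canonical base-change morphism $\phi\colon \mathrm{L}h^\ast L_{X/Y}\to L_{W/Z}$. First I would reduce to the case where $i$ is a \emph{closed} immersion: a locally closed immersion factors as $Z\to U\to Y$ with $Z\to U$ closed and $U\to Y$ open, and since open immersions are \'etale their cotangent complexes vanish; transitivity then gives $L_{X/Y}\big|_W = L_{X_U/U}\big|_W$ (where $X_U=f^{-1}U$), so the comparison is unchanged and we may assume $Z\hookrightarrow Y$ is cut out by an ideal sheaf $\mathcal I$. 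In this situation both $L_{Z/Y}$ and $L_{W/X}$ have cohomology concentrated in degrees $\leq -1$, with $\mathcal H^{-1}$ equal to the respective conormal sheaves $N_{Z/Y}=\mathcal I/\mathcal I^2$ and $N_{W/X}$.

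Next I would write down the two transitivity triangles for the two factorizations of $W\to Y$. For $W\xrightarrow{g} Z\xrightarrow{i} Y$ one has $\mathrm{L}g^\ast L_{Z/Y}\to L_{W/Y}\to L_{W/Z}\to[1]$, and for $W\xrightarrow{h} X\xrightarrow{f} Y$ one has $\mathrm{L}h^\ast L_{X/Y}\to L_{W/Y}\to L_{W/X}\to[1]$. By compatibility of these canonical maps, $\phi$ factors as $\mathrm{L}h^\ast L_{X/Y}\to L_{W/Y}\to L_{W/Z}$, i.e.\ $\phi=\psi\circ\theta$ where $\theta$ comes from the second triangle and $\psi$ from the first. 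Applying the octahedral axiom to this composite, the cofiber of $\phi$ sits in a distinguished triangle
\[ L_{W/X}\;\longrightarrow\;\operatorname{cofib}(\phi)\;\longrightarrow\;(\mathrm{L}g^\ast L_{Z/Y})[1]\;\longrightarrow\;[1], \]
since $\operatorname{cofib}(\theta)=L_{W/X}$ and $\operatorname{cofib}(\psi)=(\mathrm{L}g^\ast L_{Z/Y})[1]$ by the two triangles above.

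I would then read off the cohomology. The term $L_{W/X}$ lives in degrees $\leq -1$ and $(\mathrm{L}g^\ast L_{Z/Y})[1]$ lives in degrees $\leq -2$ (as $\mathrm{L}g^\ast$ is right $t$-exact and $L_{Z/Y}$ is in degrees $\leq -1$), so $\operatorname{cofib}(\phi)$ lies in degrees $\leq -1$ and its top cohomology is
\[ \mathcal H^{-1}(\operatorname{cofib}(\phi))=\coker\bigl(g^\ast N_{Z/Y}\longrightarrow N_{W/X}\bigr), \]
the map being the natural conormal base-change map. Because the ideal of $W$ in $X$ is generated by the extension of $\mathcal I$, this map is \emph{surjective}, so $\mathcal H^{-1}(\operatorname{cofib}(\phi))=0$ and in fact $\operatorname{cofib}(\phi)$ is concentrated in degrees $\leq -2$. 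The long exact sequence of $\mathrm{L}h^\ast L_{X/Y}\xrightarrow{\phi} L_{W/Z}\to\operatorname{cofib}(\phi)$ then immediately yields that $\phi$ is an isomorphism on $\mathcal H^{0}$ and surjective on $\mathcal H^{-1}$, which is the assertion of the lemma. (As a sanity check, the $\mathcal H^{0}$ statement also follows directly from the fact that K\"ahler differentials commute with base change, $\Omega_{W/Z}\cong h^\ast\Omega_{X/Y}$.)

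The step I expect to be the main obstacle is pinning down the connecting morphism $\mathrm{L}g^\ast L_{Z/Y}\to L_{W/X}$ produced by the octahedron and confirming that on $\mathcal H^{-1}$ it is the canonical surjection $g^\ast N_{Z/Y}\twoheadrightarrow N_{W/X}$; this is a naturality check rather than a computation, and it can be made concrete on an affine chart where $Z$ is defined by an ideal $I\subseteq\mathcal O(U)$ and $W$ by its extension, making the surjectivity manifest. Everything else in the argument is formal manipulation of transitivity triangles and standard facts about cotangent complexes of immersions already recalled in \S\ref{S:7}.
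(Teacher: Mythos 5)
Your proposal is correct, but it takes a genuinely different route from the paper's proof. You argue ``upstairs'' in the derived category: after reducing to the case of a closed immersion, you play the two transitivity triangles for $W\to X\to Y$ and $W\to Z\to Y$ against each other via the octahedral axiom, exhibit the cone of \eqref{E:1} as an extension of $(\mathrm{L}g^\ast L_{Z/Y})[1]$ by $L_{W/X}$, and conclude from the surjectivity of the conormal base-change map $g^\ast(\mathcal I/\mathcal I^2)\to \mathcal J/\mathcal J^2$, the ideal $\mathcal J$ of $W$ in $X$ being generated by the extension of $\mathcal I$. The paper instead works ``downstairs'' from the universal property of the cotangent complex: it tests the cone against injective quasicoherent modules $J$ on affine opens, identifies $\bext(L_{f^{-1}Z/Z}|_U,J)$ and $\bext(L_{X/Y}|_U,J)$ with groupoids of square-zero algebra extensions, and verifies full faithfulness by a direct diagram chase (a splitting of the induced $\mathcal O_Y$-algebra extension of $\mathcal O_X|_U$ kills $I\mathcal O_X$ and so factors uniquely through $\mathcal O_{f^{-1}Z}|_U$). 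The trade-off is exactly the point you flag yourself: your argument leans on the compatibility of the octahedron's connecting morphism with the canonical transitivity maps, so that the map appearing on $\mathcal H^{-1}$ really is the conormal base-change map --- a true but nontrivial naturality statement in Illusie's formalism --- whereas the paper's argument avoids all triangulated bookkeeping. In exchange, your approach yields more: it pins down the entire cone up to an explicit triangle (showing it is concentrated in degrees $\leq -2$) and makes visible that the only obstruction in degree $-1$ is the cokernel of $g^\ast N_{Z/Y}\to N_{W/X}$, which is more structural information than the lemma asserts.
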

\begin{proof}
	Let $K$ be the cone of~\eqref{E:1}.  The assertion is equivalent to the vanishing of $H^0(K)$ and $H^{-1}(K)$, which is equivalent to the vanishing of $\ext^0(K \big|_U,J)$ and $\ext^1(K \big|_U,J)$ for all affine schemes $U$, morphisms $U \to f^{-1} Z$, and injective quasicoherent $\mathcal O_U$-modules $J$.  This is equivalent to saying that the map
	\begin{equation*}
\xymatrix{		\ext^i(L_{f^{-1} Z/Z} \big|_U, J) \ar[r]& \ext^i(L_{X/Y} \big|_U, J) }
	\end{equation*}
	is an isomorphism for $i = 0$ and injective for $i = 1$.  If we write $\mathbf{Ext}(-,-)$ for the \emph{groupoid} of extensions, these two conditions are equivalen to the full faithfulness of
	\begin{equation*}
		\xymatrix{\mathbf{Ext}(L_{f^{-1} Z/Z} \big|_U, J) \ar[r]& \mathbf{Ext}(L_{X/Y} \big|_U, J).}
	\end{equation*}
	We may identify $\mathbf{Ext}(L_{f^{-1} Z/Z} \big|_U, J)$ with the groupoid of $\mathcal O_Z$-algebra extensions of $\mathcal O_{f^{-1} Z} \big|_U$ by $J$.  Likewise, $\mathbf{Ext}(L_{X/Y} \big|_U, J)$ is the groupoid of $\mathcal O_Y$-algebra extensions of $\mathcal O_X \big|_U$ by $J$.  We must therefore show that if $A$ is an $\mathcal O_Z$-algebra extension of $\mathcal O_{f^{-1} Z} \big|_U$ by $J$ and $B$ is the induced $\mathcal O_Y$-algebra extension of $\mathcal O_X \big|_U$ by $J$, then every splitting of $B$ is induced from a unique splitting of $A$.  Visually, we have a commutative diagram of solid arrows with exact rows:
	\begin{equation} \label{E:2} \vcenter{ \xymatrix{
			0 \ar[r] & J \ar[r] \ar@{=}[d] & B \ar[r] \ar[d] & \mathcal O_X \big|_U \ar[r] \ar[d] \ar@{-->}[dl]_{\varphi} & 0 \\
			0 \ar[r] & J \ar[r] & A \ar[r] & \mathcal O_{f^{-1} Z} \big|_U \ar[r] & 0
		}}
	\end{equation}
	We must show that every splitting of the top row is induced from a splitting of the bottom row.  A splitting of the top row is a $\mathcal O_Y$-algebra homomorphism $\mathcal O_X \big|_U \to B$ that splits the projection.  This induces a map $\varphi : \mathcal O_X \big|_U \to A$ as illustrated with the dashed arrow in \Cref{E:2}.  But $A$ is an $\mathcal O_Z$-algebra extension of $\mathcal O_{f^{-1} Z}$, so, if $I$ is the ideal of $\mathcal O_Z$ in $\mathcal O_Y$, then $\varphi(I \mathcal O_X \big|_U) = 0$.  That is, $\varphi$ factors uniquely through $\mathcal O_X \big|_U / I \mathcal O_X \big|_U = \mathcal O_{f^{-1} Z} \big|_U$, as required.
\end{proof}

\begin{cor} \label{L:1}
	Let $X$ and $Y$ be flat, commutative group schemes over a scheme $S$.  Let $f : X \to Y$ be a homomorphism over $S$.  Let $G$ be the kernel of $f$.  Then the cone of
\begin{equation} \label{eqn:1}
	\ell_{X/Y} \longrightarrow \ell_G
\end{equation}
	is concentrated in degrees $[-3,-2]$.  If $X \to Y$ is a local complete intersection morphism (for example if $Y$ is smooth) then the cone is concentrated in degree~$-2$.
\end{cor}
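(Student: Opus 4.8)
The plan is to deduce \Cref{L:1} from \Cref{L:4} by an amplitude computation, the one genuine subtlety being that $\ell_{X/Y}$ and $\ell_G$ live on $S$ whereas \Cref{L:4} produces information on $G$.

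First I would recognize the kernel as a fibre product. Writing $e_Y\colon S\to Y$ for the identity section and $Z=e_Y(S)\subseteq Y$ for its image (a locally closed subscheme, since the identity section of any group scheme is an immersion, and isomorphic to $S$ via $e_Y$), we have $G=\ker(f)=f^{-1}(Z)=X\times_Y S$. Let $\iota\colon G\hookrightarrow X$ be the inclusion and $\epsilon\colon S\to G$ the zero section, so that $\iota\circ\epsilon$ is the zero section of $X$ and $L_{G/Z}=L_{G/S}$. The map \eqref{eqn:1} is obtained by applying $\mathrm L\epsilon^\ast$ to the base-change morphism
$$
\mathrm L\iota^\ast L_{X/Y}\longrightarrow L_{G/S},
$$
using $\mathrm L\epsilon^\ast\mathrm L\iota^\ast L_{X/Y}=\mathrm L(\iota\epsilon)^\ast L_{X/Y}=\ell_{X/Y}$ and $\mathrm L\epsilon^\ast L_{G/S}=\ell_G$. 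Writing $C$ for the cone of this base-change morphism on $G$, the cone of \eqref{eqn:1} is then $\mathrm L\epsilon^\ast C$, as $\mathrm L\epsilon^\ast$ is triangulated.

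Next I would feed the base-change morphism into \Cref{L:4}. Under the identifications above it is exactly the morphism \eqref{E:1} for this $Z$, so \Cref{L:4} says it is an isomorphism on $H^0$ and surjective on $H^{-1}$. Passing to the long exact sequence of the defining triangle of $C$, this is equivalent to $H^0(C)=H^{-1}(C)=0$, i.e. $C\in D^{\le -2}$. Since $C$ lies in $D^{\le -2}$ and derived tensoring with a module is right $t$-exact, $C\otimes^{\mathrm L}M$ lies in $D^{\le -2}$ for every $\mathcal O_G$-module $M$; hence the Tor-amplitude of $C$ is bounded above by $-2$.

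For the lower bound I would use the transitivity triangle for $X\xrightarrow{f}Y\to S$, namely $\mathrm Lf^\ast L_{Y/S}\to L_{X/S}\to L_{X/Y}\xrightarrow{+1}$. Because $X$ and $Y$ are flat group schemes, hence local complete intersections over $S$, both $L_{X/S}$ and $L_{Y/S}$ are perfect of amplitude $[-1,0]$, so the triangle shows $L_{X/Y}$ has Tor-amplitude $[-2,0]$ in general, and $[-1,0]$ when $f$ is a local complete intersection morphism (for instance when $Y/S$ is smooth). As Tor-amplitude is preserved by $\mathrm L\iota^\ast$, the term $\mathrm L\iota^\ast L_{X/Y}$ has Tor-amplitude $[-2,0]$ (resp. $[-1,0]$); combining this with $L_{G/S}$ of Tor-amplitude $[-1,0]$ in the triangle $\mathrm L\iota^\ast L_{X/Y}\to L_{G/S}\to C$ bounds the Tor-amplitude of $C$ below by $-3$ (resp. $-2$). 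Together with the upper bound $-2$ from the previous step, $C$ has Tor-amplitude $[-3,-2]$, and $[-2,-2]$ in the local complete intersection case. Finally, since $\mathrm L\epsilon^\ast$ preserves Tor-amplitude, the cone $\mathrm L\epsilon^\ast C$ of \eqref{eqn:1} is concentrated in degrees $[-3,-2]$ (resp. in degree $-2$), as claimed.

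The step I expect to be the main obstacle is this last transfer from $G$ back to $S$: a priori the derived pullback along the (possibly non-flat) zero section $\epsilon$ could spread cohomology into arbitrarily negative degrees, so the naive statement that $C$ has cohomology in $[-3,-2]$ is not by itself enough. The key is to upgrade cohomological concentration to a genuine Tor-amplitude bound on $C$ — which is exactly what \Cref{L:4} (upper bound) and the transitivity triangle (lower bound) provide — after which $\mathrm L\epsilon^\ast$ is harmless.
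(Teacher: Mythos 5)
Your proposal follows the same route as the paper's proof: both reduce to the cone $K$ of the base-change morphism $\mathrm L\iota^\ast L_{X/Y}\to L_{G/S}$ on $G$, use \Cref{L:4} to place that cone in degrees $\leq -2$, use the fact that flat group schemes are local complete intersections to bound $L_{X/Y}$ in amplitude $[-2,0]$ (resp.\ $[-1,0]$ in the lci case), and then restrict along the zero section. Your treatment of the last step is actually more careful than the paper's: you correctly observe that the zero section of $G$ need not be flat, so a cohomological bound on the cone does not automatically survive $\mathrm L\epsilon^\ast$, and you upgrade everything to Tor-amplitude bounds, which do. That is a genuine improvement in rigor over the paper's one-line ``restricting to the origin of $G$.''

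There is, however, a gap in your lower bound: you assert that $L_{G/S}$ has Tor-amplitude $[-1,0]$, implicitly applying to $G$ the same lci statement you invoke for $X$ and $Y$. That statement (\cite[Exp.~VIIB, Cor.~5.5.1]{sga3-1}) applies to \emph{flat} group schemes locally of finite presentation, and $G=\ker(f)$ is not assumed flat over $S$ --- indeed, detecting the flatness of $G$ is the whole point of the surrounding sections, so it cannot be assumed here. Without flatness, $G$ need not be lci over $S$, and $L_{G/S}$ need not have Tor-amplitude in $[-1,0]$, so your lower bound on the Tor-amplitude of $C$ is not established. To be fair, the paper's own proof is equally silent on this point: its claim that $K$ lies in $[-3,0]$ amounts to $H^i(L_{G/S})=0$ for $i\leq -4$, which is asserted without justification, so this is a shared weakness rather than an error you introduced. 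Moreover, in every place the corollary is actually used (\Cref{P:1} and the sentence following the corollary) the base is the spectrum of a field, where $G$ is automatically flat, hence lci, and your argument closes completely. But as a proof of the statement over an arbitrary base, the step ``$L_{G/S}$ has Tor-amplitude $[-1,0]$'' needs either a justification or an explicit flatness hypothesis on $G$.
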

\begin{proof}

	Let $K$ be the cone of $L_{X/Y} \big|_G \to L_{G/S}$.  By \Cref{L:4}, $K$ is concentrated in degrees $\leq -2$.  But both $X$ and $Y$ are local complete intersections over $S$ (by \cite[Exp.~VIIB, Cor.~5.5.1]{sga3-1} or \cite[III, \S3, no.~6, Th\'eor\`eme~6.1, p.~346]{DemazureGabriel}), so $L_{X/S}$ and $L_{Y/S}$ are both concentrated in $[-1,0]$.  The exactness of the triangle
	\begin{equation*}
		\xymatrix{L_{X/S} \ar[r]& L_{Y/S} \ar[r]& L_{X/Y}}
	\end{equation*}
	implies that $L_{X/Y}$ is concentrated in $[-2,0]$, so $K$ is concentrated in $[-3,0]$.  Combining these observations, we conclude that $K$ is concentrated in $[-3,0] \cap (-\infty,-2] = [-3,-2]$.  Restricting to the origin of $G$, we deduce that the cone of~\eqref{eqn:1} is also concentrated in degree $[-3,-2]$.

	If $X \to Y$ is a local complete intersection morphism then $L_{X/Y}$ is concentrated in degrees $[-1,0]$, and therefore $K$ is concentrated in degree~$-2$, as is its restriction to the origin of $G$.
\end{proof}

\Cref{L:1}  shows that, if $Y$ is smooth over $k$, then the cone of $\ell_G^\vee \to \ell_{X/Y}^\vee$ is quasi-isomorphic to a vector space in degree~$1$.

\begin{pro} \label{P:1}
	Let $S_0 \subseteq S \subseteq S'$ be infinitesimal extensions of Artinian local rings, with $S_0 = \operatorname{Spec} k$ and with the ideal $J$ of $S$ in $S'$ isomorphic to $k$.  Suppose that $f' : X' \to Y'$ is a homomorphism of commutative group schemes over $S'$, restricting to $f : X \to Y$ over $S$ and to $f_0 : X_0 \to Y_0$ over $S_0$.  Assume that $X'$ is smooth over $S'$.  Let $G'$ be the kernel of $f'$ and let $G$ and $G_0$ be its restrictions to $S$ and to $S_0$.  Assume that $G$ is flat over $S$.  Let $\mathfrak h$ be a $k$-vector space such that $\mathfrak h[-1]$ is quasi-isomorphic to the cone of $\ell_{G_0/S_0}^\vee \to \ell_{X_0/Y_0}^\vee$.  Then there is a homomorphism of fpqc sheaves of abelian groups $G_0 \to \mathfrak h$, depending naturally on the above data, whose vanishing is equivalent to the flatness of $G'$ over $S'$.
\end{pro}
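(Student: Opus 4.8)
The plan is to reduce the flatness of $G'$ to a single lifting problem and then read off the asserted obstruction by pushing Illusie's deformation calculus through the cone computation of \Cref{L:1}.

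\emph{Reduction to a lifting problem.} First I would show that $G'$ is flat over $S'$ if and only if $G$ admits a flat $S'$-subgroup scheme lift $\tilde G \subseteq G'$ restricting to $G$ over $S$; this is exactly the Nakayama argument used at the end of the proof of \Cref{T:KerMorph}. Any such $\tilde G$ has special fibre $\tilde G_0 = G_0 = \ker(f_0) = G'_0$, so flatness of $\mathcal O_{\tilde G}$ forces the ideal $\mathcal J$ of $\tilde G$ in $G'$ to satisfy $\mathcal J \otimes_{\mathcal O_{S'}} k = 0$, whence $\mathcal J = 0$ and $\tilde G = G'$ is flat; conversely $\tilde G = G'$ serves when $G'$ is already flat. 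Thus it suffices to produce a class $\rho \in \operatorname{Hom}(G_0,\mathfrak h)$, natural in the data, whose vanishing is equivalent to the existence of $\tilde G$ (this is the split into the existence statement (a) and the uniqueness statement (b) recorded after \Cref{P:3}).

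\emph{The Illusie difference class, and why it is not enough.} As in \Cref{R:15}, if a flat group-scheme lift $G''$ of $G$ together with a lift $G'' \to X'$ of $\iota\colon G \hookrightarrow X$ could be chosen, then the difference between the composite $G'' \to X' \to Y'$ and the zero homomorphism would be a class $\alpha \in \operatorname{Ext}^0_k(G_0,\mathfrak y)$ (by \Cref{T:2} and \Cref{L:R2gg0}); altering the lift $G'' \to X'$ by $\operatorname{Ext}^0_k(G_0,\mathfrak x)$ moves $\alpha$ through the differential $\mathfrak x \to \mathfrak y$ (\Cref{R:tan_to_obst}), and, since $\operatorname{Ext}^0_k(G_0,-) = \operatorname{Hom}(G_0,\mathbb G_a)\otimes_k(-)$ is exact, the well-defined residue of $\alpha$ lands in $\operatorname{Hom}(G_0,\mathbb G_a)\otimes_k \operatorname{coker}(\mathfrak x\to\mathfrak y)$. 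This reproduces the statement, but only when $G_0$ is smooth: in general the flat group lift $G''$ need not exist, its obstruction lying in the possibly nonzero group $\operatorname{Ext}^2_k(G_0,\ell_{G_0}^\vee)$, and moreover $\operatorname{coker}(\mathfrak x\to\mathfrak y)$ agrees with the cone-theoretic $\mathfrak h$ only in the smooth case.

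\emph{The general construction and the main obstacle.} To remove the smoothness hypothesis I would not construct $G''$ at all, but instead run the obstruction theory for deforming $\iota\colon G \hookrightarrow X$ \emph{relative to} $Y$, so that the governing Lie complex is $\ell^\vee_{X/Y}$ rather than $\ell_G^\vee$. By \Cref{L:1}, with $Y_0$ smooth so that $X_0 \to Y_0$ is a local complete intersection morphism, the cone of $\ell_{G_0}^\vee \to \ell_{X_0/Y_0}^\vee$ is quasi-isomorphic to $\mathfrak h[-1]$, giving a distinguished triangle $\ell_{G_0}^\vee \to \ell_{X_0/Y_0}^\vee \to \mathfrak h[-1] \to \ell_{G_0}^\vee[1]$. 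Applying $\operatorname{Ext}^\bullet_k(G_0,-\otimes_k J)$ and using $J \cong k$ together with $\operatorname{Ext}^p_k(G_0,\mathfrak h[-1]) = \operatorname{Ext}^{p-1}_k(G_0,\mathfrak h)$ yields a long exact sequence containing the connecting map $\partial\colon \operatorname{Ext}^0_k(G_0,\mathfrak h) \to \operatorname{Ext}^2_k(G_0,\ell_{G_0}^\vee)$, and the constrained deformation problem of the first step produces a class $\rho \in \operatorname{Ext}^0_k(G_0,\mathfrak h) = \operatorname{Hom}(G_0,\mathfrak h)$ whose image under $\partial$ is precisely the obstruction to the existence of a flat group lift of $G$. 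The essential feature is that this repackaging absorbs the potentially nonzero group $\operatorname{Ext}^2_k(G_0,\ell_{G_0}^\vee)$ into the degree shift in the triangle, so that the entire obstruction to producing $\tilde G \subseteq \ker(f')$ is carried by the single degree-zero class $\rho$. Verifying this identification carefully — that the class built from $f'$ relative to $Y'$ really is an obstruction in $\operatorname{Ext}^0_k(G_0,\mathfrak h)$, that it is functorial in the data, and that its vanishing is equivalent to (not merely necessary for) the existence of $\tilde G$ — is the technical heart of the argument; the equivalence with flatness of $G'$ then follows from the Nakayama reduction of the first step.
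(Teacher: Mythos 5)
Your skeleton is the right one: the reduction of flatness of $G'$ to the existence of a flat subgroup lift of $G$ sitting inside $G'$, the diagnosis of why the argument of \Cref{T:KerMorph} breaks down without smoothness of $G_0$ (the group-lifting obstruction in $\operatorname{Ext}^2_k(G_0,\ell_{G_0}^\vee)$ need not vanish), and the identification of the target $\operatorname{Hom}(G_0,\mathfrak h)=\operatorname{Ext}^1_k(G_0,\mathfrak h[-1])$ via the triangle $\ell_{G_0}^\vee \to \ell_{X_0/Y_0}^\vee \to \mathfrak h[-1]$ all match the intended structure of the proof. But the construction of the class is exactly what you defer as ``the technical heart,'' and the characterization you offer for it --- a class $\rho$ whose image under the connecting map $\partial:\operatorname{Hom}(G_0,\mathfrak h)\to\operatorname{Ext}^2_k(G_0,\ell_{G_0}^\vee)$ equals the obstruction to a flat group lift of $G$ --- neither defines $\rho$ (it pins it down only modulo $\ker\partial$, the image of $\operatorname{Ext}^1_k(G_0,\ell_{X_0/Y_0}^\vee)$) nor is it the property you need: the statement to be proved is that $\rho$ vanishes if and only if a flat lift lands \emph{inside} $G'$, which depends on $f'$ and not merely on the abstract liftability of $G$. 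Moreover, there is no off-the-shelf ``obstruction theory for deforming $G\hookrightarrow X$ relative to $Y$'' among the quoted results: \Cref{T:2} presupposes flat lifts of both source and target already chosen, which is precisely what is unavailable here. Setting up that relative theory \emph{is} the proof, and the proposal does not do it.

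The paper's construction is local and explicit. For $U\subseteq G$ open one chooses a flat $\mathcal O_{S'}$-algebra lift $\mathcal O_{U'}$ of $\mathcal O_G|_U$ (possible Zariski-locally because $G$ is a local complete intersection over $S$), extends $U\to X$ to $g':U'\to X'$ using the smoothness of $X'$, and observes that $f'g'$ differs from the zero section of $Y'$ by a derivation $\delta\in H^0(U_0,\ell_{Y_0}^\vee\otimes J)$; the class is the image of $\delta$ in $\mathfrak h\otimes J$. The exact sequences coming from your triangle enter only to show independence of the two choices (varying $g'$ moves $\delta$ by the image of $\ell_{X_0}^\vee$, varying $U'$ by the image of $\ell_{G_0}^\vee$, and the cone being a vector space in a single degree is what makes the residue in $\mathfrak h$ well defined). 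Two further verifications are entirely absent from your plan: that the resulting section of $\mathfrak h\otimes J$ over $G_0$ is a group homomorphism (the paper compares the two group structures on $\mathfrak y\otimes J$ via an Eckmann--Hilton argument), and that vanishing lets one correct the local choices so that $g'$ factors through $G'$, after which flatness of $G'$ follows from a Tor spectral sequence --- note that your global Nakayama step presumes a globally defined flat closed subscheme of $G'$, whereas the construction only produces local flat pieces inside $G'$. Until the class is actually constructed and these points are checked, the argument is a plan rather than a proof.
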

\begin{proof}
	First, we construct the map $G \to \mathfrak h$.  Up to gluing, this is a local problem in the Zariski topology of $G$.  Suppose that $p : U \to G$ is a morphism of schemes.\footnote{For now, we only need to consider the case where $U$ is an open subset of $G$, in which case $p^{-1} \mathcal O_G = \mathcal O_U$.  However, the construction works for an arbitrary morphism $U \to G$.}
	Since $G$ is a local complete intersection over $S$ (by \cite[Exp.~VIIB, Cor.~5.5.1]{sga3-1} or \cite[III, \S3, no.~6, Th\'eor\`eme~6.1, p.~346]{DemazureGabriel}), we can, at least after replacing $U$ by an open cover, find a sheaf $\mathcal O_{U'}$ of $\mathcal O_{S'}$-algebras on $U$ fitting into a commutative diagram:\footnote{We have suppressed notation for the sheaf pullbacks of $J$, $\mathcal O_{S'}$, and $\mathcal O_S$ to $U$.}
	\begin{equation*} \xymatrix{
			0 \ar[r] & J \ar[d] \ar[r] & \mathcal O_{S'} \ar[r] \ar[d] & \mathcal O_S \ar[r] \ar[d] & 0 \\
			0 \ar[r] & J \otimes \mathcal O_U \ar[r] & \mathcal O_{U'} \ar[r] & p^{-1} \mathcal O_G \ar[r] & 0
		}
	\end{equation*}
	Let $U'$ be the locally\footnote{It will not matter for us that $U'$ is locally ringed as opposed to merely ringed.  To see it is locally ringed, note that since the kernel of $\mathcal O_{U'} \to p^{-1} \mathcal O_G$ is nilpotent, the local rings of $\mathcal O_{U'}$ and of $p^{-1} \mathcal O_G$ are the same, and $p^{-1} \mathcal O_G$ is a sheaf of local rings because it is the pullback of the structure sheaf of a scheme.}
	ringed space $(U, \mathcal O_{U'})$.  The isomorphism classes of choices of $\mathcal O_{U'}$ form a torsor on $U$ under the sheaf $\ext^1( L_{G/S}, J \otimes \mathcal O_U)$.

	Since $X'$ is smooth over $S'$, we can, again after replacing $U$ by an open cover if necessary, find an $S'$-morphism $g' : U' \to X'$ extending $U \to X$.  The composition $U' \to X' \to Y'$ corresponds dually to an algebra homomorphism:
	\begin{equation} \label{eqn:4}
\xymatrix{		{g'}^\ast {f'}^\ast : e^{-1} \mathcal O_{Y'} \ar[r]& \mathcal O_{U'}}
	\end{equation}
	The symbol $e$ denotes the composition $U' \to S'$ with the inclusion of the origin $S' \to Y'$, which coincides with $f' g'$, \emph{topologically}.  Since ${g'}^\ast {f'}^\ast$ coincides with $e^\ast$ modulo $J$, the difference ${g'}^\ast {f'}^\ast - e^\ast$ factors through a derivation:
	\begin{equation*}
		\xymatrix{\delta : e^{-1} \mathcal O_{Y_0} \ar[r]& J}
	\end{equation*}
	We can view $\delta$ as a homomorphism $\ell_{Y_0} \to J$, or, equivalently, as an element of $H^0(U_0, \ell_{Y_0}^\vee \otimes J)$.  It induces an element $\gamma$ of $\mathfrak h \otimes J$ by composition with the projection $\ell_{Y_0}^\vee \to \mathfrak h$.

	To complete the definition of the map $G \to \mathfrak h \otimes J$, we observe that the definition of $\gamma \in H^0(U, \mathfrak h \otimes J)$ does not depend on the choice of $U'$, nor on the choice of map $g' : U' \to X'$.  More precisely, the image of $\delta$ in $H^1(U_0, \ell_{X_0/Y_0}^\vee \otimes J)$ is independent of the choice of map $g' : U' \to X'$ because the choices of $g'$ form a torsor under $H^0(U_0, \ell_{X_0}^\vee \otimes J)$ and the sequence
	\begin{equation} \label{eqn:8}
		H^0(U, \ell_{X_0}^\vee \otimes J) \to H^0(U, \ell_{Y_0}^\vee \otimes J) \to H^1(U, \ell_{X_0/Y_0}^\vee \otimes J)
	\end{equation}
	is exact.  Similarly, choices of $U'$ correspond to sections of $H^1(U, \ell_{G_0}^\vee \otimes J)$ and the influence of this choice on $H^0(U, \mathfrak h \otimes J)$ is dismissed by another exact sequence:
	\begin{equation} \label{eqn:7}
		\xymatrix{H^0(U, \ell_{G_0}^\vee \otimes J) \ar[r]& H^1(U, \ell_{X_0/Y_0}^\vee \otimes J) \ar[r]& H^0(U, \mathfrak h \otimes J)}
	\end{equation}
	Since $\gamma$ is independent of the choices of $U'$ and of $U' \to X'$, it is compatible with localization in $U$, and therefore glues to a section of $\mathfrak h \otimes J$ over $G$.  Since $\mathfrak h \otimes J$ is supported over $S_0$, this section is induced from a unique section of $\mathfrak h \otimes J$ over $G_0$.  We abuse $\gamma$ slightly and use it to denote this section as well.

	\vskip\baselineskip

	We verify next that $\gamma$ is a homomorphism.  That is, we must show that $\mu^\ast \gamma = p_1^\ast \gamma + p_2^\ast \gamma$ as a section of $\mathfrak h \otimes J$ over $G_0 \mathop\times_{S_0} G_0$, with $\mu$ denoting the group operation of $G_0$.  This is a local question in the Zariski topology of $G$, so we consider a morphism of schemes $p : U \to G \mathop\times_{S} G$ and an $\mathcal O_{S'}$-algebra extension $\mathcal O_{U'}$ of $p^{-1} \mathcal O_{G \mathop\times_S G}$ by $J \otimes \mathcal O_U$, which exists Zariski-locally since $G \mathop\times_{S} G \to S$ is a local complete intersection morphism.  As before, we write $U'$ for the locally ringed space $(U, \mathcal O_{U'})$.  Replacing $U$ by an open cover if necessary, we choose an extension $U' \to X' \mathop\times_{S'} X'$ of $U \to X \mathop\times_S X$, which exists by the smoothness of $X \mathop\times_S X$.  Then $\mu^\ast \gamma$ is induced by composition:
	\begin{equation} \label{eqn:5}
		U' \to X' \mathop\times_{S'} X' \xrightarrow\mu X' \to Y'
	\end{equation}
	The construction of $p_1^\ast \gamma + p_2^\ast \gamma$ uses the group structure of the sheaf $\mathfrak h \otimes J$, which is induced by the group structure of $\mathfrak y \otimes J$, where $\mathfrak y$ is the Lie algebra of $Y$.  We observe that this group structure is the same as the one induced from the group operation of $Y'$.\footnote{Indeed, $\mathfrak y \otimes J$ can be obtained by evaluating the functor of points of $Y'$ on certain schemes supported at the origin of $Y'$.  This induces a group structure on $\mathfrak y$ in two ways: from the group structure of $Y'$ and from pushout of square-zero extensions of the origin.  Since each of these group operations is a homomorphism with respect to the other, they coincide by the Eckmann--Hilton argument.}
	Thus, $p_1^\ast \gamma + p_2^\ast \gamma$ is induced from the composition
	\begin{equation} \label{eqn:6}
		U' \to X' \mathop\times_{S'} X' \to Y' \mathop\times_{S'} Y' \xrightarrow\mu Y' .
	\end{equation}
	Since the compositions~\eqref{eqn:5} and~\eqref{eqn:6} coincide, $\gamma$ is a homomorphism.

	\vskip\baselineskip

	To conclude the proof, we show that $\gamma$ obstructs the flatness of $G'$.  Suppose that $\gamma$ is $0$.  We argue that $G'$ is flat over $S'$.  This is a local question in the Zariski topology of $G$, so we make our usual choices of a morphism of schemes $p : U \to G$, an $\mathcal O_{S'}$-algebra extension $\mathcal O_{U'}$ of $p^{-1} \mathcal O_G$ by $J \otimes \mathcal O_U$, and a morphism $U' \to X'$ extending $U \to X$, with $U'$ being the locally ringed space $(U, \mathcal O_{U'})$.  Then the section $\delta \in H^0(U, \ell_{Y_0}^\vee \otimes J)$ constructed above induces an $\alpha \in H^1(U, \ell_{X_0/Y_0}^\vee \otimes J)$ that, by the exactness of~\eqref{eqn:7}, lies in the subgroup $H^1(U, \ell_{G_0}^\vee \otimes J) \subseteq H^1(U, \ell_{X_0/Y_0}^\vee \otimes J)$ (that this is a subgroup was shown in \Cref{L:1}).  But $H^1(U, \ell_{X_0/Y_0}^\vee \otimes J)$ acts simply transitively on the choices of deformation $U'$ of $(U, p^{-1} \mathcal O_G)$ above.  Replacing $U'$ by $U' - \alpha$, we find that $\delta \in H^0(U, \ell_{Y_0}^\vee \otimes J)$ induces $0$ in $H^1(U, \ell_{X_0/Y_0}^\vee \otimes J)$.\footnote{By \Cref{L:1}, the choice of $U'$ is unique up to unique isomorphism.}

	By the exactness of~\eqref{eqn:8}, $\delta$ lifts to $\beta \in H^0(U, \ell_{X_0}^\vee \otimes J)$.  But $H^0(U, \ell_{X_0}^\vee \otimes J)$ acts simply transitively on the choices of morphism $g' : U' \to X'$ used in the construction of $\delta$.  Replacing $g'$ by $g' - \beta$, we then find that $\delta = 0$.  But $\delta$ measured the deviation between the map $f'g' : U' \to Y'$ and the trivial map $e' : U' \to Y'$, so this means $g' : U' \to X'$ factors through $G'$.

	The reasoning of the previous paragraph applies in particular to all sufficiently small open subsets $U$ of $G$:  each of these admits a flat deformation over $S'$ \emph{inside} of $G'$.\footnote{At this point, we may also conclude by the same argument used to conclude the proof of \Cref{T:KerMorph}.} If $U$ is such an open subset then we have a commutative diagram:
	\begin{equation*} \xymatrix{
			J \otimes \mathcal O_{G'} \big|_{U'} \ar[r] \ar[d] & \mathcal O_{G'} \big|_{U'} \ar[d] \\
			J \otimes \mathcal O_{U'} \ar[r] & \mathcal O_{U'}
		}
	\end{equation*}
	Since the lower horizontal arrow is injective and the left vertical arrow is an isomorphism, this implies that $J \otimes \mathcal O_{G'} \to \mathcal O_{G'}$ is injective, and in particular, that $\operatorname{Tor}_p^{\mathcal O_{S'}}(\mathcal O_S, \mathcal O_{G'}) = 0$ for all $p > 0$.  But we have assumed that $\mathcal O_G$ is flat over $S$, so $\operatorname{Tor}_q^{\mathcal O_S}(\mathcal O_{S_0}, \mathcal O_G) = 0$ for all $q > 0$.  By the spectral sequence
	\begin{equation*}
		\operatorname{Tor}_q^{\mathcal O_S}( \mathcal O_{S_0}, \operatorname{Tor}_p^{\mathcal O_{S'}}(\mathcal O_S, \mathcal O_{G'})) \Rightarrow \operatorname{Tor}_{p+q}^{\mathcal O_{S'}}(\mathcal O_{S_0}, \mathcal O_{G'})
	\end{equation*}
	it now follows that $\operatorname{Tor}_n^{\mathcal O_{S'}}(\mathcal O_{S_0}, \mathcal O_{G'}) = 0$ for all $n > 0$, and therefore that $\mathcal O_{G'}$ is flat over $\mathcal O_{S'}$.
\end{proof}

\appendix

\section{A small observation on images of abelian schemes over Artinian rings}\label{S:ArtinRing}

The primary purpose of this section is to prove \Cref{T:ImIsAbSchArt}, which establishes that over Artinian local rings,  the image of a homomorphism of abelian schemes is an abelian scheme if and only if the image is flat. Recall that this need not hold over other bases (e.g.,  \Cref{E:Serre} and \Cref{E:charp}). 

 The secondary purpose of this section is to explain precisely where an error arose in a previous version of this paper (this error led us to the \emph{erroneous} conclusion that the image of a morphism of abelian schemes was always an abelian scheme); the error arose from assuming that the inclusion \eqref{E:PassTheCapInf} was an equality.  
 We discuss this in more detail in \S \ref{S:error-expl}.
   \Cref{T:ImIsAbSchArt} is the outcome of correcting this error.  

\begin{pro}\label{T:ImIsAbSchArt}
Let $f:X\to Y$ be a homomorphism of abelian schemes over $S=\operatorname{Spec}R$ where $(R,\mathfrak m)$ is an Artinian local ring with algebraically closed residue field.  Then $f(X)\subseteq Y$ is a sub-abelian scheme over $S$ if and only if $f(X)$ is flat over $S$. 
\end{pro}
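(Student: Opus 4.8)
The direction that an abelian scheme is flat is immediate, so I would focus on the converse. Write $Q:=f(X)\subseteq Y$ for the scheme theoretic image, $k$ for the algebraically closed residue field, $s$ for the closed point of $S$, and $A_0:=f_s(X_s)\subseteq Q_s$ for the image of the restriction $f_s\colon X_s\to Y_s$. By \Cref{P:FlatSuff} over the field $k$, $A_0$ is an abelian variety; since $f$ is proper the supports of $f_s(X_s)$ and $Q_s$ coincide (as in the proof of \Cref{T:containsabenough}), so $\dim A_0=\dim Q_s=:d$. As $Q_s$ is connected and $(Q_s)_{\mathrm{red}}$ is a reduced connected subgroup scheme containing the $d$-dimensional abelian variety $A_0$, one gets $(Q_s)_{\mathrm{red}}=A_0$. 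The plan is to prove that $Q_s$ is actually \emph{reduced}, i.e.\ $Q_s=A_0$. Once this is known, $Q_s$ is smooth (a reduced group scheme over the perfect field $k$), so $Q$, being flat, proper, and of finite presentation over $R$ with smooth connected fibers, is a smooth proper group scheme with connected fibers, hence an abelian scheme — exactly the fiberwise smoothness criterion used in \Cref{P:FlatSuff}.

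To detect reducedness I would use the prime-to-$p$ torsion, as in the proof of \Cref{T:NecSuff}. Choose a prime $\ell$ invertible in $R$. Then each $X[\ell^n]$ is finite étale over $R$, hence (as $k$ is separably closed) a disjoint union of copies of $S$, and likewise $T_n:=f(X[\ell^n])\subseteq Q$ is a finite étale, in particular flat, sub-$S$-group scheme. Since $X=\overline{\bigcup_n X[\ell^n]}$ and scheme theoretic image commutes with scheme theoretic closure, we obtain $Q=\overline{\bigcup_n T_n}$; on the special fiber $T_{n,s}=A_0[\ell^n]$ is reduced and $\bigcup_n T_{n,s}$ is dense in $A_0$, so $\overline{\bigcup_n T_{n,s}}=A_0=(Q_s)_{\mathrm{red}}$. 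Working locally on an affine open $U=\operatorname{Spec}B$ of $Q$, with $I_n\subseteq B$ the ideal of $T_n$ and $\bar I_n\subseteq B_0:=B\otimes_Rk$ the ideal of $T_{n,s}$, these two statements say precisely that $\bigcap_nI_n=0$ in $B$, while $\bigcap_n\bar I_n$ is the nilradical of $B_0$. Thus it suffices to prove that $\bigcap_n\bar I_n=0$.

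This last point is the crux, and it is exactly where the flatness of $Q$ enters (the earlier, erroneous version of this paper implicitly assumed that forming $\bigcap_n$ commutes with reduction modulo $\mathfrak m$, which fails without flatness). Since $B$ and each quotient $B/I_n=\mathcal O_{T_n}(U)$ are flat over $R$, each $I_n$ is flat as well, and a standard consequence of flatness is that $I_n\cap\mathfrak m^jB=\mathfrak m^jI_n$ for all $j$, with $\mathfrak m^jI_n/\mathfrak m^{j+1}I_n\cong\bar I_n\otimes_k(\mathfrak m^j/\mathfrak m^{j+1})$ mapping injectively into $\mathfrak m^jB/\mathfrak m^{j+1}B\cong B_0\otimes_k(\mathfrak m^j/\mathfrak m^{j+1})$. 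Because $R$ is Artinian, $\mathfrak m^N=0$ and $\mathfrak m^{N-1}\neq0$ for some $N$; in the top graded degree this subquotient is a genuine submodule, namely $\mathfrak m^{N-1}I_n=I_n\cap\mathfrak m^{N-1}B$, so intersection over $n$ commutes with passing to it:
\[
\Big(\bigcap_n\bar I_n\Big)\otimes_k\mathfrak m^{N-1}
=\bigcap_n\big(\bar I_n\otimes_k\mathfrak m^{N-1}\big)
=\bigcap_n\big(I_n\cap\mathfrak m^{N-1}B\big)
=\Big(\bigcap_nI_n\Big)\cap\mathfrak m^{N-1}B=0.
\]
As $\mathfrak m^{N-1}\neq0$ is a nonzero $k$-vector space, this forces $\bigcap_n\bar I_n=0$, so $Q_s$ is reduced and the argument concludes.

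The main obstacle is thus concentrated entirely in this commutative-algebra step. Without flatness the identity $I_n\cap\mathfrak m^jB=\mathfrak m^jI_n$ breaks down, the passage to the special fiber no longer commutes with the infinite intersection, and $Q_s$ may genuinely be nonreduced — precisely the phenomenon realized in \Cref{E:Serre} and \Cref{E:charp}, where over a DVR the (automatically flat) image has nonreduced special fiber, and where restricting to an Artinian thickening produces a scheme theoretic image that is \emph{not} flat.
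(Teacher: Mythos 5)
Your argument is correct, and its overall skeleton coincides with the paper's: reduce to showing that the (flat, proper, connected) group scheme $Q=f(X)$ has reduced special fiber, and detect reducedness via the schematically dense family of finite \'etale subgroup schemes $T_n=f(X[\ell^n])$, exactly as in the proofs of \Cref{T:NecSuff} and \Cref{T:ImIsAbSchArt}. Where you genuinely diverge is in the key commutative-algebra step, which the paper isolates as \Cref{P:FlatRedFib} (affine form \Cref{P:FlatRedFibAff}): given $\bigcap_n I_n=0$ with each $B/I_n$ finite flat with reduced special fiber and $B$ flat, conclude $B_0=B/\mathfrak m B$ is reduced. The paper embeds $B$ into the infinite product $\widehat B=\prod_n B/I_n$, proves $\widehat B$ is flat (\Cref{L:Ahat flat}), and uses the flatness criterion of \Cref{L:A flat} to deduce that $B_0\to\widehat B/\mathfrak m\widehat B\cong\prod_n B_0/\bar I_n$ is injective with reduced target. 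You instead run a ``socle'' argument: flatness of $B/I_n$ gives $I_n\cap\mathfrak m^{N-1}B=\mathfrak m^{N-1}I_n\cong\bar I_n\otimes_k\mathfrak m^{N-1}$ inside $\mathfrak m^{N-1}B\cong B_0\otimes_k\mathfrak m^{N-1}$, and since arbitrary intersections of subspaces commute with $\otimes_k V$ for the finite-dimensional $V=\mathfrak m^{N-1}\neq 0$, the vanishing of $\bigcap_n I_n$ forces $\bigl(\bigcap_n\bar I_n\bigr)\otimes_k\mathfrak m^{N-1}=0$, hence $\bigcap_n\bar I_n=0$. This is shorter and more elementary --- it replaces the product construction, \Cref{L:3}, and the inductive flatness criterion by a single computation in the top graded piece of the $\mathfrak m$-adic filtration --- and it correctly pinpoints that flatness is what makes the infinite intersection commute with reduction mod $\mathfrak m$, which is precisely the failure the paper diagnoses in \S\ref{S:error-expl}. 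What the paper's route buys in exchange is a reusable standalone statement (\Cref{P:FlatRedFib}) and an explicit injection of $Q_s$ into a product of reduced rings; note also that the pairwise-disjointness hypothesis there is not needed for your version, and you rightly work with the nested $T_n$ directly. Two small points you should make explicit: $Q$ is a subgroup scheme of $Y$ (so that $(Q_s)_{\mathrm{red}}$ is a subgroup scheme over the perfect field $k$), and each $I_n$ is $R$-flat because it is the kernel of a surjection of flat modules $B\twoheadrightarrow B/I_n$; both are routine.
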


\begin{rem}[Images of abelian schemes over Artinian rings]\label{R:ImNotStable}
Here we discuss the restriction to Artinian local rings of the examples \Cref{E:Serre} and \Cref{E:charp} of  morphisms of abelian
schemes $f:X\to Y$ over  DVRs $R$ where the image $f(X)$ is  flat over $R$, but is  not an abelian scheme. 
 There exists an Artinian local ring $A$ supported  at the special point of $R$ such that the restriction $f_A:X_A\to Y_A$ gives a morphism of abelian schemes over an Artinian local ring $A$ where the image is not an abelian scheme.  Indeed, if the image were an abelian scheme for every such Artinian local ring, then \Cref{L:R2Artin} would imply that $f(X)$ was an abelian scheme.  Let us consider this example further.  Picking such an Artinian ring, and having established that  $f_A(X_A)$ is not an abelian scheme, then by \Cref{T:ImIsAbSchArt}, we have that $f_A(X_A)$ is not a flat group scheme.  Consequently, since $f(X)_A$ is flat by base change, we have that the natural inclusion $f_A(X_A)\subseteq f(X)_A$ (e.g., \cite[p.216]{EH00})  is not an equality.  
\end{rem}

The key point we will use is the following technical proposition, whose proof we postpone until \S\ref{S:ProofFlatRedFib}. 

 \begin{pro}
    \label{P:FlatRedFib}
    Let $Z$ be a scheme over $S=\operatorname{Spec}R$ where $(R,\mathfrak m)$ is a local Artinian ring, and  suppose there is a collection of closed subschemes $W_n \subseteq Z$ such that
  \begin{enumerate}[label=(\alph*)]
   \item   $W_n$ is finite and flat over $S$ with reduced  special fiber, 
   \item  $W_m \cap W_n = \emptyset$ for $m\ne n$, 
   \item and the collection $\st{W_n}$ is schematically dense in $Z$; i.e., $\overline {\bigcup W_n} = Z$.
   \end{enumerate}
    Then if $Z$ is flat over $S$, then it has reduced special fiber.
\end{pro}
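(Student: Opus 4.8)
The plan is to reduce to a local, purely commutative-algebra statement about a flat algebra sitting inside a product of algebras with reduced special fibre, and then to run an annihilator argument against a socle element of $R$; this socle argument is precisely what replaces the faulty equality \eqref{E:PassTheCapInf}.

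First I would reduce to the affine, ring-theoretic setting. Reducedness of the special fibre $Z_s = Z\times_S\operatorname{Spec}k$ is local on $Z$, so it suffices to work on an affine open $U = \operatorname{Spec} B$, where $B = \Gamma(U,\mathcal O_Z)$ is flat over $R$ (as $Z$ is). Each intersection $W_n\cap U$ is an open and closed subscheme of the finite discrete $R$-scheme $W_n$, hence is $\operatorname{Spec} A_n$ for a finite flat $R$-algebra $A_n$ that is a direct factor of $\Gamma(W_n,\mathcal O_{W_n})$; in particular $A_n\otimes_R k$ is still reduced. The schematic density hypothesis $\overline{\bigcup W_n}=Z$ says the ideal sheaf $\bigcap_n\mathcal I_n$ vanishes; taking sections over $U$ (using that global sections commute with products of sheaves) this becomes the injectivity of the $R$-algebra map $B\hookrightarrow P:=\prod_n A_n$. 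Thus I am reduced to proving: \emph{if $B$ is flat over $R$ and $B\hookrightarrow P$, and $P\otimes_R k$ is reduced, then $B\otimes_R k$ is reduced.} (Notably, disjointness (b) plays no role in this argument.)

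Next I would record that $P\otimes_R k$ is reduced. Since $R$ is Noetherian, $\mathfrak m$ is finitely generated, and writing out generators gives $\mathfrak m P=\prod_n\mathfrak m A_n$; hence $P\otimes_R k = P/\mathfrak m P\cong\prod_n(A_n/\mathfrak m A_n)=\prod_n(A_n\otimes_R k)$, a product of reduced rings, so reduced.

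The heart of the proof is then the socle argument. Because a ring is reduced exactly when it has no nonzero square-zero element, I suppose $\bar b\in B\otimes_R k$ satisfies $\bar b^2=0$ and aim to show $\bar b=0$. Lift $\bar b$ to $b\in B$, so that $b^2\in\mathfrak m B\subseteq\mathfrak m P$; since $P\otimes_R k$ is reduced and the image of $b$ there squares to zero, that image vanishes, i.e.\ $b\in\mathfrak m P$. If $R$ is a field this already forces $b=0$. Otherwise choose a nonzero socle element $\theta\in\operatorname{soc}(R)=\operatorname{Ann}_R(\mathfrak m)$, so $\mathfrak m\theta=0$ and, by maximality of $\mathfrak m$, $\operatorname{Ann}_R(\theta)=\mathfrak m$. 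Then $\theta b\in\theta\,\mathfrak m P=0$ in $P$, hence $\theta b=0$ in $B$. Finally, flatness of $B$ yields the identity $\operatorname{Ann}_B(\theta)=\operatorname{Ann}_R(\theta)\cdot B=\mathfrak m B$ (the standard fact that $\operatorname{Ann}_M(r)=\operatorname{Ann}_R(r)\,M$ for a flat module $M$), so $b\in\mathfrak m B$, i.e.\ $\bar b=0$. This proves $B\otimes_R k$ is reduced, and gluing over an affine cover of $Z$ shows $Z_s$ is reduced.

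The step I expect to be the main obstacle is exactly this socle argument: the delicate points are that one must deduce $b\in\mathfrak m P$ from reducedness of $P\otimes_R k$ (rather than of $B\otimes_R k$, which is what we are trying to establish), and that the contradiction comes from the flatness identity $\operatorname{Ann}_B(\theta)=\mathfrak m B$ together with $\operatorname{Ann}_R(\theta)=\mathfrak m$. This is the mechanism by which flatness of $Z$ converts ``reduced fibres of the dense pieces'' into ``reduced fibre of $Z$'' without passing through the false equality \eqref{E:PassTheCapInf}.
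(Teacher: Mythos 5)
Your proof is correct, and while it reduces to the same affine statement as the paper (a flat $R$-algebra $B$ embedded in $P=\prod_n A_n$ with $P/\mathfrak m P$ reduced, cf.\ \Cref{P:FlatRedFibAff}), the engine driving the conclusion is genuinely different. The paper first proves that $\widehat A=\prod A/I_j$ is flat over $R$ (\Cref{L:3}, \Cref{L:Ahat flat}) and then establishes the general criterion of \Cref{L:A flat} --- for an injection $B\hookrightarrow C$ with $C$ flat, flatness of $B$ is \emph{equivalent} to injectivity of $B/\mathfrak m B\to C/\mathfrak m C$ --- by induction along small extensions, finally embedding $A/\mathfrak m A$ into the reduced ring $\widehat A/\mathfrak m\widehat A$. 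You replace all of this with one socle computation: from $b\in\mathfrak m P$ you get $\theta b=0$ for a nonzero $\theta\in\operatorname{soc}(R)$, and flatness of $B$ gives $\operatorname{Ann}_B(\theta)=\operatorname{Ann}_R(\theta)B=\mathfrak m B$, whence $b\in\mathfrak m B$. Note that this actually proves $\mathfrak m P\cap B=\mathfrak m B$, i.e.\ exactly the injectivity of $B/\mathfrak m B\to P/\mathfrak m P$ that the paper extracts from \Cref{L:A flat}, but in a single step, with no induction, and without ever needing the flatness of the infinite product $P$ (you only use $P/\mathfrak m P\cong\prod A_n/\mathfrak m A_n$, which follows from finite generation of $\mathfrak m$). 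What the paper's longer route buys is the two-sided flatness criterion of \Cref{L:A flat}, which is used to diagnose the earlier error in \S\ref{S:error-expl}; what your route buys is brevity and a sharp localization of where flatness of $B$ enters. Your parenthetical observation that the disjointness hypothesis (b) is not used is also accurate --- it plays no role in the paper's corrected argument either, only in the erroneous Chinese-Remainder version recounted in \S\ref{S:error-expl}.
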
  

Before proving  \Cref{P:FlatRedFib}, we use it to prove \Cref{T:ImIsAbSchArt}.

\begin{proof}[Proof of \Cref{T:ImIsAbSchArt}]
  Let $Z=f(X)$ be the schematic image of $f$.  Using that $f$ is a   group homomorphism, together with the universal property of the schematic image, one can show that $Z$ is an $S$-group scheme.  Moreover, $Z$
  is proper over $S$, being a closed subscheme of $Y$.   We also claim that $Z$ has connected central fiber.  Indeed, let $s$ be the closed point of $S$; there is always an inclusion $f_s(X_s) \subseteq (f(X))_s$ (e.g., \cite[p.~216]{EH00}), however, since $f$ is proper, one has that the support of the two schemes is the same (e.g., \cite[p.~218]{EH00}).  Therefore, since $X_s$ is connected, so is $f(X_s)$, and therefore, since connectedness is a statement about the support, we have that $(f(X))_s$ is also connected.
  Since a
  geometrically reduced group scheme is smooth, to show
  that $Z$ is an abelian scheme, it now
  suffices to show that $Z$ is flat with reduced special fiber (the residue field is assumed to be algebraically closed).

To this end, choose  a prime $\ell$ that is invertible in $R$, and consider the closed subschemes $X[\ell^n]\hookrightarrow X$ and $Y[\ell^n]\hookrightarrow Y$ for all $n$ .  
It is a basic fact  (e.g.,  \cite[Proof of Thm.~3.19, p.54]{conradtrace}) that 
$$
X = \overline {\bigcup X [\ell^n]}.
$$ 
  Moreover, from our choice of $\ell$, the $X [\ell^n]$   are proper \'etale group schemes over $S$, and since $R$ is an Artinian local ring with algebraically closed residue field, each of 
$X [\ell^n]$ and $Y[\ell^n]$ consist of irreducible components canonically isomorphic to $S$.  The restricted morphism $f[\ell^n]\colon X [\ell^n]\to Y[\ell^n]$ is a morphism over $S$, and therefore, on each irreducible component $f[\ell^n]$ is the identity (although some components of $X [\ell^n]$ may map to the same component of $Y[\ell^n]$).  Consequently, the kernel $\ker f[\ell^n]$  of $f[\ell^n]\colon X [\ell^n]\to Y[\ell^n]$  is a proper \'etale group scheme over $S$, and the quotient  $X [\ell^n]/\ker f[\ell^n]$ (see, e.g., \Cref{T:quot}) is a proper \'etale group scheme over $S$.  Clearly we have the agreement $f(X  [\ell^n])= f[\ell^n](X [\ell^n])= X [\ell^n]/\ker f[\ell^n]$.   
Since scheme theoretic images and closures commute (the scheme theoretic closure is the scheme theoretic image of the morphism from the disjoint union of the closed subschemes),  we have 
\begin{equation}\label{E:pr:f(X)11}
Z:= f(X )= f(\overline {\bigcup X [\ell^n]})= \overline {\bigcup f(X [\ell^n])}
\end{equation}
so that $Z$ is the scheme theoretic closure of the proper \'etale  group schemes $f(X [\ell^n])$.

We now want to invoke \Cref{P:FlatRedFib}.  For this, let $W_n:= f(X[\ell^n])\smallsetminus f(X[\ell^{n-1}])$.  These clearly satisfy the conditions in the proposition, and therefore, assuming $Z$ is flat over $S$, we see it has reduced special fiber, completing the proof.
\end{proof}

\subsection{Proof of \Cref{P:FlatRedFib}}\label{S:ProofFlatRedFib}

Because scheme theoretic closures are Zariski local (scheme theoretic image is stable under flat base change \cite[Prop.~V.8]{EH00}), it suffices to prove the following affine version of \Cref{P:FlatRedFib}: 

\begin{pro}
    \label{P:FlatRedFibAff}
    Let $A$ be an $R$-algebra, where $(R,\mathfrak m_R)$ is a local Artinian ring, and  suppose there is a collection of  ideals $I_n\subseteq A$  such that
  \begin{enumerate}[label=(\alph*)]
   \item  \label{P:FlatRedFibAff-red} $A/I_n$ is a finite flat $R$-module with $(A/I_n)\otimes_R (R/\mathfrak m_R)$ reduced, 
   \item  \label{P:FlatRedFibAff-cop}$I_m + I_n = (1)$ for $m\ne n$, and, 
   \item \label{P:FlatRedFibAff-dense}  $\bigcap I_n = (0)$.
   \end{enumerate}
    If $A$ is flat over $R$, then $A\otimes_R (R/\mathfrak m_R)$ is reduced.
\end{pro}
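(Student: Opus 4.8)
The plan is to induct on the length $\ell(R)$ of the Artinian local ring $R$. Write $\mathfrak m=\mathfrak m_R$ and $k=R/\mathfrak m$. In the base case $\ell(R)=1$ we have $R=k$ and $\mathfrak m=0$, so $A\otimes_R k=A$; since $\bigcap_n I_n=(0)$ the canonical map $A\to\prod_n A/I_n$ is injective, and each $A/I_n$ is a finite reduced $k$-algebra by hypothesis, so $A$ embeds into a product of reduced rings and is therefore reduced. For the inductive step I would peel off one socle layer of $R$ at a time, and the entire difficulty will concentrate in a single intersection identity.

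For the inductive step, set $I=\operatorname{ann}_R(\mathfrak m)=(0:_R\mathfrak m)$, the socle of $R$. This is a nonzero ideal (the last nonvanishing power of the nilpotent ideal $\mathfrak m$ lies in it) and $\mathfrak m I=0$, so $\ell(R/I)<\ell(R)$. Put $R_0=R/I$, $A_0=A/IA=A\otimes_R R_0$, and let $I_n^0=(I_n+IA)/IA$ be the image of $I_n$ in $A_0$. Since $A$ is flat over $R$, $A_0$ is flat over $R_0$; moreover $A_0/I_n^0=(A/I_n)\otimes_R R_0$ is finite flat over $R_0$ with special fibre $(A/I_n)\otimes_R k$, which is reduced, and comaximality of the $I_n$ passes to the $I_n^0$. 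Because $I\subseteq\mathfrak m$ one checks $A_0\otimes_{R_0}k=A\otimes_R k$, so it suffices to apply the inductive hypothesis to $(A_0,\{I_n^0\})$ over $R_0$. The only hypothesis that is not immediate is $\bigcap_n I_n^0=(0)$, that is, the identity $\bigcap_n (I_n+IA)=IA$. This is exactly the point where the naive expectation ``intersection of images $=$ image of intersection'' fails, and it is the heart of the matter.

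To prove $\bigcap_n(I_n+IA)=IA$ I would use flatness twice. First, writing $\mathfrak m=(\mu_1,\dots,\mu_g)$ and tensoring the exact sequence $0\to(0:_R\mathfrak m)\to R\xrightarrow{(\mu_1,\dots,\mu_g)}R^{\oplus g}$ with the flat module $A$ yields $\operatorname{ann}_A(\mathfrak m)=(0:_R\mathfrak m)\cdot A=IA$. Second, take $a\in\bigcap_n(I_n+IA)$ and, for each $n$, write $a=w_n+\theta_n$ with $w_n\in I_n$ and $\theta_n\in IA$. For any $\mu\in\mathfrak m$ we have $\mu\theta_n\in\mathfrak m\cdot IA=(\mathfrak m I)A=0$, hence $\mu a=\mu w_n\in I_n$ for every $n$; therefore $\mu a\in\bigcap_n I_n=(0)$. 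As $\mu\in\mathfrak m$ was arbitrary, $\mathfrak m a=0$, so $a\in\operatorname{ann}_A(\mathfrak m)=IA$. This proves the required identity, closes the induction, and shows $A\otimes_R k$ is reduced.

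The main obstacle is precisely the identity $\bigcap_n(I_n+IA)=IA$: pushing images through an infinite intersection is false for a general (non-flat) $A$, and this is the gap in the earlier version. The mechanism that rescues it is the combination of $\mathfrak m I=0$ (so that one extra factor of $\mathfrak m$ kills the $IA$-part and forces $\mu a$ into $\bigcap_n I_n=0$) with the flatness identity $\operatorname{ann}_A(\mathfrak m)=\operatorname{ann}_R(\mathfrak m)\,A$; choosing $I$ to be the entire socle $\operatorname{ann}_R(\mathfrak m)$, rather than a single principal minimal ideal, is what makes these two match exactly. Reducedness of the special fibres of the $A/I_n$ is then used only in the base case, where $A$ appears as a subring of the reduced ring $\prod_n A/I_n$.
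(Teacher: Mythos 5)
Your proof is correct, but it takes a genuinely different route from the one in the paper. The paper forms the product $\widehat A = \prod_j A/I_j$, proves $\widehat A$ is flat over $R$ (via the fact that tensoring a finitely presented module against an arbitrary product commutes with the product), and then invokes a flatness criterion for an injection $A \hookrightarrow \widehat A$ with flat target to conclude that $A/\mathfrak m_R A$ injects into $\widehat A/\mathfrak m_R \widehat A \cong \prod_j A/(I_j + \mathfrak m_R A)$, a product of reduced rings. You instead induct on the length of $R$, peeling off the socle $I = \operatorname{ann}_R(\mathfrak m_R)$, and the entire weight of the argument falls on the identity $\bigcap_n (I_n + IA) = IA$, which you establish by combining $\mathfrak m_R I = 0$ with the flatness consequence $\operatorname{ann}_A(\mathfrak m_R) = IA$. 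This is a valid and rather elegant localization of the difficulty: the identity you prove is precisely a special case of the ``pass the intersection through the sum'' identity \eqref{E:PassTheCapInf} whose failure in general caused the error discussed in \S\ref{S:error-expl}, and your argument pinpoints exactly the hypotheses (flatness of $A$, and $\mathfrak J = IA$ with $I$ the socle) under which it does hold. Your route is more elementary in that it avoids both the flatness of the infinite product $\widehat A$ and the product-commutes-with-tensor lemma, using the product of reduced rings only in the base case over the residue field; the paper's route, by contrast, isolates reusable general lemmas (\Cref{L:3}, \Cref{L:A flat}) and makes the mechanism of the earlier error more visible by exhibiting $A/\mathfrak m_R A \to \widehat A/\mathfrak m_R \widehat A$ as the map whose injectivity is equivalent to flatness. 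Both arguments ultimately run an induction along small extensions of $R$; yours does so explicitly at the top level, while the paper buries it inside the proof of \Cref{L:A flat}.
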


\subsubsection{The local flatness criterion}
We start by recalling the local flatness criterion, in the form we will use it.
Recall (see also \ref{SSS:small}) that a local ring $(R,\mathfrak m_R)$ is a small extension of a ring $R_0$ if there is an exact sequence
\begin{equation}
  \label{E:small}
  \xymatrix{
    0 \ar[r] & (\epsilon) \ar[r] & R \ar[r] & R_0 \ar[r] & 0
  }
\end{equation}
with $\mathfrak m_R \epsilon = 0$.  Note that by assumption $\epsilon$ is not a unit, and so is contained in $\mathfrak m_R$, and the condition $\mathfrak m_R\epsilon = 0$ implies in particular that $\epsilon^2=0$, so that $(\epsilon)$ is nilpotent.  We will use the following well-known flatness criterion:

\begin{lem}
\label{L:flat criterion}
  Let $M$ be an $R$-module with $(R,\mathfrak m)$ a local ring.
  \begin{enumerate}[label=(\alph*)]
\item    Then $M$ is flat if and only if the natural map
  \[
    \xymatrix{\mathfrak m_R \otimes_R M \ar[r]& M}
  \]
  is an injection.

\item Suppose $R$ is a small extension of $R_0$ as in \eqref{E:small}.  Then $M$ is flat if and only if
  \begin{enumerate}[label=(\roman*)]
  \item $M\otimes_R R_0$ is flat over $R_0$, and
  \item $(\epsilon) \otimes_R M \to M$ is injective.
  \end{enumerate}
\end{enumerate}
\end{lem}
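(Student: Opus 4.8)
The plan is to translate both injectivity hypotheses into the vanishing of a single $\operatorname{Tor}$ group, and then to recognize the resulting statement as the local criterion for flatness. For any ideal $I \subseteq R$, tensoring the short exact sequence $0 \to I \to R \to R/I \to 0$ with $M$ and using $\operatorname{Tor}_1^R(R,M)=0$ produces an exact sequence
\[
0 \to \operatorname{Tor}_1^R(R/I, M) \to I \otimes_R M \to M \to (R/I)\otimes_R M \to 0,
\]
so that the natural map $I \otimes_R M \to M$ is injective if and only if $\operatorname{Tor}_1^R(R/I, M) = 0$. Taking $I = \mathfrak m_R$ rewrites the condition in part (a), and taking $I = (\epsilon)$ rewrites condition (b)(ii), as the vanishing of $\operatorname{Tor}_1^R(R/\mathfrak m_R, M)$ and of $\operatorname{Tor}_1^R(R_0, M)$, respectively.

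With this reformulation, the lemma becomes precisely the local criterion for flatness: for a \emph{nilpotent} ideal $I$, the module $M$ is flat over $R$ if and only if $M \otimes_R R/I$ is flat over $R/I$ and $\operatorname{Tor}_1^R(R/I, M) = 0$. Part (b) is exactly this statement for $I = (\epsilon)$; the hypothesis $\mathfrak m_R \epsilon = 0$ forces $\epsilon^2 = 0$, so $I$ is square-zero, condition (b)(i) is the flatness of $M \otimes_R R_0$, and condition (b)(ii) is the injectivity discussed above. Part (a) is the same statement for $I = \mathfrak m_R$: here $R/I = k$ is a field, so the requirement that $M \otimes_R k$ be flat over $k$ is automatic, leaving only the injectivity of $\mathfrak m_R \otimes_R M \to M$. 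In the Artinian setting in which we apply the lemma the ideal $\mathfrak m_R$ is nilpotent, so the criterion applies; (for a discrete valuation ring the same equivalence holds for the separate reason that flatness is equivalent to torsion-freeness, which is exactly measured by $\operatorname{Tor}_1^R(k, M)$).

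It remains to recall the criterion itself, which is standard. The forward implication is formal: flatness is stable under base change, giving (b)(i), and $\operatorname{Tor}_1^R(-,M)$ vanishes on a flat $M$, giving the injectivity. For the reverse implication, which is the substance, I would first handle the square-zero case $I^2 = 0$ (directly relevant to part (b)) and then reduce the general nilpotent case (needed for part (a)) to it by filtering $I \supseteq I^2 \supseteq \cdots$. In the square-zero case one shows $\operatorname{Tor}_1^R(N, M) = 0$ for every $R$-module $N$: for an $R_0$-module $N$ this follows from the change-of-rings spectral sequence
\[
\operatorname{Tor}_p^{R_0}(N, \operatorname{Tor}_q^R(R_0, M)) \Rightarrow \operatorname{Tor}_{p+q}^R(N, M)
\]
together with the flatness of $M_0 := M \otimes_R R_0$ and the hypothesis $\operatorname{Tor}_1^R(R_0, M) = 0$, which kill the two low-degree contributions to $\operatorname{Tor}_1^R(N,M)$; for general $N$ one applies the long exact sequence to $0 \to IN \to N \to N/IN \to 0$, both ends being $R_0$-modules since $I^2 = 0$. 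As this is well known, I would ultimately cite it (the local criterion for flatness, e.g.\ in \cite{stacks-project} or Matsumura) rather than reproduce it. Accordingly there is no genuine obstacle here: the only point requiring care is verifying the hypotheses of the cited criterion, namely the nilpotence of the relevant ideal and the flatness of the reduction $M \otimes_R R_0$, which is precisely what the square-zero shape of $(\epsilon)$ and the field $k = R/\mathfrak m_R$ provide.
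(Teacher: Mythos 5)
Your proposal is correct and follows essentially the same route as the paper, which simply invokes the local criterion for flatness (Matsumura, Thm.~22.3) for the nilpotent ideals $\mathfrak m_R$ (in the Artinian setting) and $(\epsilon)$. You spell out the standard translation of the injectivity conditions into $\operatorname{Tor}_1$-vanishing and sketch the proof of the cited criterion, but the substance is identical, including the observation that the nilpotence needed for part (a) comes from the Artinian hypothesis under which the lemma is actually applied.
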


\begin{proof}
 In an Artinian local ring, the maximal ideal $\mathfrak m_R$ is nilpotent, and so  \cite[Thm.~22.3(3)]{Mat89} gives (a); similarly, applying that theorem to the nilpotent ideal $(\epsilon)$ gives (b).
   \end{proof}

\begin{lem}
\label{L:trivial tensor}
Suppose $R$ is a small extension of $R_0$ as in \eqref{E:small} and
that $B$ is an $R$-algebra.  Then
any element of $(\epsilon)\otimes_R B$ is represented by $\epsilon
\otimes_R b$ for some $b \in B$; and $\epsilon\otimes_R b = 0$ if and
only if $b \in \mathfrak m_R B$.
\end{lem}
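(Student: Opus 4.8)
The plan is to reduce the whole statement to the single observation that, as an $R$-module, the ideal $(\epsilon)$ is isomorphic to the residue field $k = R/\mathfrak m_R$. Granting that, both claims fall out by tensoring with $B$. I would begin with the first assertion, which is purely formal and needs nothing beyond the fact that $(\epsilon) = R\epsilon$ is a cyclic $R$-module: a general element of $(\epsilon)\otimes_R B$ is a finite sum $\sum_i (r_i\epsilon)\otimes b_i$ with $r_i\in R$ and $b_i\in B$, and the balanced-product relation $(r_i\epsilon)\otimes b_i = \epsilon\otimes(r_i b_i)$ lets me collect terms into $\epsilon\otimes b$ with $b:=\sum_i r_i b_i\in B$. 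Hence every element of $(\epsilon)\otimes_R B$ is represented by $\epsilon\otimes_R b$ for some $b$.

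The key step for the second assertion is the computation of $\operatorname{Ann}_R(\epsilon)$. The smallness hypothesis $\mathfrak m_R\epsilon = 0$ gives $\mathfrak m_R\subseteq\operatorname{Ann}_R(\epsilon)$. Since the extension \eqref{E:small} is nontrivial we have $\epsilon\neq 0$, so $\operatorname{Ann}_R(\epsilon)$ is a proper ideal of the local ring $R$ and is therefore contained in the unique maximal ideal $\mathfrak m_R$; thus $\operatorname{Ann}_R(\epsilon) = \mathfrak m_R$. Consequently the surjection $r\mapsto r\epsilon$ descends to an isomorphism of $R$-modules $R/\mathfrak m_R\xrightarrow{\sim}(\epsilon)$. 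Tensoring this isomorphism with $B$ over $R$ yields a canonical isomorphism
$$
(\epsilon)\otimes_R B\;\cong\;(R/\mathfrak m_R)\otimes_R B\;=\;B/\mathfrak m_R B,
$$
under which $\epsilon\otimes_R b$ corresponds to the residue class $\bar b$. The equivalence is then immediate: $\epsilon\otimes_R b = 0$ if and only if $\bar b = 0$ in $B/\mathfrak m_R B$, which is exactly the condition $b\in\mathfrak m_R B$. (This display also re-proves the first assertion, since $B/\mathfrak m_R B$ consists precisely of such classes.)

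I do not expect a genuine obstacle here, as the content is entirely formal. If I had to single out the load-bearing step it is the annihilator computation $\operatorname{Ann}_R(\epsilon) = \mathfrak m_R$, which is the only place the small-extension hypothesis $\mathfrak m_R\epsilon = 0$ is used, together with the implicit nontriviality $\epsilon\neq 0$ of \eqref{E:small}; without the latter the forward direction of the second assertion would fail.
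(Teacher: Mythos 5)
Your proof is correct and follows essentially the same route as the paper's: both identify $\operatorname{Ann}_R(\epsilon)=\mathfrak m_R$, deduce $(\epsilon)\cong R/\mathfrak m_R$ as $R$-modules, and tensor with $B$ to get $(\epsilon)\otimes_R B\cong B/\mathfrak m_RB$, from which both assertions follow. Your write-up is slightly more careful in spelling out why the annihilator is not all of $R$ (i.e., that $\epsilon\neq 0$), a point the paper leaves implicit in "the definition of a small extension."
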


\begin{proof}
	Since, by the definition of a small extension,  the annihilator of $\epsilon$ is $\mathfrak m_R$, we have $(\epsilon) \iso R / \mathfrak m_R R$ and therefore $(\epsilon) \otimes_R B \iso B / \mathfrak m_R B$.
\end{proof}

\subsubsection{Applying the flatness criterion in our situation}

We define
$$\widehat A = \prod A/I_j.$$

\begin{lem}
	\label{L:3}
	If $N$ is a finitely presented $A$-module then the map
	\begin{equation*}
		N \mathop\otimes_A \prod A/I_j \to \prod (N \mathop\otimes_A A/I_j)
	\end{equation*}
	is an isomorphism.
\end{lem}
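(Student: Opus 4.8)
The plan is to deduce \Cref{L:3} from the standard fact that tensoring with a finitely presented module commutes with arbitrary products, which one proves by reducing to free modules and invoking right-exactness. Throughout, write $M_j = A/I_j$, so that $\widehat A = \prod_j M_j$, and let
$$
\theta_N \colon N \mathop\otimes_A \prod_j M_j \longrightarrow \prod_j \bigl(N \mathop\otimes_A M_j\bigr)
$$
denote the comparison map sending $n \otimes (m_j)_j$ to $(n \otimes m_j)_j$. The key structural observation I would record first is that $\theta_N$ is \emph{natural} in $N$, so that any homomorphism of $A$-modules $N \to N'$ gives a commuting square relating $\theta_N$ and $\theta_{N'}$; this naturality is what powers the whole argument.

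First I would verify the claim for finite free modules. For $N = A$ the map $\theta_A$ is the identity of $\prod_j M_j$, hence an isomorphism. Since both functors $N \mapsto N \otimes_A \prod_j M_j$ and $N \mapsto \prod_j(N \otimes_A M_j)$ carry finite direct sums to finite direct sums, and $\theta$ is compatible with these identifications, it follows that $\theta_{A^n}$ is an isomorphism for every $n \ge 0$.

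Next, because $N$ is finitely presented, I would choose a finite presentation $A^m \xrightarrow{\phi} A^n \to N \to 0$ and apply the two functors above to it. The functor $N \mapsto N \otimes_A \prod_j M_j$ is right exact, being a tensor functor. The functor $N \mapsto \prod_j(N \otimes_A M_j)$ is the composite of the right-exact functors $N \mapsto N \otimes_A M_j$ with the product functor $\prod_j$; since arbitrary products are exact in the category of $A$-modules (kernels and cokernels are computed componentwise, and a product of surjections is surjective), this composite is again right exact. Naturality of $\theta$ then produces a commutative ladder whose two rows are the resulting right-exact sequences ending in $N \otimes_A \prod_j M_j \to 0$ and $\prod_j(N \otimes_A M_j) \to 0$ respectively, and whose two leftmost vertical maps $\theta_{A^m}$ and $\theta_{A^n}$ are isomorphisms by the previous paragraph. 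Identifying $N \otimes_A \prod_j M_j$ and $\prod_j(N \otimes_A M_j)$ with the cokernels of the left-hand horizontal maps, and using that parallel isomorphic maps have canonically isomorphic cokernels (equivalently, the five lemma), I conclude that the induced map on cokernels, which is exactly $\theta_N$, is an isomorphism.

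I do not anticipate a serious obstacle here; the argument is routine homological algebra. The only two points deserving care are the exactness of arbitrary products in $A$-Mod, which is what guarantees the right-exactness of the target functor (and thereby makes the cokernel identification legitimate), and the compatibility of $\theta$ with the finite-direct-sum decompositions used in the free case. Both are standard, and the finite presentation hypothesis on $N$ is used precisely to reduce to finitely many free generators and relations so that these finite-direct-sum identities apply.
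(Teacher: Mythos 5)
Your proof is correct and follows essentially the same route as the paper: establish the isomorphism for finite free modules, then apply both functors to a finite presentation of $N$ and conclude via right-exactness of products (which preserve cokernels) and the five lemma. The extra attention you give to naturality of the comparison map is a welcome clarification but does not change the argument.
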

\begin{proof}
	If $N$ is finitely generated and free then $N \simeq A^n$ for some finite $n$ and then
	\begin{equation*}
		N \otimes \prod_j A/I_j = ( \prod_j A / I_j )^n
	\end{equation*}
	and
	\begin{equation*}
		\prod_j( N \otimes A/I_j) = \prod_j (A/I_j)^n,
	\end{equation*}
	giving the needed assertion.

	In general, $N$ admits a finite presentation
	\begin{equation*}
		A^r \to A^s \to N \to 0
	\end{equation*}
	and then we obtain a commutative diagram with exact rows:
	\begin{equation*}
		\xymatrix{
			A^r \otimes \prod A/I_j \ar[r] \ar[d]^\wr & A^s \otimes \prod A/I_j \ar[r] \ar[d]^\wr &  N \otimes \prod A/I_j \ar[r] \ar[d] & 0 \\
			\prod (A^r \otimes A/I_j) \ar[r] & \prod (A^s \otimes A/I_j) \ar[r] & \prod N \otimes A/I_j \ar[r] & 0
		}
	\end{equation*}
	The second row is exact because arbitrary products preserve cokernels.  We conclude by the 5-lemma that the rightmost vertical arrow is an isomorphism.
\end{proof}

\begin{lem}
\label{L:Ahat flat}
$\widehat A$ is flat over $R$.
\end{lem}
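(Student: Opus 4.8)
The plan is to prove the stronger and cleaner statement that \emph{any} product of $R$-flat modules is flat over $R$; indeed the only property of the factors $A/I_j$ that is relevant here is their flatness over $R$ (hypothesis (a)), while the coprimality (b) and schematic density (c) are needed only elsewhere in the proof of \Cref{P:FlatRedFibAff}. I would argue by induction on the length of the Artinian local ring $R$, via the small-extension form of the local flatness criterion, \Cref{L:flat criterion}(b). When $R$ is a field every module is flat and there is nothing to prove, so assume $R$ is not a field and choose a small extension $0 \to (\epsilon) \to R \to R_0 \to 0$ with $\mathfrak m_R \epsilon = 0$ and $R_0$ of strictly smaller length (take $\epsilon$ a nonzero element of the last nonvanishing power of $\mathfrak m_R$, so that $\operatorname{Ann}(\epsilon)=\mathfrak m_R$, as in \Cref{L:trivial tensor}). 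By \Cref{L:flat criterion}(b) it then suffices to verify: (i) $\widehat A \otimes_R R_0$ is flat over $R_0$; and (ii) the multiplication map $(\epsilon)\otimes_R \widehat A \to \widehat A$ is injective.

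For (i), the observation is that base change along $R\to R_0=R/(\epsilon)$ is just killing $\epsilon$, and that this operation commutes with the product: since the coordinates of an element of $\prod_j A/I_j$ are unconstrained, $\epsilon\cdot\prod_j(A/I_j)=\prod_j \epsilon(A/I_j)$, whence $\widehat A\otimes_R R_0\cong \prod_j\big((A/I_j)\otimes_R R_0\big)$. Each factor $(A/I_j)\otimes_R R_0$ is flat over $R_0$ by base change of a flat module, so this is again a product of flat modules over the smaller ring $R_0$, hence flat by the inductive hypothesis.

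For (ii), I would apply \Cref{L:trivial tensor} to the $R$-algebra $\widehat A$: it identifies $(\epsilon)\otimes_R \widehat A$ with $\widehat A/\mathfrak m_R\widehat A$ and reduces injectivity to the implication $\epsilon b = 0 \Rightarrow b\in \mathfrak m_R\widehat A$ for $b=(b_j)_j$. Applying \Cref{L:trivial tensor} instead to each algebra $A/I_j$, and using that $A/I_j$ is flat (so that $(\epsilon)\otimes_R(A/I_j)\to A/I_j$ is injective), gives $\epsilon b_j = 0 \Rightarrow b_j\in\mathfrak m_R(A/I_j)$ for every $j$. To pass from this coordinatewise conclusion to $b\in\mathfrak m_R\widehat A$, I would invoke that $R$ is Noetherian, so $\mathfrak m_R=(x_1,\dots,x_t)$ is finitely generated: writing each $b_j=\sum_{i=1}^t x_i b_j^{(i)}$ and assembling the coordinates into $b^{(i)}=(b_j^{(i)})_j\in\widehat A$ yields $b=\sum_i x_i b^{(i)}\in\mathfrak m_R\widehat A$.

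The only subtle point — and the sole place where finiteness of $R$ is essential rather than formal — is the last step of (ii): for an infinite product one has in general only $\mathfrak m_R\prod_j(A/I_j)\subseteq\prod_j\mathfrak m_R(A/I_j)$, and the reverse inclusion (needed to globalize the coordinatewise membership) holds precisely because $\mathfrak m_R$ is finitely generated, i.e.\ because the Noetherian ring $R$ is coherent. Everything else in the argument is formal manipulation of products together with the two flatness lemmas.
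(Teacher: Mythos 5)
Your proof is correct, but it takes a genuinely different route from the paper's. The paper applies the one-step form of the local flatness criterion, \Cref{L:flat criterion}(a): it suffices to show that $\mathfrak m_R \otimes_R \widehat A \to \widehat A$ is injective, and this follows by combining the injectivity of each $\mathfrak m_R \otimes_R (A/I_j) \to A/I_j$ (flatness of the factors) with the identification $\mathfrak m_R \otimes_R \prod_j (A/I_j) \cong \prod_j \bigl(\mathfrak m_R \otimes_R (A/I_j)\bigr)$ supplied by \Cref{L:3}, since products preserve injections. You instead induct on the length of $R$ via the small-extension criterion \Cref{L:flat criterion}(b), proving along the way the stronger statement that \emph{any} product of flat modules over an Artinian local ring is flat. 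Both arguments ultimately rest on the same fact --- that finite generation of $\mathfrak m_R$ (i.e.\ coherence of the Noetherian ring $R$) lets one commute $\mathfrak m_R \otimes_R(-)$, or membership in $\mathfrak m_R M$, past an infinite product; the paper packages this once and for all in \Cref{L:3}, while you carry it out by hand in the ``assemble the coordinates'' step of (ii), correctly flagging it as the one place where coherence is essential. Your base-change computation in (i) and the use of \Cref{L:trivial tensor} in (ii) are both sound. The trade-off: the paper's proof is shorter because \Cref{L:3} is already available, whereas yours isolates the general principle (a special case of Chase's theorem on products of flat modules over coherent rings) and makes no use of the coprimality or density hypotheses, which, as you note, are indeed irrelevant to this lemma.
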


\begin{proof}
By Lemma \ref{L:flat criterion}, we need to show that the multiplication map $\mathfrak m_R\otimes_R \widehat A \to \widehat A$ is an injection. 
Because each $A/I_j$ is flat over $R$, we have inclusions
\[
  \xymatrix{
    \mathfrak m_R \otimes_R A/I_j \ar@{^(->}[r] & A/I_j
  }\]
  Taking the product, we obtain a commutative square:
\[
  \xymatrix{
	  \mathfrak m_R \otimes \widehat A \ar[r] \ar[d]_\wr & \widehat A \ar@{=}[d] \\
	  \prod (\mathfrak m_R \otimes_R A/I_j) \ar[r] & \prod A/I_j
  }
\]
where the vertical arrow on the left is an isomorphism due to \Cref{L:3}. 
The bottom arrow is injective because products preserve injections, so the top arrow must also be an injection.
\end{proof}

We will apply the following lemma with $B = A$ and $C = \widehat A$.

\begin{lem}
  \label{L:A flat}
	Let $R$ be an Artinian local ring and let $B \to C$ be a homomorphism of $R$-algebras.  Assume that $B \to C$ is injective and $C$ is flat over $R$.  Then the following are equivalent:
	\begin{enumerate}[label=(\alph*)]
		\item $B$ is flat over $R$.
		\item $a : B / \mathfrak m_R B \to C / \mathfrak m_R C$ is injective. \label{L:Aflat-p2}
		\item 
			$
			\ker\left(B \to C / \mathfrak m_R C \right) = \mathfrak m_RB.
  $
	\end{enumerate}
\end{lem}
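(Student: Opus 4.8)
The plan is to split the three conditions into the purely formal equivalence $\mathrm{(b)}\Leftrightarrow\mathrm{(c)}$ and the two substantive implications $\mathrm{(a)}\Rightarrow\mathrm{(b)}$ and $\mathrm{(b)}\Rightarrow\mathrm{(a)}$, isolating at the outset the one genuinely delicate point: over an Artinian local ring a quotient of a flat module by a flat submodule is again flat, a statement that is \emph{false} over a general local ring (e.g.\ multiplication by a uniformizer with $B=C=R$ over a DVR $R$) and whose proof here must exploit the nilpotence of $\mathfrak m_R$. For $\mathrm{(b)}\Leftrightarrow\mathrm{(c)}$ I would simply unwind definitions: since $\iota(\mathfrak m_R B)\subseteq \mathfrak m_R C$, the map $B\to C/\mathfrak m_R C$ factors as $B\twoheadrightarrow B/\mathfrak m_R B\xrightarrow{a} C/\mathfrak m_R C$, so $\ker(B\to C/\mathfrak m_R C)$ is the preimage of $\ker(a)$ under $B\to B/\mathfrak m_R B$; this preimage equals $\mathfrak m_R B$ exactly when $\ker(a)=0$.

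The hard part is $\mathrm{(a)}\Rightarrow\mathrm{(b)}$, and I would prove it by a socle argument rather than a Tor computation. If $\mathfrak m_R=0$ then $R$ is a field and $a=\iota$ is injective, so assume $\mathfrak m_R\neq 0$ and choose $s\ge 1$ maximal with $\mathfrak m_R^{\,s}\neq 0$ (possible since $\mathfrak m_R$ is nilpotent). Fix a nonzero $\delta\in\mathfrak m_R^{\,s}$; then $\mathfrak m_R\delta\subseteq\mathfrak m_R^{\,s+1}=0$ together with $\delta\neq 0$ force $\mathrm{Ann}_R(\delta)=\mathfrak m_R$. Tensoring the exact sequence $0\to\mathfrak m_R\to R\xrightarrow{\cdot\delta} R$ with the flat module $B$ shows $\ker(\delta\colon B\to B)=\mathfrak m_R B$. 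Now if some $b\in B\setminus\mathfrak m_R B$ had $\iota(b)\in\mathfrak m_R C$, then $\delta\iota(b)\in\mathfrak m_R^{\,s+1}C=0$, whence $\iota(\delta b)=0$ and, $\iota$ being injective, $\delta b=0$; but then $b\in\ker(\delta|_B)=\mathfrak m_R B$, a contradiction. Thus $a$ is injective. The essential input is that $\delta$ annihilates $\mathfrak m_R C$, which is precisely where Artinianness is used; without it this implication genuinely fails.

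Finally, for $\mathrm{(b)}\Rightarrow\mathrm{(a)}$ I would pass to the cokernel $Q=C/B$ and use that $C$ is flat. The long exact Tor sequence of $0\to B\to C\to Q\to 0$ against $k=R/\mathfrak m_R$, together with $\mathrm{Tor}_1^R(k,C)=0$, identifies $\ker(a)$ with $\mathrm{Tor}_1^R(k,Q)$; hypothesis (b) then gives $\mathrm{Tor}_1^R(k,Q)=0$, i.e.\ $\mathfrak m_R\otimes_R Q\to Q$ is injective, so $Q$ is flat by the local flatness criterion \Cref{L:flat criterion}. Flatness of $C$ and $Q$ then yields flatness of $B$ (from $\mathrm{Tor}_2^R(k,Q)\to\mathrm{Tor}_1^R(k,B)\to\mathrm{Tor}_1^R(k,C)$ with both outer terms zero, and \Cref{L:flat criterion} again). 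Assembling the three parts gives the equivalence of (a), (b), and (c).
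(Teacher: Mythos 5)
Your proof is correct, but it takes a genuinely different route from the paper's. The paper proves (a)$\Leftrightarrow$(b) by induction on the length of $R$: it writes $R$ as a small extension of $R_0$ with kernel $(\epsilon)$, applies the two-step criterion of \Cref{L:flat criterion}(b), uses the injectivity of $B\to C$ and the flatness of $C$ to trade the injectivity of $(\epsilon)\otimes_R B \to B$ for that of $(\epsilon)\otimes_R B \to (\epsilon)\otimes_R C$, and identifies the latter with $a$ via $(\epsilon)\otimes_R B \cong B/\mathfrak m_R B$. You instead split the equivalence into two one-step arguments: for (a)$\Rightarrow$(b) you choose a socle element $\delta$ with $\operatorname{Ann}_R(\delta)=\mathfrak m_R$ and use flatness of $B$ to compute $\ker(\delta\colon B\to B)=\mathfrak m_R B$ (this direction in fact uses only the injectivity of $\iota$, not the flatness of $C$); for (b)$\Rightarrow$(a) you identify $\ker(a)$ with $\operatorname{Tor}_1^R(k, C/B)$ and deduce flatness first of the cokernel and then of $B$ from \Cref{L:flat criterion}(a). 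Both routes correctly isolate where the Artinian hypothesis enters, namely the nilpotence of $\mathfrak m_R$ (through the socle in one direction, through the nilpotent-ideal form of the local flatness criterion in the other). Your version avoids the induction entirely --- and with it the need to check that the hypotheses of the lemma, in particular the injectivity of $B\otimes_R R_0 \to C\otimes_R R_0$, persist down the tower, a point the paper's write-up passes over quickly --- at the cost of invoking the long exact sequence for $\operatorname{Tor}$; it also yields the slightly stronger conclusion that the quotient $C/B$ is flat whenever $B$ is.
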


\begin{proof}
	The third statement is a reformulation of the second.  We prove the equivalence of the first two.

	Suppose $R$ is a small extension of $R_0$ as in~\eqref{E:small}.  By \Cref{L:flat criterion}, $B$ is flat over $R$ if and only if $B \otimes_R R_0$ is flat over $R_0$ and the map $b$ in
\[
\xymatrix{
(\epsilon)\otimes_R B \ar[d]_c \ar[r]^-b & B \ar@{^(->}[d]\\
(\epsilon)\otimes_R C \ar@{^(->}[r] & C
}\]
	is injective.  Since $B \to C$ is injective and $(\epsilon) \otimes_R C \to C$ is injective, the injectivity of $b$ is equivalent to the injectivity of $c$.  But, using again that $(\epsilon)\cong R/\mathfrak m_R$, we have that  $(\epsilon) \otimes_R B \simeq B / \mathfrak m_R B$ and $(\epsilon) \otimes_R  C \simeq C / \mathfrak m_R C$, and so we see that $c$ corresponds to $a$ in assertion \eqref{L:Aflat-p2} of the lemma.

	Thus the flatness of $B$ over $R$ is equivalent to the flatness of $B \otimes_R R_0$ over $R_0$ and the injectivity of $a$.  But by induction, the flatness of $B \otimes_R R_0$ is equivalent to the injectivity of $a$.  Thus the flatness of $B$ over $R$ is equivalent to the injectivity of $a$.
\end{proof}

Since $A \to \widehat A$ is injective (because $\bigcap I_j = 0$) and $\widehat A$ is flat over $R$ (by \Cref{L:Ahat flat}), we conclude from \Cref{L:A flat} that $A$ is flat over $R$ if and only if $A/\mathfrak m_R A \to \widehat A / \mathfrak m_R \widehat A$ is injective.

\begin{lem}
\label{L:special reduced}
The special fiber $\widehat A / \mathfrak m_R \widehat A$ is reduced.
	If $A$ is flat over $R$ then $A / \mathfrak m_R A$ is also reduced.
\end{lem}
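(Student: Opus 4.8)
The plan is to prove the two assertions in turn, with the first supplying the essential input for the second. For the reducedness of $\widehat A/\mathfrak m_R\widehat A$, the idea is to identify this special fiber with the \emph{product} of the special fibers of the $A/I_j$, each of which is reduced by hypothesis. First I would record the identity $\widehat A/\mathfrak m_R\widehat A = \widehat A\otimes_A(A/\mathfrak m_R A)$, which holds because $(\mathfrak m_R A)\widehat A=\mathfrak m_R\widehat A$ (every $a_j w$ lies in $\widehat A$, so $\mathfrak m_R A\widehat A\subseteq\mathfrak m_R\widehat A$, and the reverse inclusion is clear from $\mathfrak m_R\subseteq\mathfrak m_R A$). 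I would then note that $A/\mathfrak m_R A$ is finitely presented as an $A$-module: since $R$ is Artinian, hence Noetherian, $\mathfrak m_R$ is a finitely generated ideal, so $\mathfrak m_R A$ is a finitely generated ideal of $A$ and $A/\mathfrak m_R A$ admits a finite presentation $A^r\to A\to A/\mathfrak m_R A\to 0$.

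With this in hand, I would apply \Cref{L:3} to the finitely presented $A$-module $N=A/\mathfrak m_R A$, writing $k=R/\mathfrak m_R$, to obtain
$$
\widehat A/\mathfrak m_R\widehat A=(A/\mathfrak m_R A)\otimes_A\prod_j A/I_j\;\cong\;\prod_j\bigl((A/\mathfrak m_R A)\otimes_A A/I_j\bigr)=\prod_j (A/I_j)\otimes_R k .
$$
Each factor $(A/I_j)\otimes_R k=A/(\mathfrak m_R A+I_j)$ is the special fiber of $A/I_j$, which is reduced by hypothesis~\ref{P:FlatRedFibAff-red}. Since an arbitrary product of reduced rings is reduced, it follows that $\widehat A/\mathfrak m_R\widehat A$ is reduced, which is the first assertion.

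For the second assertion I would invoke the flatness criterion recorded immediately before the lemma: by \Cref{L:A flat} applied with $B=A$ and $C=\widehat A$, using that $A\to\widehat A$ is injective and that $\widehat A$ is flat over $R$ (\Cref{L:Ahat flat}), the flatness of $A$ over $R$ is equivalent to the injectivity of the ring homomorphism $A/\mathfrak m_R A\to\widehat A/\mathfrak m_R\widehat A$. Hence, if $A$ is flat over $R$, then $A/\mathfrak m_R A$ embeds as a subring of the reduced ring $\widehat A/\mathfrak m_R\widehat A$; as a subring of a reduced ring is reduced, $A/\mathfrak m_R A$ is reduced.

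I expect the only genuine subtlety to be the interchange of the functor $(-)\otimes_R k$ with the infinite product defining $\widehat A$: this is precisely the content of \Cref{L:3}, and applying it correctly hinges on verifying that $A/\mathfrak m_R A$ is finitely presented over $A$, which is exactly where the Noetherianity of the Artinian base $R$ is used. Once that interchange is justified, both statements are formal, since reducedness passes to subrings and to arbitrary products.
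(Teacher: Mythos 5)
Your proof is correct and follows essentially the same route as the paper's: both identify $\widehat A/\mathfrak m_R\widehat A$ with $\prod_j A/(I_j+\mathfrak m_R A)$ via \Cref{L:3} applied to the finitely presented $A$-module $A/\mathfrak m_R A$, conclude reducedness from the hypothesis on each factor, and then deduce the second assertion from the injectivity of $A/\mathfrak m_R A\to\widehat A/\mathfrak m_R\widehat A$ supplied by \Cref{L:A flat}. Your explicit verification that $A/\mathfrak m_R A$ is finitely presented over $A$ (so that \Cref{L:3} applies) is a point the paper leaves implicit, but the argument is otherwise the same.
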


\begin{proof}
 	We start with the assertion that 
\begin{equation}\label{E:SpRed}
\widehat A / \mathfrak m_R \widehat A \cong \prod A/(I_j + \mathfrak m_R A).
\end{equation}
To establish this, our first observation is that $(A/I_j)\otimes_R(R/\mathfrak m_R)\cong A/(I_j+\mathfrak m_RA)$. To show this, recall
\ifHideFoot
\else 
\footnote{This is standard, but there is  a convenient reminder in Thm.~4.3 of Keith Conrad's note \url{https://kconrad.math.uconn.edu/blurbs/linmultialg/tensorprod.pdf}.}
\fi
  that 
if $B$ is a ring, and if $I$ and $J$ are ideals, then $B/(I+J) \cong (B/I) \otimes_B (B/J)$.
Then we have 
\begin{align*} \frac{A}{I_j + \mathfrak m_RA} &\cong \frac{A}{I_j} \otimes_A \frac{A}{\mathfrak m_RA} \\&\cong \frac{A}{I_j}\otimes_A(A\otimes_R \frac{R}{\mathfrak m_R})\\&\cong \frac{A}{I_j}\otimes_R\frac{R}{\mathfrak m_R}\\
& = (A/I_j)\otimes_R (R/\mathfrak  m_R).
\end{align*}
Taking products, we then have 
$$
 \prod A/(I_j + \mathfrak m_R A)\cong \prod ((A/I_j)\otimes_R (R/\mathfrak  m_R))\cong (\prod (A/I_j))\otimes_R (R/\mathfrak  m_R) \cong \widehat A/\mathfrak m_R\widehat A,
$$
where the second isomorphism comes from \Cref{L:3}.
	
	Having established \eqref{E:SpRed}, observe that 
	each of the factors in the product is reduced by hypothesis.  Therefore the product is reduced, and therefore $\widehat A $ is reduced.

	If in addition $A$ is flat over $R$, then, by \Cref{L:A flat}, the map
	\begin{equation*}
	\xymatrix{	A/ \mathfrak m_R A \ar[r]& \widehat A / \mathfrak m_R \widehat A}
	\end{equation*}
	is injective.  Since the target is reduced, this implies that $A / \mathfrak m_R A$ is reduced.
\end{proof}

This concludes the proof of Propositions~\ref{P:FlatRedFibAff} and~\ref{P:FlatRedFib}.

\subsection{Our earlier error}\label{S:error-expl}

We now take a moment to explain where an error arose in a previous version of this paper; this error led us to the \emph{erroneous} conclusion that the image of a morphism of abelian schemes was always an abelian scheme.

To understand the error, we first observe the following:

\begin{lem} \label{L:2}
  Let $B$ be a ring, and let $\st{{\mathfrak I}_1, \dots, {\mathfrak I}_N}$ be a collection of
  pairwise relatively prime ideals.  Then
  \begin{enumerate}[label=(\alph*)]
  \item ${\mathfrak I}_N$ is relatively prime to $\prod_{i<N}{\mathfrak I}_i$, and
  \item $\bigcap_{i\le N} {\mathfrak I}_i= \prod_{i\le N} {\mathfrak I}_i$.
  \end{enumerate}
  Moreover, for any ideal $\mathfrak J$ of $B$, 
\begin{equation}\label{E:PassTheCap}
    \bigcap_{i \le N} (\mathfrak J+{\mathfrak I}_i) = \mathfrak J + \bigcap_{i\le
      N} {\mathfrak I}_i.
  \end{equation}
             \end{lem}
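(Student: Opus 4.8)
The plan is to treat this as a sequence of reductions to the two-ideal comaximal case, whose standard building block is the identity $\mathfrak{a} \cap \mathfrak{b} = \mathfrak{a}\mathfrak{b}$ valid whenever $\mathfrak{a} + \mathfrak{b} = B$. Recall that ``relatively prime'' here means $\mathfrak{I}_i + \mathfrak{I}_j = (1)$ for $i \neq j$. I will prove (a), then bootstrap to (b) by induction, and finally deduce the displayed identity by applying (b) to the shifted ideals $\mathfrak{J} + \mathfrak{I}_i$.

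For part (a), I would use the usual expansion trick. For each $i < N$ choose $a_i \in \mathfrak{I}_N$ and $b_i \in \mathfrak{I}_i$ with $a_i + b_i = 1$, which is possible by comaximality, and multiply these relations to get $1 = \prod_{i<N}(a_i + b_i)$. Expanding, every summand other than $\prod_{i<N} b_i$ contains at least one factor $a_i \in \mathfrak{I}_N$ and hence lies in $\mathfrak{I}_N$, while the lone remaining summand $\prod_{i<N} b_i$ lies in $\prod_{i<N} \mathfrak{I}_i$. Thus $1 \in \mathfrak{I}_N + \prod_{i<N} \mathfrak{I}_i$, giving (a).

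For part (b), I would induct on $N$, the case $N = 1$ being trivial. In the inductive step set $\mathfrak{a} = \prod_{i<N} \mathfrak{I}_i$ and $\mathfrak{b} = \mathfrak{I}_N$; part (a) supplies $\mathfrak{a} + \mathfrak{b} = B$, so the two-ideal identity gives $\mathfrak{a} \cap \mathfrak{b} = \mathfrak{a}\mathfrak{b}$. Since the inductive hypothesis identifies $\mathfrak{a} = \bigcap_{i<N} \mathfrak{I}_i$, this reads $\bigcap_{i \leq N} \mathfrak{I}_i = \prod_{i \leq N} \mathfrak{I}_i$, completing the induction. (The two-ideal identity itself I would recall in one line: $\mathfrak{a}\mathfrak{b} \subseteq \mathfrak{a} \cap \mathfrak{b}$ always, and for $z \in \mathfrak{a} \cap \mathfrak{b}$, writing $1 = x + y$ with $x \in \mathfrak{a}$, $y \in \mathfrak{b}$ gives $z = zx + zy \in \mathfrak{a}\mathfrak{b}$.)

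For the final identity, the containment $\mathfrak{J} + \bigcap_i \mathfrak{I}_i \subseteq \bigcap_i(\mathfrak{J} + \mathfrak{I}_i)$ is immediate. For the reverse, I would observe that the ideals $\mathfrak{J} + \mathfrak{I}_i$ are again pairwise comaximal (each contains the comaximal $\mathfrak{I}_i$), so part (b) applied to \emph{them} yields $\bigcap_i(\mathfrak{J} + \mathfrak{I}_i) = \prod_i(\mathfrak{J} + \mathfrak{I}_i)$. Expanding this product, every term involving at least one copy of $\mathfrak{J}$ lands in $\mathfrak{J}$, and the unique remaining term is $\prod_i \mathfrak{I}_i$, so $\prod_i(\mathfrak{J} + \mathfrak{I}_i) \subseteq \mathfrak{J} + \prod_i \mathfrak{I}_i = \mathfrak{J} + \bigcap_i \mathfrak{I}_i$, the last equality by (b). The main obstacle is purely bookkeeping, namely keeping the expansions of products of comaximal relations straight, rather than any conceptual difficulty; as an alternative one could run the entire ``moreover'' statement by passing to $B/\mathfrak{J}$, noting the images of the $\mathfrak{I}_i$ stay comaximal, and applying (b) in the quotient.
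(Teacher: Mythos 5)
Your proof is correct and follows essentially the same route as the paper's: part (a) by expanding a product of comaximality relations, part (b) via the Chinese Remainder Theorem (which you prove by induction rather than cite), and the final identity by applying (b) to the comaximal ideals built from $\mathfrak J$. The only cosmetic difference is in the last step, where you apply (b) to the ideals $\mathfrak J + \mathfrak I_i$ inside $B$ and then expand the product $\prod_i(\mathfrak J + \mathfrak I_i)$, whereas the paper passes to $B/\mathfrak J$ and applies (b) to the images of the $\mathfrak I_i$ there --- the quotient-ring alternative you mention in your closing sentence is exactly what the paper does.
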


\begin{proof}
  For (a), 
it suffices to prove that if $\mathfrak I_N$ is relatively prime to each of $\mathfrak I_{N-1},\dots,\mathfrak I_1$, then $\mathfrak I_N$ is relatively prime to $\prod_{j=1}^{N-1}\mathfrak I_j$.
By induction (or easily generalizing the argument for a direct proof), 
   it suffices to settle the case $N=3$.  We have
  \begin{align*}
    1 &= j_3+ j_1 = j_3'+j_2
        \intertext{where $j_i, j_i' \in {\mathfrak I}_i$ and so}
        1^2 =1 &= (j_3+j_1)(j_3'+j_2) \in {\mathfrak I}_3 + {\mathfrak I}_1{\mathfrak I}_2.
  \end{align*}

  Part (b) is the Chinese Remainder Theorem.

	For Part (c), let $C$ be the quotient ring $B/\mathfrak J$ and let $\varphi : B \to C$ be the quotient homomorphism.  We have
	\begin{equation*}
		\begin{aligned}
			\bigcap_{i \leq N} (\mathfrak J + \mathfrak I_i)
			& = \varphi^{-1} \Bigl( \bigcap_{i \leq N} \varphi(\mathfrak I_i) C \Bigr) \\
			& = \varphi^{-1} \Bigl( \prod_{i \leq N} \varphi(\mathfrak I_i) C \Bigr) & \qquad & \parbox{4cm}{by (b), since the $\varphi(\mathfrak I_i) C$ are pairwise coprime} \\
			& = \varphi^{-1} \Bigl( \varphi \bigl( \prod_{i \leq N} \mathfrak I_i \bigr) C \Bigr) \\
			& = \mathfrak J + \prod_{i \leq N} \mathfrak I_i \\
			& = \mathfrak J + \bigcap_{i \leq N} \mathfrak I_i & & \text{by (b).}
		\end{aligned}
	\end{equation*}
\end{proof}

	The identity~\eqref{E:PassTheCap} may fail for infinite intersections, but we always have one containment,
\begin{equation}\label{E:PassTheCapInf}
    \bigcap_{i } (\mathfrak J +{\mathfrak I}_i) \supseteq \mathfrak J  + \bigcap_{i} {\mathfrak I}_i,
  \end{equation} 
  since $ \mathfrak J + \mathfrak I_i\supseteq \mathfrak J + \bigcap \mathfrak I_i $ for all $i$.

	Note that the left side of~\eqref{E:PassTheCapInf} is the kernel of the map 
	\begin{equation*}
		B \to \prod B / ( \mathfrak J + \mathfrak I_i ) 
	\end{equation*}
	while the right side is the sum of $\mathfrak J$ and the kernel of
	\begin{equation*}
		B \to \prod  B / \mathfrak I_i .
	\end{equation*}

	In an earlier version of this paper, 
	 in trying to prove  \Cref{P:FlatRedFibAff}, 
	we assumed incorrectly that \eqref{E:PassTheCapInf} was an equality when $B = A$, $\mathfrak I_i = I_i$, and $\mathfrak J = \mathfrak m_R A$.  If this were the case, then it would imply that the kernel of
	\begin{equation*}
		A \to \widehat A / \mathfrak m_R \widehat A = \prod A / ( \mathfrak m_R + I_i)
	\end{equation*}
(see \eqref{E:SpRed} for the equality above) coincides with the sum of $\mathfrak m_R A$ and the kernel of
	\begin{equation*}
		A \to \widehat A = \prod A / I_i .
	\end{equation*}
	But $A \to \widehat A$ is injective (since by assumption $\bigcap I_i=0$), so this says that
	\begin{equation*}
		\mathfrak m_R = \ker \bigl( A \to \widehat A / \mathfrak m_R \widehat A \bigr),
	\end{equation*}
	which by \Cref{L:A flat} is equivalent the flatness of $A$ over $R$.  
		However, we have seen that $A$ can fail to be flat over $R$ (since otherwise the image of any morphism of abelian schemes would be an abelian scheme; see the proof of \Cref{T:ImIsAbSchArt}), and therefore the inclusion~\eqref{E:PassTheCapInf} must sometimes be strict.
	
	In other words, assuming that the containment \eqref{E:PassTheCapInf} is always an equality leads to the \emph{erroneous} conclusion that in \Cref{P:FlatRedFibAff}, the conditions \ref{P:FlatRedFibAff-red}, \ref{P:FlatRedFibAff-cop}, and \ref{P:FlatRedFibAff-dense} \emph{imply} that $A$ is flat.  This in turn leads to the \emph{erroneous} conclusion that in \Cref{P:FlatRedFib}, the conditions \ref{P:FlatRedFibAff-red}, \ref{P:FlatRedFibAff-cop}, and \ref{P:FlatRedFibAff-dense}  imply that $Z$ is flat over $S$; from there the proof of \Cref{T:ImIsAbSchArt} goes through as written, but without the necessary hypothesis that $f(X)$ be flat over $S$, leading to the \emph{erroneous} conclusion that the image of any homomorphism of abelian schemes is an abelian scheme.

 \bibliographystyle{amsalpha}
 \bibliography{DCG}

\end{document}